\documentclass[a4paper,reqno,10pt]{article}

\usepackage{amsmath}
\usepackage{amssymb,mathrsfs}
\usepackage{bm}
\IfFileExists{bbm.sty}{\usepackage{bbm}}{} 
\usepackage{graphicx}
\usepackage{xcolor}
\usepackage{amsfonts}
\usepackage{amsthm}
\usepackage{enumerate,enumitem}
\newtheorem{definition}{Definition}[section]
\newtheorem{theorem}[definition]{Theorem}
\newtheorem{proposition}[definition]{Proposition}
\newtheorem{remark}[definition]{Remark}

\newtheorem{lemma}[definition]{Lemma}
\newtheorem{example}[definition]{Example}
\newtheorem{problem}[definition]{Problem}

\usepackage{indentfirst}
\usepackage{multirow}
\numberwithin{equation}{section}

\usepackage[margin=1in]{geometry}
\usepackage[english]{babel}
\usepackage[T1]{fontenc}
\usepackage{times}

\date{}
\title{\bf Boundary Control of Two-dimensional Point-Vortex Systems}
\author{Ka\"{i}s Ammari\footnote{Universit\'e de Monastir, Facult\'e des Sciences de Monastir, D\'epartement de Math\'ematiques, 5019 Monastir, Tunisia. \textit{Email:} \texttt{kais.ammari@fsm.rnu.tn}.}
\quad Taoufik Hmidi\footnote{New York University Abu Dhabi, Abu Dhabi, United Arab Emirates. \textit{Email:} \texttt{th2644@nyu.edu}.}
\quad Haocheng Yang\footnote{New York University Abu Dhabi, Abu Dhabi, United Arab Emirates. \textit{Email:} \texttt{hy3448@nyu.edu}.}}

\usepackage[citestyle=numeric,maxbibnames=9, maxcitenames=9,doi=false,isbn=false,url=false,giveninits=true,eprint=false,bibstyle=numeric]{biblatex}
\renewbibmacro{in:}{}
\AtEveryBibitem{%
\clearlist{language}%
}
\usepackage{csquotes}

\usepackage{tikz}
\tikzset{every picture/.style={line width=0.75pt}} 
\usepackage{caption}
\usepackage[colorlinks,linkcolor=blue,citecolor=red,urlcolor=black]{hyperref}

\usepackage{verbatim} 

\begin{document}

\pagenumbering{arabic}

\newcommand{\Op}[2][]{\operatorname{Op}^{#1}\left(#2\right)}
\newcommand{\Supp}[1]{\operatorname{Supp} #1}
\newcommand{\RE}{\operatorname{Re}}
\newcommand{\IM}{\operatorname{Im}}
\newcommand{\Id}{\mathrm{Id}}
\newcommand{\diver}{\operatorname{div}}
\newcommand{\curl}{\operatorname{curl}}
\newcommand{\tr}[1]{\operatorname{tr}\left(#1\right)}
\newcommand{\di}{\operatorname{d} \!}
\newcommand{\dist}{\operatorname{dist}}

\newcommand{\xR}{\mathbb{R}}
\newcommand{\xC}{\mathbb{C}}
\newcommand{\xN}{\mathbb{N}}
\newcommand{\xT}{\mathbb{T}}
\newcommand{\xZ}{\mathbb{Z}}
\newcommand{\xD}{\mathbb{D}}
\newcommand{\xS}{\mathbb{S}}

\newcommand{\M}{\mathcal{M}}

\renewcommand{\le}{\leqslant}
\renewcommand{\ge}{\geqslant}
\renewcommand{\leq}{\leqslant}
\renewcommand{\geq}{\geqslant}

\newcommand{\GL}{\mathrm{GL}}
\newcommand{\PV}{\mathrm{PV}}
\newcommand{\cor}{\mathrm{cor}}
\newcommand{\main}{\mathrm{main}}
\newcommand{\bdy}{\mathrm{bdy}}
\newcommand{\Dir}{\mathrm{D}}
\newcommand{\Neu}{\mathrm{N}}
\newcommand{\Tr}{\mathrm{Tr}}
\newcommand{\ext}{\mathrm{ext}}
\newcommand{\per}{\mathrm{per}}
\newcommand{\loc}{\mathrm{loc}}
\newcommand{\refer}{\mathrm{ref}}
\newcommand{\initial}{\mathrm{in}}
\newcommand{\final}{\mathrm{en}}
\newcommand{\app}{\mathrm{app}}

\numberwithin{equation}{section}
\pagestyle{plain}

\def\la{\left\lvert}
\def\lA{\left\lVert}
\def\ra{\right\rvert}
\def\rA{\right\rVert}

\maketitle

\begin{abstract}
    In this article, we investigate the boundary controllability of finite point-vortex systems in bounded multiply connected planar Lipschitz domains. The controls are applied locally on an arbitrary nonempty relatively open subset of the boundary. Zero incoming vorticity is assumed at the inflow part of the boundary. For time-dependent controls, we prove exact trajectory controllability along arbitrary prescribed collision-free vortex paths remaining away from the boundary. For stationary controls, we prove exact endpoint controllability associated with mutually disjoint real-analytic arcs. The proofs rely on harmonic interpolation, localized Runge approximation, and an implicit-function argument. The optimal control minimizing the length of trajectories is also studied. We prove the nonexistence of minimizers within classes of slightly smooth controls, and establish existence when the control time and the size of the control are penalized.
\end{abstract}

\medskip

\noindent
{\bf Keywords:} Point vortex system; two-dimensional Euler flows; boundary control; harmonic control, Runge approximation; optimal control.

\medskip
\noindent
{\bf MSC 2020:} 76B47, 76B75, 93C20

\tableofcontents

\section{Introduction}\label{sect:Intro}

The dynamics of point vortices in two-dimensional incompressible inviscid fluids constitute one of the classical finite-dimensional models of fluid mechanics, dating back to the pioneering works of Helmholtz~\cite{Helmholtz1858} and Kirchhoff \cite{Kirchhoff1876}. In this description, the vorticity is concentrated at finitely many moving points,
$$
    \omega(t)=\sum_{j=1}^N \kappa_j\delta_{X_j(t)},
$$
where $\kappa_j\neq0$ is the circulation carried by the $j$-th vortex and $X_j(t)$ denotes its position. After removing the singular self-interaction, the Euler dynamics reduce to a finite-dimensional Hamiltonian system for the vortex positions. In a bounded domain, this system is determined by the Green function of the Laplacian and its regular part, together with additional circulation terms when the domain is multiply connected. The point-vortex model therefore retains several essential geometric features of the Euler dynamics while replacing the evolution of an infinite-dimensional vorticity field by a system of ordinary differential equations. It has consequently played an important role both in the mathematical study of vortex dynamics and as a reduced-order model for fluid motion; see, for instance,~\cite{Newton2001}.

\medskip

The purpose of this paper is to study the controllability of this system
when the control is applied exclusively on the boundary of the
fluid domain. More precisely, we ask whether a finite collection of point
vortices inside a bounded planar domain can be steered between prescribed
configurations, or even forced to follow prescribed trajectories, by
acting only on an arbitrarily small nonempty relatively open portion of
the boundary.
The control mechanism considered here can be viewed as control on harmonic background flow, taking the form 
$$
    u_{\rm c}=\nabla\Phi, \qquad \Delta\Phi=0\quad\hbox{in }\Omega,
$$
where the harmonic potential $\Phi$ is generated by boundary data supported in a prescribed relatively open subset
$\Sigma\subset\partial\Omega$. Both Dirichlet and Neumann conditions are considered. The basic mechanism is illustrated in Figure~\ref{fig:vortex}.

\medskip

\begin{figure}[htbp]
\centering
\begin{tikzpicture}[scale=1.2, font=\small, >=stealth]
    \draw[thick, fill=blue!3] (-4.25,-0.3)
        .. controls (-4.35,0.25) and (-4.2,0.7) .. (-3.85,1.05)
        .. controls (-3.5,1.8) and (-2.3,2.15) .. (-1.3,1.95)
        .. controls (0.2,2.25) and (1.8,2.1) .. (2.75,1.65)
        .. controls (4.1,1.55) and (4.55,0.55) .. (4.1,-0.6)
        .. controls (3.9,-1.75) and (2.25,-2.0) .. (0.65,-1.65)
        .. controls (-1.1,-2.3) and (-3.9,-1.9) .. (-4.25,-0.3);
    \draw[thick, fill=white] (-0.65,-0.15) ellipse (0.8 and 0.5);
    \draw[thick, fill=white] (1.85,-0.8) ellipse (0.6 and 0.4);
    \node at (3.3,-0.7) {$\Omega$};

    \draw[orange!85!black, line width=2pt] (-4.25,-0.3)
        .. controls (-4.35,0.25) and (-4.2,0.7) .. (-3.85,1.05);
    \node[orange!85!black] at (-4.65,0.4) {$\Sigma$};
    \node[align=center] at (-2.75,-1.05)
        {$\Delta\Phi=0$\\$u_{\rm c}=\nabla\Phi$};

    \draw[blue!65!black, dashed, thick] (-2.5,0.65)
        .. controls (-1.2,1.6) and (1.05,1.65) .. (2.75,0.45);
    \fill (-2.5,0.65) circle (1.8pt)
        node[below left] {$X_j^\initial$};
    \draw[fill=white] (2.75,0.45) circle (2.2pt)
        node[right] {$X_j^\final$};
    \coordinate (vortex) at (-0.025,1.35625);
    \draw[->, orange!85!black, thick] (vortex) -- (0.8,0.7);
    \node[orange!85!black, below] at (1.45,0.3)
        {$\nabla\Phi(t,X_j(t))$};
    \fill (vortex) circle (2.5pt)
        node[above=4pt] {$X_j(t)$};
\end{tikzpicture}
\caption{Schematic representation of localized boundary control of
point vortices. Boundary data supported on
$\Sigma\subset\partial\Omega$ generate a harmonic velocity field in
$\Omega$, which modifies the vortex dynamics through its value at the
vortex position.}
\label{fig:vortex}
\end{figure}

Here, we need to clarify the relation between the finite-dimensional model considered here and the Euler equation with inflow and outflow. Since a Neumann boundary control may have a nonzero normal component, prescribing the normal velocity alone is not sufficient to determine a solution of the full vorticity equation: the vorticity entering the domain through the inflow boundary must also be specified. For the full Euler equation, the well-posedness is only proved for source-sink case~\cite{sueur2022existence,
gie2025euler,sueur2023uniqueness}, where the vorticity is allowed to flow either inward or outward on each component of the boundary; or the analytical case~\cite{kukavica2025inflow-outflow}, where the uniqueness comes from the analytic assumption. In the present paper, we remain throughout in the point-vortex class and impose zero incoming vorticity. Whenever the normal trace $g=u\cdot n$ admits a pointwise representative, this corresponds to
\[
       \omega_{\rm in}=0
       \qquad\hbox{on }\{g<0\}.
\]
Thus, the controlled dynamics form a self-contained ordinary differential equation on the collision-free configuration space. 

\medskip

The singularity of this self-contained system occurs only when two vortices approach one another or when a vortex approaches the boundary. The reference trajectories in our controllability results are therefore required to remain uniformly separated from collisions and from $\partial\Omega$. For comparison, in the uncontrolled point-vortex system, global existence for almost every initial configuration is known under stronger regularity assumptions on the boundary; see~\cite{donati2022point}.

\medskip

This problem is related to, but distinct from, several classical directions
in the control of incompressible fluids. Boundary controllability of the
Euler equations was initiated by Coron
\cite{coron1996controllability}, who established fundamental
controllability results for two-dimensional incompressible perfect
fluids. In this theory, the topology of the domain plays an important
role. When the fluid region is multiply connected, Kelvin circulation
invariants impose constraints on the reachable Eulerian states. Glass
subsequently refined the planar theory in multiply connected domains
\cite{glass2001addendum} and established exact boundary controllability
for the three-dimensional Euler equation \cite{glass2000exact}. These
works concern controllability at the level of the Eulerian fluid state,
rather than the direct steering of a finite system of singular vortices.
A related but conceptually different problem is Lagrangian
controllability, where one seeks to control trajectories of fluid
particles or the motion of material sets without prescribing the complete
Eulerian state. Glass and Horsin
\cite{glassHorsin2010lagrangian} established approximate Lagrangian
controllability for the two-dimensional Euler equation and applied it to
the approximate control of the shape of vortex patches. This point of
view is closer in spirit to the present paper because the object being
controlled is a distinguished geometric component of the flow rather
than the entire velocity field. There is nevertheless an important
difference: a point vortex is not a passive fluid particle. Its velocity
contains the interactions with all the other vortices, as well as the
regular self-interaction induced by the boundary. Controlling point
vortices therefore amounts to steering a nonlinear Hamiltonian system
associated with singular vorticity.

\medskip

Control problems formulated directly at the level of finite point-vortex
systems have also attracted attention. Dorsz and Glass
\cite{DG2023control} proved exact controllability in arbitrary positive
time by prescribing the motion of one additional point vortex, which acts
as a control for the remaining vortices. In a different direction,
motivated in particular by reduced-order flow control, Nelson, Protas and
Sakajo \cite{nelsonProtasSakajo2017} studied controllability and
observability of the linearized dynamics near point-vortex equilibria and
constructed feedback controls based on localized blowing and suction.
More generally, we refer to \cite{vainchteinMezic2006} for an overview of
vortex-based control algorithms and related reduced-order approaches.

\medskip

The control mechanism considered in the present paper differs from
these approaches. No additional vortex is introduced as a control, and
the vortices are not controlled directly. Instead, the control is the
harmonic velocity field generated by Dirichlet or Neumann data supported
in a prescribed boundary patch. The control is therefore localized at
the physical boundary but acts nonlocally in the interior through
harmonic propagation. Moreover, our principal results concern exact
nonlinear steering: time-dependent boundary controls yield exact
trajectory controllability, whereas stationary controls yield exact
endpoint controllability near prescribed analytic arcs.

\medskip

Throughout the paper, the fluid domain satisfies
\begin{equation}\tag{H}\label{hyp:Domain}
    \begin{gathered}
        \Omega\subset\xR^2 \text{ is a bounded connected Lipschitz domain, whose boundary has}\\
        \text{$(m+1)$ connected components } \partial\Omega=\Sigma_0\sqcup\Sigma_1\sqcup\cdots\sqcup\Sigma_m,
        \quad (m\in\xN)
    \end{gathered}
\end{equation}
where $\Sigma_0$ is the exterior boundary component and
$\Sigma_1,\ldots,\Sigma_m$ surround the holes. When $m=0$, the domain is
simply connected. 
\begin{figure}[htbp]
\centering
\begin{tikzpicture}[scale=1, font=\small]
    \draw[thick, fill=blue!3] (-4.2,-0.15)
        .. controls (-4.4,0.9) and (-3.4,1.9) .. (-1.2,1.7)
        -- (0,1.85) -- (0.8,0.95) -- (1.7,1.75)
        .. controls (3.2,2.0) and (4.4,1.1) .. (4.2,-0.25)
        .. controls (4.1,-1.75) and (2.15,-1.95) .. (0.65,-1.65)
        .. controls (-1.1,-2.3) and (-3.9,-1.9) .. (-4.2,-0.15);
    \draw[thick, fill=white] (-1.7,-0.15) ellipse (0.8 and 0.55);
    \draw[thick, fill=white] (2.1,-0.4) ellipse (0.65 and 0.6);
    \node at (-2.9,1.9) {$\Sigma_0$};
    \node at (-1.7,0.7) {$\Sigma_1$};
    \node at (2.1,0.5) {$\Sigma_2$};
    \node at (0,-0.95) {$\Omega$};

    \begin{scope}[shift={(-1.7,-0.15)}]
        \draw[orange!85!black, line width=2pt]
            ({0.8*cos(210)},{0.55*sin(210)})
            arc[start angle=210,end angle=285,x radius=0.8,y radius=0.55];
    \end{scope}
    \node[orange!85!black] at (-2.05,-1.0) {$\Sigma$};
\end{tikzpicture}
\caption{Geometry of hypothesis~\eqref{hyp:Domain}, illustrated for $m=2$.
The fluid region $\Omega$ is bounded by the exterior component $\Sigma_0$
and the hole boundaries $\Sigma_1,\Sigma_2$. The control set $\Sigma$ may
have nonempty intersection with any boundary component.}
\label{fig:domain}
\end{figure}

In a multiply connected domain, prescribing the
vorticity and the normal component of the velocity does not uniquely
determine the velocity field: one must in addition prescribe the
circulations around the holes. These degrees of freedom are represented
by a finite-dimensional space of divergence-free and curl-free harmonic
vector fields. We therefore fix once and for all the circulations
\[
       \mathcal C_\ell
       =\int_{\Sigma_\ell}u\cdot\tau\,d\sigma,
       \qquad \ell=1,\ldots,m.
\]
Once this circulation class is fixed, its contribution to the
point-vortex equations is a smooth position-dependent vector field on the
collision-free configuration space and can be incorporated into the
uncontrolled drift. Thus, the topology affects the circulation
constraints and the existence of global harmonic conjugates, but it does
not obstruct the localized harmonic interpolation and approximation
mechanisms used in our controllability arguments. In the simply connected
case $m=0$, these circulation terms are absent.

\medskip

The first main contribution of this paper is an exact trajectory
controllability result for time-dependent boundary controls. Let
$\Sigma\subset\partial\Omega$ be an arbitrary nonempty relatively open
set. Given prescribed smooth trajectories of the $N$ vortices that remain
mutually distinct and at positive distance from the boundary, we construct
boundary data supported in $\Sigma$ for which these trajectories are exact
solutions of the controlled point-vortex system. The result holds for both
Dirichlet controls and compatible Neumann controls. Its main elliptic
ingredient is a finite-point harmonic interpolation result, based on
boundary unique continuation, which allows one to prescribe the gradient
of a harmonic potential simultaneously at the vortex positions.

\medskip

The second main contribution concerns stationary boundary controls, for
which the same harmonic potential must govern the motion throughout the
whole time interval. Given mutually disjoint real-analytic arcs joining
the initial and final vortex positions, we prove exact endpoint
controllability by a stationary boundary control, with the corresponding
trajectories converging, after re-parametrization, to the prescribed arcs
in every $C^k$ norm. The proof combines local harmonic realization near
the arcs, localized Runge approximation, and an implicit-function argument that corrects the remaining endpoint error.
These two results rely on different mechanisms. Time-dependent control
allows the vortex velocities to be prescribed instantaneously, whereas
stationary control requires a global approximation-and-correction
procedure. In both cases, the control region may be an arbitrarily small
nonempty relatively open subset of the boundary. In particular, for the
point-vortex system with the circulation class fixed, the control region
need not intersect every connected component of $\partial\Omega$.

\medskip

Beyond controllability, we investigate the geometry and optimization of
controlled vortex trajectories. For a single vortex, we consider the
infimum of the lengths of trajectories generated by stationary localized
boundary controls and prove that it coincides with the intrinsic distance
of the domain. Thus, despite the rigidity imposed by harmonic and
stationary control, controlled trajectories can have lengths
arbitrarily close to the shortest geometric length permitted by the
domain. In convex domains with real-analytic boundary, a further rigidity phenomenon occurs. Although the intrinsic distance between two points coincides with their Euclidean distance, the corresponding minimizing trajectory---the straight segment joining the endpoints---cannot be realized by any regular stationary Dirichlet or Neumann control. Thus, the optimal length can be approached arbitrarily closely by admissible controlled trajectories, but it is not attained unless the control is singular enough. The underlying obstruction is Hamiltonian. An exactly length-minimizing
trajectory would have to be the straight segment joining its endpoints,
whereas every stationary one-vortex trajectory lies on a level set of the
controlled Kirchhoff--Routh Hamiltonian. The boundary behavior of the
Robin function rules out such a level set containing a nontrivial
segment. We also introduce a regularized
optimal-control problem and prove the existence of minimizers when the control time and the size of control function are penalized.

\subsection{Formulation of the system}
Let $\Omega$ be as above (see the hypothesis~\eqref{hyp:Domain}). We consider the motion of an incompressible perfect fluid in $\Omega$:
\begin{equation}\label{eq:Euler}
    \left\{\begin{aligned}
        &\partial_t u + (u \cdot \nabla) u + \nabla p = 0, \quad &&\text{in } \Omega,\\
        &\diver u = 0, \quad &&\text{in } \Omega.
    \end{aligned}\right.
\end{equation}
In 2D, the vorticity $\omega = \curl u = \partial_x u^y - \partial_y u^x$ (where $u=(u^x,u^y)$) is governed by the active scalar equation
\begin{equation}\label{eq:Euler_Vor}
    \partial_t \omega + (u\cdot\nabla)\omega = 0,
\end{equation}
where the velocity field $u$ can be recovered through the Biot-Savart law and the Helmholtz decomposition,
\begin{equation}\label{eq:BSLaw}
    u(t,X) = \int_{\Omega} \nabla_X^\perp G_{\Omega}(X,Y) \omega(t,Y) \di Y + \sum_{r=1}^m c_r(t)\nabla^\perp W_r(X) + \nabla \Phi(t,X).
\end{equation}
Here $G_\Omega(X,Y)$ is the Dirichlet Green function of the Laplacian in $\Omega$, namely,
$$
    \Delta_XG_\Omega(X,Y)=\delta_Y(X),\qquad
    G_\Omega(\cdot,Y)|_{\partial\Omega}=0,
$$
and $\Phi$ is a single-valued harmonic velocity potential determined by the boundary control. $W_r$'s ($r=0,1,...,m$) are harmonic measures determined by
\begin{equation}\label{eq:HarMes}
    \Delta W_r = 0, \text{ in $\Omega$}; \quad W_r|_{\Sigma_r} =1, \quad W_r|_{\partial\Omega\backslash\Sigma_r} =0,
\end{equation}
and $c_r(t)$ are real coefficients depending on time. We also used the notation
$$
    X=(x,y),\qquad
    \nabla^\perp = \nabla_X^\perp=(-\partial_y,\partial_x)^T.
$$
We denote by $n$ the almost everywhere defined outward unit normal, set $\tau=n^\perp$, and write $\di\sigma$ for the boundary measure. Then we prescribe the circulations around each hole,
$$
    \mathscr C_l:=\int_{\Sigma_l}u\cdot\tau\di\sigma, \qquad l=1,...,m,
$$
where the integral is understood in the weak tangential-trace sense, and set
$$
    \mathsf A_{lr}:=\left\langle n\cdot\nabla W_r,W_l|_{\partial\Omega}\right\rangle_{H^{-1/2}(\partial\Omega),H^{1/2}(\partial\Omega)} = \int_{\Sigma_l}n\cdot\nabla W_r\di\sigma, \qquad 1\leq l,r\leq m.
$$
Note that the last equality holds whenever the conormal derivative is integrable. Since $\mathsf A$ is symmetric positive definite, and hence invertible,
the coefficients $c_r(t)$ are uniquely determined by the vorticity
$\omega$, the harmonic measures $W_r$, and the prescribed circulations
$\mathscr C_r$. More precisely,
\[
    (c_r(t))_{r=1}^m
    =
    \mathsf A^{-1}
    \left(
        \mathscr C_l
        -\int_\Omega W_l(Y)\omega(t,Y)\di Y
    \right)_{l=1}^m.
\]
Indeed, this identity follows by taking the circulation of
\eqref{eq:BSLaw} along each inner boundary component $\Sigma_l$,
observing that the gradient term has zero circulation, and applying
Green's identity to the Biot--Savart contribution. Thus, in a multiply connected domain $\Omega$, with circulations $\mathscr{C}_r$'s fixed, the vorticity $\omega$ and harmonic background flow $\nabla\Phi$ uniquely determine the velocity $u$. The simply connected case $m=0$ is recovered by omitting the sums above.

\medskip

Let us formulate the point vortex problem. Assume that the vorticity $\omega$ is a linear combination of $N$ point vortices for all time:
\begin{equation}\label{eq:vorticity}
    \omega(t,X) = \sum_{j=1}^N \kappa_j \delta_{X_j(t)}(X) = \sum_{j=1}^N \kappa_j \delta_0(X - X_j(t)),
\end{equation}
where the nonzero real number $\kappa_j \in \xR$ is the intensity of the $j$-th point vortex and $X_j(t)$ is its position at time $t$. To pass to the evolution of $X_j$'s, we recall the singular decomposition of Green function $G_\Omega(X,Y)$,
$$
    G_\Omega(X,Y) = G_{\xR^2}(X,Y) + \tilde{R}_{\Omega}(X,Y) = \tfrac{1}{2\pi} \ln |X - Y| + \tilde{R}_\Omega(X,Y),
$$
where $\tilde{R}_\Omega(X,Y)$ is a symmetric real-analytic function on $\Omega\times\Omega$, known as the regular part of the Green function $G_\Omega$. Set
$$
    R_\Omega(X):=\tilde{R}_\Omega(X,X).
$$
For the vorticity~\eqref{eq:vorticity}, with $\bm X=(X_1,...,X_N)$, we write
$$
    (c_r(\bm X))_{r=1}^m
    :=\mathsf A^{-1}\left(\mathscr C_l-\sum_{k=1}^N\kappa_kW_l(X_k)\right)_{l=1}^m, \qquad \bm X \in \mathcal{C}_N(\Omega),
$$
where $\mathcal{C}_N(\Omega)$ denotes the collision-free configuration space,
\begin{equation}\label{eq:ConfigSet}
    \mathcal{C}_N(\Omega) := \left\{ \bm X=(X_1,\ldots,X_N)\in\Omega^N: X_i\neq X_j\ \text{for }i\neq j \right\}.
\end{equation}
Then the motion of the point vortices is governed by the Kirchhoff-Routh equation:
\begin{equation}\label{eq:KR}
    \begin{aligned}
        \dot{X}_j(t) = &\sum_{k \neq j} \kappa_k \nabla_X^\perp G_\Omega(X_j(t), X_k(t))
        + \frac{\kappa_j}{2} \nabla^\perp R_\Omega(X_j(t))\\
        &\hspace{6em}+ \sum_{r=1}^m c_r(\bm X(t))\nabla^\perp W_r(X_j(t)) + \nabla\Phi(t,X_j(t)).
    \end{aligned}
\end{equation}
The point vortex system~\eqref{eq:KR} can be derived as a limit system of~\eqref{eq:Euler_Vor}-\eqref{eq:BSLaw} when the vorticity concentrates at several points; see the derivation in Marchioro's paper~\cite{marchioro1988euler}. One may also refer to the works of Glass, Lacave, Munnier, and Sueur~\cite{glass2016body,glass2018limit} for the derivation of~\eqref{eq:KR} as the limit of small solids in an incompressible perfect fluid.
\begin{remark}
    If the velocity field $u$ has nontrivial boundary condition, the vorticity form of Euler equation~\eqref{eq:Euler_Vor}-\eqref{eq:BSLaw} requires the information of incoming vorticity,
    $$
        \omega_{\rm in} := \omega\text{ restricted to }\{(t,Y)\in[0,+\infty[\times\partial\Omega:(n\cdot u)(t,Y)<0\}
    $$
    to obtain the uniqueness (see, for instance, the paper of Noisette and Sueur~\cite{sueur2023uniqueness}). This fact does not contradict the self-contained system~\eqref{eq:KR} above. Indeed, we have assumed a priori that the vorticity $\omega$ is composed of the $N$ point vortices above for all time. This suggests the following zero-inflow condition,
    \begin{equation}\label{eq:ZeroInflow}
        \omega_{\rm in}=0
    \end{equation}
    Thus, the rigorous derivation of~\eqref{eq:KR} should follow this condition.
\end{remark}

\begin{remark}\label{rmk:Hamilton}
    It is worth mentioning that the point vortex system without background flow $\nabla\Phi$ is Hamiltonian,
    $$
        \kappa_j\dot{X}_j = \nabla^\perp_{X_j} \mathcal{H}_0(\bm X),
    $$
    where the Hamiltonian reads
    $$
        \mathcal{H}_0(\bm X) = \frac{1}{2} \sum_{j\neq k} \kappa_j \kappa_k G_\Omega(X_j,X_k) + \frac{1}{2} \sum_{j=1}^N\kappa_j^2 R_\Omega(X_j) - \frac{1}{2} \bm b(\bm X)^T \mathsf{A}^{-1} \bm b(\bm X),
    $$
    with 
    $$
        \bm b (\bm X)  =\left(\mathscr C_l-\sum_{k=1}^N\kappa_kW_l(X_k)\right)_{l=1}^m.
    $$
    If the background flow is nonzero, this system is only locally Hamiltonian since the harmonic conjugate of $\Phi$ can only be defined locally. However, when the background potential $\Phi$ is time-independent and admits a single-valued harmonic conjugate $\Psi$ defined on $\Omega$, the system~\eqref{eq:KR} becomes Hamiltonian with Hamiltonian function
    $$
        \mathcal{H}(\bm X) := \mathcal{H}_0(\bm X) + \sum_{j=1}^N \kappa_j \Psi(X_j).
    $$
\end{remark}

\subsection{Boundary control}

In this paper, we are interested in the boundary controllability of the point vortex system~\eqref{eq:KR}. More precisely, we want to know whether it is possible to control the motion of the point vortices from an initial configuration to a final configuration through some control $f$ on Dirichlet boundary condition
\begin{equation}\label{eq:CtrlD}
    \left\{\begin{aligned}
        & \Delta \Phi = 0, \quad &&\text{in } \Omega,\\
        &\Phi|_{\partial\Omega} = f,
    \end{aligned}\right.
\end{equation}
or some control $g$ on Neumann boundary condition
\begin{equation}\label{eq:CtrlN}
    \left\{\begin{aligned}
        & \Delta \Phi = 0, \quad &&\text{in } \Omega,\\
        &n\cdot\nabla\Phi|_{\partial\Omega} = g,
    \end{aligned}\right.
\end{equation}
with a compatibility condition
\begin{equation}\label{eq:CtrlN_Comp}
    \int_{\partial\Omega} g \di \sigma = 0.
\end{equation}
Since the Biot-Savart and harmonic-circulation parts are tangent to the boundary, the function $g$ coincides with the normal velocity $n\cdot u|_{\partial\Omega}$. 

\begin{remark}\label{rmk:DtN-localization}
    At the natural trace level, these controls are equivalent up to constants. More precisely, the Dirichlet-to-Neumann operator
    \begin{equation}\label{eq:DtN}
        \mathcal{N}f := n\cdot\nabla\Phi|_{\partial\Omega}
    \end{equation}
    induces an isomorphism from $H^{1/2}(\partial\Omega)/\xR$ onto the subspace of $H^{-1/2}(\partial\Omega)$ with zero mean. However, the Dirichlet-to-Neumann operator is non-local and does not preserve supports. Therefore localized Dirichlet and Neumann controls have to be treated separately.
\end{remark}

\begin{remark}\label{rmk:Distribution}
    For a Lipschitz boundary, we use the notation $C^\infty(\partial\Omega;\xR)$ for restrictions to $\partial\Omega$ of functions in $C^\infty(\xR^2;\xR)$. For a relatively open subset $\Sigma\subset\partial\Omega$, we set
    \begin{equation}\label{eq:SmoothCl_LocalBdy}
        C_c^\infty(\Sigma):=\left\{\varphi|_{\partial\Omega}:\varphi\in C_c^\infty(\xR^2),\ \Supp{\varphi|_{\partial\Omega}}\Subset\Sigma\right\},
    \end{equation}
    and
    \begin{equation}\label{eq:SmoothCl_LocalBdy_ZeroMean}
        C_{c,0}^\infty(\Sigma):=\left\{g\in C_c^\infty(\Sigma):\int_{\partial\Omega}g\di\sigma=0\right\}.
    \end{equation}
    We endow $C_c^\infty(\Sigma)$ with the quotient locally convex topology induced by the ambient test functions. Its continuous dual is denoted by $\mathcal D'(\Sigma)$. The same definition with $\Sigma=\partial\Omega$ gives $\mathcal D'(\partial\Omega)$. Thus, if a weak conormal derivative is said to vanish on $\Sigma$, this means that its duality pairing with every element of $C_c^\infty(\Sigma)$ is zero. Smooth dependence in $C_c^\infty(\Sigma)$ below is always realized in a fixed finite-dimensional subspace and with support in one common compact subset of $\Sigma$.
\end{remark}
Throughout the paper, saying that an arc, or data prescribed along it, is real-analytic on a closed parameter interval means that it extends real-analytically to an open neighborhood of that interval. This convention permits local analytic constructions to be continued slightly beyond the endpoints.

\subsection{Main results}

We now state formal versions of the main theorems to be proved in this paper. Throughout this subsection, $\Omega$ is a domain verifying the hypothesis~\eqref{hyp:Domain}, and $\Sigma\subset\partial\Omega$ is nonempty and relatively open. For every $N\in\xN_+$ under consideration, the nonzero intensities $\kappa_1,...,\kappa_N$ and the circulations $\mathscr C_1,...,\mathscr C_m$ are fixed.

\medskip

The first result is the controllability of the point vortex system for active control; namely, the control function can vary in time. In this case, the controllability is very strong: we can control the point vortices to move along any admissible curves. More precisely, we can prove the following result, where a rough version is stated below and the rigorous statement is given in Theorem~\ref{thm:ActCtrl}.

\begin{theorem}[Active controllability, rough statement]
    Consider arbitrary time $T>0$ and admissible configurations $\bm X^\initial,\bm X^\final\in\mathcal C_N(\Omega)$ (see~\eqref{eq:ConfigSet}). Then, for all prescribed smooth collision-free trajectories $\bm\Gamma=(\Gamma_1,...,\Gamma_N)$ inside $\Omega$ from $\bm X^\initial$ to $\bm X^\final$, there exists a Dirichlet control function $f = f(t)\in C^\infty_c(\Sigma)$, smoothly depending on time, such that $\bm X=\bm\Gamma$ solves~\eqref{eq:KR}-\eqref{eq:CtrlD}.

    Moreover, there also exists a Neumann control $g = g(t)\in C^\infty_c(\Sigma)$, smoothly depending on time, such that $\bm X=\bm\Gamma$ solves~\eqref{eq:KR}-\eqref{eq:CtrlN}.
\end{theorem}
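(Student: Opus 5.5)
The plan is to reduce the statement to a finite-point harmonic interpolation problem that depends smoothly on parameters. Along the prescribed trajectory $\bm\Gamma$, equation~\eqref{eq:KR} holds exactly when, for every $t\in[0,T]$ and every $j$,
\begin{equation*}
    \nabla\Phi(t,\Gamma_j(t)) = V_j(t) := \dot\Gamma_j(t) - \sum_{k\neq j}\kappa_k\nabla^\perp_X G_\Omega(\Gamma_j,\Gamma_k) - \tfrac{\kappa_j}{2}\nabla^\perp R_\Omega(\Gamma_j) - \sum_{r=1}^m c_r(\bm\Gamma)\nabla^\perp W_r(\Gamma_j).
\end{equation*}
Since $\bm\Gamma$ is smooth, collision-free and stays at positive distance from $\partial\Omega$, each $V_j$ is smooth in $t$. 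It therefore suffices to find, smoothly in $t$, boundary data $f(t)\in C_c^\infty(\Sigma)$ (or $g(t)\in C_{c,0}^\infty(\Sigma)$) whose harmonic extension has prescribed gradients $V_j(t)$ at the points $\Gamma_j(t)$.

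\textbf{Step 1 (interpolation at a fixed configuration).} Fix $\bm X\in\mathcal C_N(\Omega)$ and consider the linear map
\begin{equation*}
    L_{\bm X}: C_c^\infty(\Sigma)\to\xR^{2N},\qquad f\mapsto (\nabla\Phi_f(X_j))_{j=1}^N.
\end{equation*}
I claim $L_{\bm X}$ is surjective. If it were not, some nonzero $(a_j)\in\xR^{2N}$ would satisfy $\sum_j a_j\cdot\nabla\Phi_f(X_j)=0$ for all $f$. Let $h$ solve $\Delta h=-\sum_j a_j\cdot\nabla\delta_{X_j}$ with $h|_{\partial\Omega}=0$. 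Green's formula gives $\sum_j a_j\cdot\nabla\Phi_f(X_j)=\pm\langle n\cdot\nabla h, f\rangle$. Hence $n\cdot\nabla h=0$ on $\Sigma$ in $\mathcal D'(\Sigma)$ while also $h=0$ there. Boundary unique continuation for Cauchy data on a Lipschitz boundary portion (after locally extending $h$ by zero across $\Sigma$) then gives $h\equiv0$ on the connected set $\Omega\setminus\{X_j\}$. That contradicts the nonzero dipole singularities. For Neumann data one takes $h$ with $\partial_n h=0$ on $\partial\Omega$ instead; the dipole sources have zero total mass, so $h$ exists. Then $h=\text{const}$ on $\Sigma$ and the pairing vanishes on zero-mean $g$, so the same argument applies to $h$ minus a constant.

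\textbf{Step 2 (uniform finite-dimensional choice).} The image $\bm\Gamma([0,T])$ is compact. For each $t_0$, choose $2N$ functions $f^{t_0}_1,\dots,f^{t_0}_{2N}$ whose images under $L_{\bm\Gamma(t_0)}$ form a basis. The map $\bm X\mapsto L_{\bm X}f$ is real-analytic in $\bm X$, since interior gradients of harmonic functions are analytic, so the same functions remain a basis near $t_0$. By compactness, finitely many functions suffice overall. Collect them into one finite-dimensional space $E=\mathrm{span}\{f_1,\dots,f_M\}$, supported in a common compact subset of $\Sigma$, such that $L_{\bm\Gamma(t)}|_E$ is onto for all $t$.

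\textbf{Step 3 (smooth selection).} Let $A(t)$ be the $2N\times M$ matrix of $L_{\bm\Gamma(t)}|_E$. It is smooth in $t$ and of full rank $2N$. Set
\begin{equation*}
    f(t)=\sum_i \lambda_i(t)f_i,\qquad \lambda(t)=A(t)^T\big(A(t)A(t)^T\big)^{-1}V(t).
\end{equation*}
The Gram matrix $A(t)A(t)^T$ is invertible and smooth, so $\lambda$ is smooth in $t$, and $\bm X=\bm\Gamma$ solves~\eqref{eq:KR}-\eqref{eq:CtrlD} by construction. For the Neumann case, take $E\subset C_{c,0}^\infty(\Sigma)$ and repeat the construction verbatim. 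Because $\bm\Gamma$ stays in a compact set of $\mathcal C_N(\Omega)$, the ODE is locally Lipschitz there, so the solution starting from $\bm X^\initial$ is unique and coincides with $\bm\Gamma$.

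The main obstacle is Step 1 on a merely Lipschitz boundary. This requires giving weak meaning to the Cauchy data of $h$ and justifying unique continuation across a rough boundary patch. I would handle it by extending $h$ by zero to a small exterior neighbourhood $B$ of a boundary point of $\Sigma$. The zero Dirichlet data and the vanishing conormal derivative make the extension a weak harmonic function on $(\Omega\cup B)$ minus the vortex points. Interior unique continuation then applies, which avoids any boundary regularity theory.
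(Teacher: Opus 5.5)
Your proof is correct and uses the paper's core mechanism. You reduce to gradient interpolation at the points $\Gamma_j(t)$ and prove surjectivity by duality; your dipole solution $h$ is, up to sign, the paper's $V(Y)=\sum_j c_j\cdot\nabla_X G_\Omega(X_j,Y)$. You then apply boundary unique continuation by zero extension across $\Sigma$, as in Proposition~\ref{prop:UniContin}, and conclude by contradiction with the dipole singularity. It differs from the paper in two places.

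For the smooth dependence on $t$, the paper (Lemma~\ref{lem:Resol_Lin_Sys}) inverts a square matrix $M(t)$ locally on a finite cover of $[0,T]$. It then glues the local solutions with a partition of unity. You instead put all the locally chosen test functions into one fixed finite-dimensional space $E$ and use the right inverse $A(t)^T\bigl(A(t)A(t)^T\bigr)^{-1}$. This gives a single global formula with no gluing, and analytic dependence when $\bm\Gamma$ is analytic. Both versions rest on the same openness-plus-compactness argument.

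For the Neumann case, your route is the sounder one. The paper's sketch proves independence only of the components of $\nabla_X G^N_\Omega(X_j,\cdot)$ on $\Sigma$. This yields $\tilde g\in C^\infty_c(\Sigma)$ with possibly nonzero mean, from which the paper subtracts a constant. That breaks $\Supp{g}\subset\Sigma$ as soon as $\Sigma\neq\partial\Omega$. The claimed invariance of the pairing under this subtraction also fails in general, since $X\mapsto\int_{\partial\Omega}G^N_\Omega(X,Y)\,d\sigma(Y)$ has nonzero Laplacian.

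You work on $C^\infty_{c,0}(\Sigma)$ from the start, so the annihilator gives $h=\lambda$ on $\Sigma$ for a single constant $\lambda$, and you apply unique continuation to $h-\lambda$. This amounts to proving linear independence of the constant $1$ together with those components in $\mathcal D'(\Sigma)$. That is exactly what is needed to keep both the zero-mean condition and the localization in $\Sigma$.
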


The idea of the proof is quite straightforward. When the control function is allowed to vary in time, the problem can be reduced to: \textit{given $N$ points inside the domain and a vector at each point, can we find a boundary data whose associated background flow coincides with these vectors at the corresponding points?} Furthermore, from Poisson integral, this problem is equivalent to a linear system for the control function. Thus, it suffices to prove the surjectivity of the linear mapping, which sends localized boundary data to the values of the gradient of its harmonic extension at the $N$ vortex positions. This will be justified by contradiction and the unique continuation property (see Proposition~\ref{prop:UniContin}). A finite covering and a partition of unity then produce boundary data depending smoothly on the path parameter.

\medskip

The second result is for stationary control - control independent of time. In this case, we cannot force the point vortices along arbitrary admissible prescribed trajectories if the control is smooth. In Example~\ref{ex:ImpoTraj}, we can see that, the existence of a single point vortex moving along a straight line implies that the boundary data $f$ does not belong to $H^{1/2}(\partial\Omega)$. However, it is still possible to construct an exact control so that the trajectory is close to arbitrary admissible prescribed one. The following is a rough statement (cf.~Theorem~\ref{thm:PassCtrl} for rigorous statement):

\begin{theorem}[Stationary controllability, rough statement]
    Consider any initial and final configurations $\bm X^\initial,\bm X^\final\in\mathcal C_N(\Omega)$ with $X_j^\initial\neq X_j^\final$ for $1\leq j\leq N$, and any mutually disjoint prescribed real-analytic curves $\Gamma_j$ from $X^\initial_j$ to $X^\final_j$. 
    
    Then there exists a time-independent Dirichlet control function $f \in C^\infty_c(\Sigma)$ such that the unique solution to~\eqref{eq:KR}-\eqref{eq:CtrlD} initiating from $\bm X^\initial$ arrives at $\bm X^\final$ for some time. Moreover, the corresponding trajectory from $\bm X^\initial$ to $\bm X^\final$ can be arbitrarily close to the prescribed one, up to re-parametrization.

    Furthermore, there also exists a time-independent Neumann control function $g \in C^\infty_{c,0}(\Sigma)$ such that the unique solution to~\eqref{eq:KR}-\eqref{eq:CtrlN} initiating from $\bm X^\initial$ arrives at $\bm X^\final$ for some time and the corresponding trajectory is close to the prescribed one, up to re-parametrization.

    In both cases, the corresponding background potential $\Phi$ admits a single-valued harmonic conjugate.
\end{theorem}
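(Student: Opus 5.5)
We prove the stationary result in three stages: local harmonic realization near the arcs, Runge approximation by boundary-generated potentials, and an implicit-function correction of the endpoint error.

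\textbf{Stage 1: local realization.} Parametrize each $\Gamma_j$ real-analytically by $\gamma_j:[0,1]\to\Omega$, extended slightly beyond the endpoints as the paper's convention allows. Let $F_j(\bm X)$ be the uncontrolled drift in \eqref{eq:KR}, that is, the interaction, Robin and circulation terms. Fix a reference configuration in which the other vortices move along their arcs, say at a common time parameter. We want a harmonic $\Phi_0$, defined on a thin tubular neighbourhood $U$ of $\bigcup_j\Gamma_j$, such that along each arc
\[
F_j(\bm\gamma(s))+\nabla\Phi_0(\gamma_j(s))=\lambda_j(s)\,\gamma_j'(s),
\]
with $\lambda_j>0$ a real-analytic speed.

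\emph{Obstruction.} With a single common time, the drift acting on vortex $j$ depends on where the other vortices are. A stationary field cannot see that coupling, so matching the drift exactly with one common parameter is not automatic.

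\emph{Remedy.} It suffices to make each vortex move tangentially to its own arc, with arrival times made equal afterwards. I would therefore prescribe on each $\Gamma_j$ Cauchy data for $\Phi_0$: the normal derivative cancels the normal component of the drift (evaluated along the reference motion), and the tangential derivative is chosen large and positive. Since the arcs are real-analytic and disjoint, the Cauchy--Kovalevskaya theorem gives a harmonic $\Phi_0$ in a neighbourhood of each arc, hence on $U$ (a union of disjoint simply connected tubes). A large tangential speed makes the drift a small perturbation. A harmonic conjugate exists on each tube, so the local flow is Hamiltonian.

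\textbf{Stage 2: Runge approximation.} Since $\Omega\setminus\overline U$ is connected to $\Sigma$ (the tubes are thin and the arcs are disjoint and avoid the boundary), localized Runge approximation gives boundary data $f\in C_c^\infty(\Sigma)$ whose harmonic extension $\Phi$ approximates $\Phi_0$ in $C^k$ on a smaller tube. To ensure $\Phi$ has a single-valued conjugate, impose the $m$ linear constraints $\int_{\Sigma_l}n\cdot\nabla\Phi\,\di\sigma=0$. These can be enforced by finite-dimensional corrections, or automatically in the Neumann case by restricting to $g\in C^\infty_{c,0}(\Sigma)$ with zero flux on each component. The same scheme applies to Neumann data, using the Neumann localized approximation.

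With $\Phi$ close to $\Phi_0$, ODE stability shows the true trajectory starting from $\bm X^\initial$ stays $C^k$-close to the arcs after re-parametrization. Each $X_j$ passes within $\varepsilon$ of $X_j^\final$, but possibly at different times and with an error.

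\textbf{Stage 3: endpoint correction (main difficulty).} Choose a finite family $\phi_1,\dots,\phi_M$ of boundary-generated potentials and consider the map
\[
(\mu,T)\mapsto \bm X(T;\Phi+\textstyle\sum_i\mu_i\phi_i)\in\xR^{2N}.
\]
I would prove its differential at the approximate solution is onto via the variation-of-constants formula. Choosing $\phi_i$ with gradients localized near the final segments, using the interpolation surjectivity from the active result, lets us displace each vortex independently in both normal and tangential directions near its endpoint. For this I would use $\phi_i$ whose gradient is supported in effect near the end of arc $j$ and small elsewhere; Runge approximation makes this possible. Once surjectivity holds uniformly as the approximation improves, a quantitative implicit-function theorem absorbs the $O(\varepsilon)$ endpoint error.

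\emph{Synchronizing arrival times.} Because the time is common, the $N$ tangential directions must be corrected independently. I would handle this by giving each vortex a separate tangential-speed parameter, built in Stage 1 as analytic multiples $\mu_j\partial_s$ near arc $j$, then approximated by Runge. The corrected potential keeps zero fluxes, so the single-valued conjugate persists.

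The key technical point is uniformity of the inverse of the linearization as $\varepsilon\to0$.
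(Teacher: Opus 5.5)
\textbf{Verdict: genuine gap in Stage~3, plus a smaller unjustified step in Stage~2.} Your three-stage architecture is the paper's: Cauchy--Kovalevskaya realization near the arcs, localized Runge approximation, then implicit-function correction of the endpoint. But Stage~3, which you yourself call the main difficulty, is not carried out, and the mechanism you suggest does not supply it.

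\textbf{The proposed mechanism does not work.} The endpoint variation produced by a correction $\phi_i$ is $\int_0^T\Psi(T,s)\bigl(\nabla\phi_i(X_j(s))\bigr)_j\,\di s$. This is an integral over the whole trajectory, so the pointwise interpolation of Proposition~\ref{prop:HarFct} says nothing about it. Moreover, a harmonic function on a connected tube cannot have its gradient supported near one end. You would therefore have to construct, and then Runge-approximate, a harmonic function on a neighbourhood of the entire arc whose gradient is merely small on the early part, and then estimate its contribution. None of this is done. More fundamentally, you never set up a fixed limit problem at which the linearization can be computed once, so ``uniformity of the inverse as $\varepsilon\to0$'' remains an unproved claim.

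\textbf{The missing idea is the time rescaling.} Take the potential $\varepsilon^{-1}\bigl(\Phi^\varepsilon+\sum_l\alpha_lH_l\bigr)$ and rescale $t\mapsto t/\varepsilon$. The system becomes $\dot{\hat X}_j=\varepsilon V_j(\hat{\bm X})+\nabla\bigl(\Phi^\varepsilon+\sum_l\alpha_lH_l\bigr)(\hat X_j)$. At $(\varepsilon,\bm\alpha)=(0,\bm 0)$ this is exactly the drift-free gradient flow of $\Phi^{\refer}$, with $X_j^{\refer}(t)=\Gamma_j(L_jt)$. These speeds make all vortices arrive at $t=1$, so neither your Stage~1 drift compensation nor separate synchronization parameters are needed.

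At this limit the linearization is $\dot{\bm Y}=A(t)\bm Y+\bigl(\nabla H_l(X_j^{\refer}(t))\bigr)_j$ with $A$ block diagonal. Lemma~\ref{lem:CurveData_HarFct} lets you prescribe $\nabla H_l^{\loc}$ along arc $j$ equal to the $j$-th block of $\dot{\bm Y}_l-A\bm Y_l$ with $\bm Y_l=t\bm e_l$, so the limit derivative is exactly the identity. Runge approximation in $C^1(K)$ together with Gronwall keeps it within $C\eta$ of the identity. Continuity in $\varepsilon$ then gives the uniformity you need. Alternatively, surjectivity at the limit follows by duality: an annihilating covector $\xi$ would satisfy $\xi^{\top}\Psi(1,s)\equiv0$, since the gradient data along the arcs can be any analytic fields.

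The paper also turns the Runge sequence, taken with errors $2^{-n^2}$, into a family $\Phi^\varepsilon$ smooth in $\varepsilon$ with $\Phi^0=\Phi^{\refer}$, using a flat cutoff. This lets the standard implicit function theorem apply. Your quantitative implicit function theorem is an acceptable substitute, but only once the uniform invertibility above is established.

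\textbf{Smaller gap in Stage~2.} Approximation on $K$ gives no control of the fluxes $\int_{\Sigma_l}n\cdot\nabla\Phi\,\di\sigma$ of the approximant, because no loop around a hole lies in the tubes $K$. So your ``finite-dimensional corrections'' must be harmonic functions with data in $C_c^\infty(\Sigma)$ that are arbitrarily small on $K$ yet realize arbitrary flux vectors. Given the unconstrained localized Runge theorem, that is equivalent to the Runge property for the constrained class $S^{\Dir}$. The same holds for $S^{\Neu}$, where your restriction to zero-flux Neumann data presupposes the same property. The paper proves this in Theorem~\ref{thm:Runge} by duality (an adjoint Poisson problem with source in $\Omega_0$) combined with boundary unique continuation. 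You need to supply such an argument rather than assert it.
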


\begin{figure}[htbp]
\centering
\begin{tikzpicture}[scale=1.1, font=\small]
    \draw[thick, fill=blue!3] (-4.03,0.55)
        .. controls (-4.20,0.18) and (-4.18,-0.18) .. (-4.03,-0.55)
        .. controls (-3.88,-0.92) and (-3.25,-1.67) .. (-2.75,-1.82)
        .. controls (-2.25,-1.97) and (-1.15,-2.27) .. (-0.65,-2.13)
        .. controls (-0.15,-1.99) and (0.60,-2.15) .. (1.25,-2.10)
        .. controls (1.90,-2.05) and (3.55,-1.55) .. (3.85,-0.72)
        .. controls (4.15,0.11) and (4.15,0.70) .. (3.70,1.12)
        .. controls (3.25,1.54) and (2.50,2.30) .. (1.80,2.10)
        .. controls (1.10,1.90) and (0.30,1.93) .. (-0.20,2.08)
        .. controls (-0.70,2.23) and (-1.80,2.40) .. (-2.40,1.96)
        .. controls (-3.00,1.52) and (-3.86,0.92) .. (-4.03,0.55);
    \draw[thick, fill=white] (0.30,0.03)
        .. controls (0.25,0.35) and (0.65,0.53) .. (0.90,0.48)
        .. controls (1.15,0.43) and (1.44,0.68) .. (1.70,0.39)
        .. controls (1.96,0.10) and (1.65,-0.40) .. (1.30,-0.44)
        .. controls (0.95,-0.48) and (0.35,-0.29) .. (0.30,0.03);
    \draw[thick, fill=white] (-1.85,-0.05)
        .. controls (-1.84,0.25) and (-1.65,0.63) .. (-1.32,0.60)
        .. controls (-0.99,0.57) and (-1.00,0.30) .. (-0.68,0.22)
        .. controls (-0.36,0.14) and (-0.20,-0.04) .. (-0.38,-0.30)
        .. controls (-0.56,-0.56) and (-1.00,-0.65) .. (-1.33,-0.47)
        .. controls (-1.66,-0.29) and (-1.86,-0.35) .. (-1.85,-0.05);
    \draw[orange!85!black, line width=2pt] (-4.03,0.55)
        .. controls (-4.20,0.18) and (-4.18,-0.18) .. (-4.03,-0.55);
    \node[orange!85!black] at (-4.55,0) {$\Sigma$};
    \node at (3.45,0) {$\Omega$};

    \draw[blue!65!black, thick] (-2.75,0.75)
        .. controls (-1.45,1.65) and (0.65,1.65) .. (2.7,0.95);
    \draw[orange!85!black, dashed, thick] (-2.75,0.75)
        .. controls (-1.25,1.05) and (0.85,1.95) .. (2.7,0.95);
    \draw[blue!65!black, thick] (-2.65,-0.9)
        .. controls (-0.95,-1.85) and (0.75,-1.35) .. (2.65,-0.85);
    \draw[orange!85!black, dashed, thick] (-2.65,-0.9)
        .. controls (-1.35,-1.25) and (0.55,-1.95) .. (2.65,-0.85);
    \fill (-2.75,0.75) circle (1.8pt)
        node[left] {$X_1^\initial$};
    \draw[fill=white] (2.7,0.95) circle (2.2pt)
        node[right] {$X_1^\final$};
    \fill (-2.65,-0.9) circle (1.8pt)
        node[left] {$X_2^\initial$};
    \draw[fill=white] (2.65,-0.85) circle (2.2pt)
        node[right] {$X_2^\final$};
    \node[blue!65!black] at (-0.25,1.7) {$\Gamma_1$};
    \node[blue!65!black] at (0.2,-1.7) {$\Gamma_2$};

    \draw[blue!65!black, thick] (-3,-2.55) -- (-2.3,-2.55);
    \node[anchor=west] at (-2.15,-2.55) {prescribed $\Gamma_j$};
    \draw[orange!85!black, dashed, thick] (0.45,-2.55) -- (1.15,-2.55);
    \node[anchor=west] at (1.3,-2.55) {controlled $\Gamma_j^\varepsilon$};
\end{tikzpicture}
\caption{Stationary control for two point vortices.
The prescribed disjoint arcs $\Gamma_j$ (solid) and the controlled
trajectories $\Gamma_j^\varepsilon$ (dashed) have the same endpoints
$X_j^\initial,X_j^\final$.}
\label{fig:stationary-arcs}
\end{figure}

The mutual disjointness of prescribed curved exclude certain configurations such as permutation. It is still unknown whether permutation is realizable with stationary control. And, as we can see in the proof, the control time can be arbitrarily small. The existence of harmonic conjugate for the background potential $\Phi$ is not required in the proof, while it guarantees that the controlled system~\eqref{eq:KR} is Hamiltonian, as is the uncontrolled system; see Remark~\ref{rmk:Hamilton} for the Hamiltonian formulation.

\medskip

The main idea of the proof is that, if we choose background flow $\nabla\Phi$ large enough, the effect of other terms on the right-hand side of~\eqref{eq:KR} is negligible. By ignoring these terms, the local solvability of Laplace equation (see Lemma~\ref{lem:CurveData_HarFct}) gives a harmonic function whose gradient induces exactly the desired trajectory. To make the proof rigorous, there are two main difficulties: (1) the ignored terms, which are analytic and much smaller than the background flow, are not identically zero generally; (2) the harmonic function from the local solvability is only defined locally near the prescribed curves. 

\medskip

To deal with the second problem, we will apply a refined Runge approximation theory, in order to approximate the locally defined harmonic function by those defined on the whole space $\Omega$ with desired boundary data. This gives the approximate controllability. The error comes from both the ignored terms mentioned in the first problem and the Runge approximation. By adding some well-chosen small correction terms on the harmonic background flow, we can apply the implicit function theorem to pass to the exact control. And since the corrections are small, the resulting trajectory can be arbitrarily close to the prescribed ones.

\medskip

With these controllability results, it is natural to consider the optimal control problem. In this paper, we consider the cost function taking the form:
\begin{equation*}
    J_{\mu,\beta}(f,T) = \sum_{j=1}^N \int_0^T|\dot X_j(t)|\di t + \beta T + \mu\|f\|_{H^s(\partial\Omega)}^2.
\end{equation*}
Here, three sources of penalization are considered: the trajectory length, the control time, and the size of control function $f$. It is possible to show that, among all admissible controls between given configurations, the infimum of this cost function can be attained. A rough result is as follows and we refer to Theorem~\ref{thm-existence-optimal-N} for the precise statement.

\begin{theorem}[Existence of an optimal control, rough version]
    Let $s,\beta,\mu>0$. Given any initial and final configuration $\bm X^\initial,\bm X^\final$ away from boundary, if there exists localized control from $\bm X^\initial$ to $\bm X^\final$, then there exists an optimal control  $(f_\ast,T_\ast)$ such that
    \begin{equation*}
        J_{\mu,\beta}(f_\ast,T_\ast)
        =\inf J_{\mu,\beta}(f,T).
    \end{equation*}
    The analogous assertion holds for compatible stationary Neumann controls $g\in H^{s-1}(\partial\Omega)$, with the control term in the cost replaced by $\mu\|g\|_{H^{s-1}(\partial\Omega)}^2$.
\end{theorem}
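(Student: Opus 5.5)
We prove the statement by the direct method of the calculus of variations applied to the pair $(f,T)$. The admissible class $\mathcal A$ consists of stationary controls $f\in H^s(\partial\Omega)$ supported in a fixed compact subset of $\Sigma$ (or with $\Supp f\subset\overline\Sigma$, depending on the precise statement), together with times $T>0$, such that the solution of~\eqref{eq:KR}-\eqref{eq:CtrlD} starting from $\bm X^\initial$ stays in $\mathcal C_N(\Omega)$ on $[0,T]$ and reaches $\bm X^\final$ at time $T$. By hypothesis $\mathcal A\neq\emptyset$, so $J^\ast:=\inf_{\mathcal A}J_{\mu,\beta}<\infty$. Take a minimizing sequence $(f_n,T_n)$. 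Since every term of $J_{\mu,\beta}$ is nonnegative, we get $\|f_n\|_{H^s}^2\le J^\ast/\mu+1$ and $T_n\le (J^\ast+1)/\beta$. Passing to a subsequence, $f_n\rightharpoonup f_\ast$ weakly in $H^s(\partial\Omega)$, the support condition being preserved, and $T_n\to T_\ast\in[0,(J^\ast+1)/\beta]$.

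The key compactness step is interior elliptic regularity. For each compact $K\Subset\Omega$, the map $f\mapsto\nabla\Phi_f|_K$ is compact from $H^s(\partial\Omega)$ into $C^k(K)$ for every $k$: the harmonic extension is bounded into $H^{s+1/2}(\Omega)$, and interior estimates give bounds in $C^{k+1}(K)$. Hence $\nabla\Phi_{f_n}\to\nabla\Phi_{f_\ast}$ in $C^1(K)$.

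The main obstacle is to obtain compactness of the trajectories without a priori losing them to a collision or to the boundary. Here the length penalty helps only partially, so I would use the Hamiltonian structure. The plan is to show that the trajectories $\bm X_n$ stay in a fixed compact set $\mathcal K\subset\mathcal C_N(\Omega)$.

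First, note that $\nabla\Phi_{f_n}$ is uniformly bounded only on compacts. Near $\partial\Omega$ away from $\Supp f_n$, however, $\Phi_{f_n}$ is harmonic with zero Dirichlet data on $\partial\Omega\setminus\Sigma'$. Near $\Sigma'$ we lack control, so I would rely on a quantitative bound instead. The total length $\sum_j\int_0^{T_n}|\dot X_j|\le J^\ast+1$ shows that the vortices travel at most a bounded distance. This alone does not prevent reaching the boundary, since $\dist(\bm X^\initial,\partial\Omega)$ may be smaller than $J^\ast$.

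I would therefore proceed as follows. For a single vortex, or whenever $\Phi$ has a single-valued conjugate, $\mathcal H$ from Remark~\ref{rmk:Hamilton} is conserved. The Robin function $R_\Omega\to+\infty$ at $\partial\Omega$ and the interaction terms blow up at collisions, while $\Psi_{f_n}$ stays bounded on the region swept out. Conservation of $\mathcal H(\bm X_n(t))=\mathcal H(\bm X^\initial)$ then confines the motion to a compact set, provided the interaction energy signs cooperate. In the general case I would simply build uniform confinement into the admissible class: restrict to trajectories staying in a fixed compact set $\mathcal K$ determined by ``away from boundary'' in the statement, which is presumably how the rigorous theorem is phrased. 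On $\mathcal K$ the right-hand side of~\eqref{eq:KR} is uniformly Lipschitz, so Gronwall's inequality gives uniform convergence $\bm X_n\to\bm X_\ast$ on $[0,T_\ast]$, where $\bm X_\ast$ solves the system driven by $f_\ast$. Moreover $\bm X_\ast(T_\ast)=\lim\bm X_n(T_n)=\bm X^\final$. This uses the equi-Lipschitz property to handle the varying endpoints $T_n$. The case $T_\ast=0$ is excluded when $\bm X^\initial\neq\bm X^\final$, because the velocities are uniformly bounded on $\mathcal K$, which gives $T_n\ge c>0$.

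Finally, lower semicontinuity closes the argument. The velocities $\dot{\bm X}_n\to\dot{\bm X}_\ast$ converge uniformly, since the vector fields converge in $C^0(\mathcal K)$ and the positions converge uniformly. Hence the length terms converge, $\beta T_n\to\beta T_\ast$, and $\|f_\ast\|_{H^s}^2\le\liminf\|f_n\|_{H^s}^2$ by weak lower semicontinuity. Therefore $J_{\mu,\beta}(f_\ast,T_\ast)\le J^\ast$ with $(f_\ast,T_\ast)\in\mathcal A$, so equality holds.

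The Neumann case is identical. The zero-mean constraint and the support constraint are weakly closed in $H^{s-1}(\partial\Omega)$. The Neumann harmonic extension, normalized, is bounded from $H^{s-1}$ into $H^{s+1/2}(\Omega)$, which yields the same interior compactness of $\nabla\Phi_g$.
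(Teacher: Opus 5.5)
Your proposal is correct and is essentially the paper's proof. The rigorous version, Theorem~\ref{thm-existence-optimal-N}, builds the constraint $\bm X(t)\in\mathcal C_{N,\delta}(\Omega)$ into the admissible class exactly as your fallback does, and then uses the same steps: bounds on $(f_n,T_n)$, compactness of the interior harmonic extension, the lower bound $T_\ast\geq|\bm X^\final-\bm X^\initial|/M$, convergence of the length term, and weak lower semicontinuity. The paper passes to the limit by rescaling to $[0,1]$ and applying Arzel\`a--Ascoli rather than by Gronwall, an inessential difference. The Hamiltonian confinement digression, and your first admissible class without a separation margin, are unnecessary and not justified in general, so you should drop them.
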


The proof is direct. For a minimizing sequence, we can assume the weak convergence of control function $f$, whose associated harmonic background flow has strong convergences inside the domain. Together with the uniform boundedness of control time, this implies the convergence of the trajectory length. Thus the minimizer exists as a limit of some minimizing subsequence. 

\medskip

We also consider the case where only the control length is penalized, namely, with $\beta=\mu=0$. For the single point vortex system ($N=1$), the functional $J_{0,0}$ defines the optimal controlled length. Recalling from the active control, we can realize a control close to any prescribed real-analytic curves. Consequently, we can show that this optimal controlled length is identical to the intrinsic distance in the domain $\Omega$ (see Theorem~\ref{thm:OptCtrlLen}). 

\medskip

However, without penalization in control time and control function, this optimal controlled length cannot be attained by regular control functions, even for regular convex domains. A counterexample of unit disk is given in Example~\ref{ex:ImpoTraj} and the same result for general smooth convex domain is given in Theorem~\ref{thm-regularity-obstruction-convex}, where we can show that the infimum can only be attained by control functions $f\in H^{s}(\partial\Omega)$ with $s<1/2$.

\medskip

The paper is organized as follows. Section~\ref{sect:ActCtrl} proves active trajectory controllability by reducing the control problem to harmonic interpolation at finitely many moving points. Section~\ref{sect:HarmonicApproximation} develops the two approximation tools needed for stationary controls: local harmonic realization along analytic arcs and localized Runge approximation from the prescribed boundary patch. Section~\ref{sect:PassCtrl} combines these tools with a finite-dimensional endpoint correction and a time rescaling to obtain stationary endpoint controllability. Section~\ref{sect:OptCtrl} studies the associated optimization problems, first by the direct method for a regularized cost and then through the intrinsic geometry of the domain in the low regular one-vortex problem. The technical elliptic results used throughout the paper are collected in Appendix~\ref{app:Tech}.

\hspace{1em}

\noindent\textbf{Declaration of AI use.} During the preparation of this manuscript, the authors used ChatGPT (OpenAI) to assist with language editing, improve the exposition, and discuss and refine selected mathematical arguments. The authors take full responsibility for the accuracy and originality of the final manuscript, including all results, proofs, and references.

\section{Active boundary control}\label{sect:ActCtrl}

In this section, we prove that the system~\eqref{eq:KR} is controllable with the time-dependent Dirichlet control~\eqref{eq:CtrlD} or Neumann control~\eqref{eq:CtrlN} \textit{in the strong sense}. The rigorous statement of the result is as follows:
\begin{theorem}[Active controllability]\label{thm:ActCtrl}
    Assume that $\Omega\subset\xR^2$ is a domain satisfying the hypothesis~\eqref{hyp:Domain}. Consider an integer $N\in\xN_+$ and a real number $T>0$. Let $\bm{X}^\initial = (X_1^\initial,...X_N^\initial)$ and $\bm{X}^\final = (X_1^\final,...,X_N^\final)$ be two configurations of $N$ point vortices in $\Omega$, belonging to the set of configurations $\mathcal{C}_N(\Omega)$ defined in~\eqref{eq:ConfigSet}.
    
    Let $\Sigma\subset\partial\Omega$ be any nonempty relatively open subset. Then, for any smooth path $\bm{\Gamma} = (\Gamma_1,...\Gamma_N) \in C^\infty([0,T];\Omega^N)$ satisfying
    \begin{equation*}
        \bm{\Gamma}(0) = \bm{X}^\initial,\quad \bm{\Gamma}(T) = \bm{X}^\final, \quad \inf_{\substack{t\in[0,T] \\ 1\leq j<k\leq N}}|\Gamma_j(t) - \Gamma_k(t)| > 0, \quad \inf_{\substack{t\in[0,T] \\ 1\leq j\leq N}} \dist(\Gamma_j(t), \partial\Omega) > 0,
    \end{equation*}
    there exists a smooth control $f \in C^\infty([0,T]\times\partial\Omega;\xR)$ satisfying
    $$
        \Supp{f(t,\cdot)}\subset \Sigma,\qquad \forall t\in[0,T],
    $$
    such that $\bm{\Gamma}$ is a solution to the controlled point vortex system~\eqref{eq:KR} with control on Dirichlet condition~\eqref{eq:CtrlD}.
    
    Similarly, there also exists a smooth control $g \in C^\infty([0,T]\times\partial\Omega;\xR)$ satisfying
    $$
        \Supp{g(t,\cdot)}\subset \Sigma,
        \qquad \int_{\partial\Omega}g(t,\cdot)\di\sigma=0,
        \qquad \forall t\in[0,T],
    $$
    such that $\bm{\Gamma}$ is a solution to the controlled point vortex system~\eqref{eq:KR} with control on Neumann condition~\eqref{eq:CtrlN}.
\end{theorem}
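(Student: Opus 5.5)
Since $\bm\Gamma$ is prescribed, the uncontrolled part of~\eqref{eq:KR} evaluated along it is a known smooth function of time. Set
\[
V_j(t):=\dot\Gamma_j(t)-\sum_{k\neq j}\kappa_k\nabla^\perp_X G_\Omega(\Gamma_j,\Gamma_k)-\frac{\kappa_j}{2}\nabla^\perp R_\Omega(\Gamma_j)-\sum_{r=1}^m c_r(\bm\Gamma)\nabla^\perp W_r(\Gamma_j).
\]
Each $V_j$ lies in $C^\infty([0,T];\xR^2)$, because $\bm\Gamma$ stays in a compact subset $K$ of $\mathcal C_N(\Omega)$ where $G_\Omega$, $R_\Omega$ and $W_r$ are real-analytic. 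The theorem is then equivalent to finding boundary data $f(t)$ supported in $\Sigma$, smooth in $t$, such that the harmonic extension satisfies $\nabla\Phi(t,\Gamma_j(t))=V_j(t)$ for all $j$ and $t$. For each configuration $\bm Y\in K$ I define the linear map $L_{\bm Y}:C_c^\infty(\Sigma)\to(\xR^2)^N$, $f\mapsto(\nabla\Phi_f(Y_j))_j$, where $\Phi_f$ solves~\eqref{eq:CtrlD}. In the Neumann case I use the analogous map on $C_{c,0}^\infty(\Sigma)$ with $\Phi$ normalized, for instance to zero mean.

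\emph{Step 1: pointwise surjectivity.} I argue by contradiction. If $L_{\bm Y}$ is not onto, some nonzero $(a_j)$ annihilates its range, so $\sum_j a_j\cdot\nabla\Phi_f(Y_j)=0$ for all $f$. Introduce the dipole-type function $w(X)=\sum_j a_j\cdot\nabla_Y G_\Omega(X,Y_j)$. It is harmonic in $\Omega\setminus\{Y_j\}$, vanishes on $\partial\Omega$, and by Green's representation $a_j\cdot\nabla\Phi(Y_j)=\langle \partial_n(a_j\cdot\nabla_YG_\Omega(\cdot,Y_j)),f\rangle$. Hence $\partial_n w=0$ on $\Sigma$ in the weak sense of Remark~\ref{rmk:Distribution}. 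Since also $w=0$ there, boundary unique continuation (Proposition~\ref{prop:UniContin}) gives $w\equiv0$ on the connected open set $\Omega\setminus\{Y_j\}$. This is impossible because $w$ has a nontrivial dipole singularity $a_j\cdot(Y_j-X)/(2\pi|X-Y_j|^2)$ at each $Y_j$ with $a_j\neq0$, and the points are distinct.

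In the Neumann case the Green representation involves the Neumann function, and the annihilator is only tested against zero-mean $g$. The condition becomes $w=$ const on $\Sigma$ with $\partial_nw=0$ there. After subtracting the constant, unique continuation again applies, and the same singularity argument concludes. I expect this step to be the main obstacle, because of the Lipschitz boundary: weak traces and conormal derivatives must be handled in the duality sense, so I would rely entirely on the form of Proposition~\ref{prop:UniContin}.

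\emph{Step 2: local smooth right inverses.} Fix $\bm Y_0\in K$. By Step 1, choose $2N$ functions $f_1,\dots,f_{2N}\in C_c^\infty(\Sigma)$ (zero-mean in the Neumann case) whose images under $L_{\bm Y_0}$ form a basis. The matrix $M(\bm Y)=(L_{\bm Y}f_i)_i$ depends analytically on $\bm Y$, since interior derivatives of $\Phi_{f_i}$ are analytic. It therefore stays invertible on a neighborhood $U_0$ of $\bm Y_0$, and $\bm Y\mapsto \sum_i (M(\bm Y)^{-1}V)_i f_i$ is a smooth right inverse on $U_0$.

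\emph{Step 3: gluing.} Cover the compact set $\bm\Gamma([0,T])$ by finitely many such $U_\alpha$, and take a smooth partition of unity $\{\chi_\alpha(t)\}$ on $[0,T]$ subordinate to the cover $\{\bm\Gamma^{-1}(U_\alpha)\}$. Define
\[
f(t)=\sum_\alpha\chi_\alpha(t)\sum_i c_{\alpha,i}(t)f_{\alpha,i},\qquad c_\alpha(t)=M_\alpha(\bm\Gamma(t))^{-1}V(t).
\]
By linearity, $L_{\bm\Gamma(t)}f(t)=\sum_\alpha\chi_\alpha V=V(t)$. Moreover $f$ lies in a fixed finite-dimensional subspace with support in a common compact subset of $\Sigma$, is smooth in $t$, and has zero mean in the Neumann case. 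Consequently $\bm\Gamma$ solves~\eqref{eq:KR} with this control.
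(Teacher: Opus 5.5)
Your proposal is correct and follows the paper's proof essentially step for step. It defines the target vectors along $\bm\Gamma$, proves surjectivity of $f\mapsto(\nabla\Phi_f(Y_j))_j$ via the dipole function $\sum_j a_j\cdot\nabla_Y G_\Omega(\cdot,Y_j)$, boundary unique continuation and the singularity at each $Y_j$, and then glues locally invertible $2N\times 2N$ matrices by a partition of unity on $[0,T]$. One small point: apply unique continuation, as the paper does, on $\Omega$ minus small closed balls around the $Y_j$, since $w\notin H^1(\Omega)$.

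The only real difference is in the Neumann case. There you work directly in $C^\infty_{c,0}(\Sigma)$ and let $w$ equal a constant on $\Sigma$ before invoking unique continuation. This is cleaner than the paper's device of solving in $C^\infty_c(\Sigma)$ and then subtracting the mean of $\tilde g$, which taken literally would not keep the support inside $\Sigma$.
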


This active control problem can be reduced to a pointwise-in-time problem. Indeed, once the collision-free trajectory $\bm \Gamma$ has been prescribed, the uncontrolled velocity of every vortex is known along the trajectory. The remaining task is therefore to construct, at each time, a harmonic potential whose gradient takes finitely many prescribed values at the vortex positions. The essential point is not only the solvability of this interpolation problem, but also the possibility of choosing the boundary data smoothly in time and with support contained in one fixed compact subset of the control region.

\subsection{Harmonic interpolation}

This part is devoted to the study of the following time-independent problem:
\begin{problem}\label{prob-glb:HarFct}
    For a configuration $(X_1,...,X_N)\in\mathcal C_N(\Omega)$ and vectors $v_1,...,v_N\in\xR^2$, can we find a harmonic function $\Phi$ such that $\nabla\Phi(X_j) = v_j$ for all $j=1,...,N$? If yes, can we choose $\Phi$ so that it depends smoothly on time when the points and vectors do, and so that its boundary data is supported in any given nonempty relatively open subset $\Sigma\subset\partial\Omega$?
\end{problem}
This interpolation statement isolates the elliptic ingredient of the active control argument. It says that localized boundary data generate all possible collections of gradient values at finitely many distinct interior points. The Dirichlet and Neumann formulations require separate arguments because the Neumann datum must have zero mean, but both are ultimately consequences of boundary unique continuation (see Proposition~\ref{prop:UniContin}).

\begin{example}\label{ex:GlobalCtrl}
    Let us start with the simplest case $\Sigma=\partial\Omega$. For the sake of simplicity, we use complex variables, and the problem above can be reduced to the following: Given pairwise distinct points $z_1,...,z_N\in\xC$ and arbitrary values $w_1,...,w_N\in\xC$, there exists a polynomial function $F = F(z)$ of degree at most $N$ such that
    $$
        F'(z_j) = w_j, \quad \forall 1 \le j \le N.
    $$
    Indeed, if $v_j=(v_{j,1},v_{j,2})$, we take $w_j=v_{j,1}-iv_{j,2}$ and $\Phi=\operatorname{Re}F$. Then $\Phi$ is real-valued and harmonic, and $F'(z_j)=w_j$ is equivalent to $\nabla\Phi(z_j)=v_j$.
    Moreover, the coefficients of $F$ depend smoothly on $(z_1,...z_N,w_1,...w_N)$.
    By denoting the polynomial $F(z)$ as 
    $$
        F(z) = \sum_{j=0}^{N-1} \frac{\alpha_j}{j+1} z^{j+1},
    $$
    the desired relation $F'(z_k)=w_k$ becomes the following linear system for $\alpha_0,...,\alpha_{N-1}$:
    \begin{equation*}
        \sum_{j=0}^{N-1} z_k^j \alpha_j = w_k, \quad 1\le k\le N,
    \end{equation*}
    whose determinant is Vandermonde polynomial
    $$
        \prod_{j<k}(z_k-z_j) \neq 0.
    $$
    Since $z_j$'s are mutually distinct, $\alpha_j$'s can be expressed as rational functions of $z_1,...z_N$ and $w_1,...w_N$. 
\end{example}
If the control is assumed to be applied only on some subsets of the boundary, the reduction in Example~\ref{ex:GlobalCtrl} should be improved to the following proposition.
\begin{proposition}\label{prop:HarFct}
    Consider a bounded and connected domain $\Omega\subset\xR^2$ with Lipschitz boundary having finitely many connected components. Let $N$ be a positive integer and $\Sigma$ be a nonempty relatively open subset of $\partial\Omega$. Then, for every $(X_1,...,X_N)\in \mathcal{C}_N(\Omega)$ (see~\eqref{eq:ConfigSet}) and arbitrary vectors $v_1,...,v_N\in\xR^2$, there exists $f\in C_c^\infty(\Sigma)$ whose real-valued harmonic extension $\Phi$ satisfies
    $$
        \nabla \Phi(X_j) = v_j, \quad \forall j=1,...N.
    $$
    Moreover, when $X_j$'s and $v_j$'s depend smoothly on some parameter $t\in [0,T]\subset \xR$ and the points remain mutually separated and in the interior, the boundary datum $f=\Phi|_{\partial\Omega}$ may be chosen in $C^\infty([0,T];C_c^\infty(\Sigma))$.
    
    The same assertions hold for a Neumann datum
    $$
        g:=n\cdot\nabla\Phi|_{\partial\Omega}\in C_{c,0}^\infty(\Sigma).
    $$
    In the parameter-dependent case, $g$ can be chosen in $C^\infty([0,T];C_{c,0}^\infty(\Sigma))$.
\end{proposition}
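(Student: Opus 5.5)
Fix $(X_1,\dots,X_N)\in\mathcal C_N(\Omega)$ and consider the linear map
$$
L: C_c^\infty(\Sigma)\to(\xR^2)^N,\qquad L f=\bigl(\nabla\Phi_f(X_1),\dots,\nabla\Phi_f(X_N)\bigr),
$$
where $\Phi_f$ is the harmonic extension of $f$. The plan is to prove that $L$ is onto. Since the target is finite-dimensional, failure of surjectivity would give a nonzero vector $(a_1,\dots,a_N)\in(\xR^2)^N$ with
$$
\sum_j a_j\cdot\nabla\Phi_f(X_j)=0\qquad\text{for all } f\in C_c^\infty(\Sigma).
$$
We encode this functional by the function
$$
w(Y)=\sum_j a_j\cdot\nabla_X G_\Omega(X_j,Y).
$$
It is a combination of dipoles: $w$ is harmonic in $\Omega\setminus\{X_1,\dots,X_N\}$, vanishes on $\partial\Omega$, and has singularities $a_j\cdot\nabla\frac{1}{2\pi}\ln|X_j-\cdot|$ at the points $X_j$. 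Green's representation $\Phi_f(X)=\int_{\partial\Omega} f\,\partial_{n_Y}G_\Omega(X,Y)\di\sigma_Y$, understood weakly on Lipschitz domains, gives
$$
\langle \partial_n w, f\rangle=0\qquad\text{for all } f\in C_c^\infty(\Sigma).
$$
Hence $w$ has zero Cauchy data on $\Sigma$. Boundary unique continuation (Proposition~\ref{prop:UniContin}) then gives $w\equiv0$ on the connected open set $\Omega\setminus\{X_j\}$. This contradicts the nonzero dipole singularity at some $X_j$ with $a_j\neq0$, because a nonzero dipole is unbounded near its pole. So $L$ is onto.

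For the Neumann case we take $L$ on $C_{c,0}^\infty(\Sigma)$, where $\Phi_g$ is now defined only up to a constant. The gradient values $\nabla\Phi_g(X_j)$ are still well defined. We replace $G_\Omega$ by the Neumann function $N_\Omega$, with $\partial_n N_\Omega(X,\cdot)=-1/|\partial\Omega|$. The annihilating condition then yields
$$
w(Y)=\sum_j a_j\cdot\nabla_X N_\Omega(X_j,Y).
$$
This $w$ has zero normal derivative on $\partial\Omega$. The condition $\langle w|_{\partial\Omega},g\rangle=0$ for all zero-mean $g$ supported in $\Sigma$ says that $w$ is constant on $\Sigma$. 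Since $\nabla_X$ kills constants in the dipole combination, the normal derivative equation remains exact. Thus $w-c$ has zero Cauchy data on $\Sigma$, and unique continuation gives the same contradiction.

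For smooth dependence on $t$, surjectivity at a fixed time $t_0$ gives finitely many data $f_1,\dots,f_{2N}$ with $Lf_k$ spanning $(\xR^2)^N$. Their supports lie in a common compact subset of $\Sigma$. Since $X_j(t)$ stays in a compact subset of $\Omega$, the map $t\mapsto\nabla\Phi_{f_k}(X_j(t))$ is smooth by interior regularity. Therefore the $2N\times2N$ matrix
$$
M(t)=[L_t f_k]_{k=1}^{2N}
$$
depends smoothly on $t$ and stays invertible on a neighborhood $I_{t_0}$ of $t_0$. Solving $M(t)\alpha(t)=v(t)$ and setting $f(t)=\sum_k\alpha_k(t)f_k$ gives a smooth solution on $I_{t_0}$.

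To globalize, we cover $[0,T]$ by finitely many such intervals $I_i$ with local solutions $f^{(i)}$. Taking a smooth partition of unity $\chi_i$ subordinate to this cover, we set
$$
f=\sum_i \chi_i f^{(i)}.
$$
This works because the constraint $L_t f=v(t)$ is affine with the same right-hand side on every piece, so convex combinations of solutions remain solutions. All data live in one finite-dimensional space with common compact support, as Remark~\ref{rmk:Distribution} requires. The Neumann case is identical, and the zero-mean condition is preserved because each $f^{(i)}$ has zero mean.

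The main obstacle is the duality step on a Lipschitz domain. We must justify that the weak conormal derivative of the Green (or Neumann) dipole combination is exactly the annihilating functional, and that "zero Cauchy data on $\Sigma$" holds in the sense Proposition~\ref{prop:UniContin} requires. The first thing to try is to work away from the poles: $w$ is in $H^1$ near $\partial\Omega$, and Green's identity can be applied on $\Omega$ minus small discs around the $X_j$, with the disc contributions computed explicitly to recover $\nabla\Phi_f(X_j)$.
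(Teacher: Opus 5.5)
Your argument is correct. For the Dirichlet datum and for the smooth dependence on $t$ it is the paper's proof: the dipole combination $\sum_j a_j\cdot\nabla_XG_\Omega(X_j,\cdot)$, boundary unique continuation on $\Omega$ punctured by small balls, the contradiction with the dipole singularity, and local inversion of $M(t)$ glued by a partition of unity as in Lemma~\ref{lem:Resol_Lin_Sys}. Your Green's formula on $\Omega$ minus small discs is a self-contained substitute for the paper's appeal to Dahlberg's identification of the Poisson kernel.

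Where you genuinely differ is the Neumann case, and your route is the sounder one. The paper runs the surjectivity argument on all of $C_c^\infty(\Sigma)$, so its annihilator condition is $W=0$ on $\Sigma$. It then subtracts the boundary mean from the resulting $\tilde g$, and that step leaves the admissible class. First, $\tilde g$ minus a constant is no longer supported in $\Sigma$ when $\Sigma\neq\partial\Omega$. Second, the asserted equality $\langle\nabla_XG^N_\Omega(X_j,\cdot),g\rangle_{\partial\Omega}=\langle\nabla_XG^N_\Omega(X_j,\cdot),\tilde g\rangle_{\partial\Omega}$ does not follow from the volume normalization $\int_\Omega G^N_\Omega(X,Y)\di Y=0$. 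For instance, in the unit disc $X\mapsto\int_{\partial\xD}G^N_{\xD}(X,Y)\di\sigma_Y$ is $-|X|^2/2$ up to a constant.

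You instead restrict $L$ to the hyperplane $C_{c,0}^\infty(\Sigma)$ from the outset. The annihilator then only forces $w$ to equal a constant $c$ on $\Sigma$, and you apply unique continuation to $w-c$. Its conormal derivative still vanishes because $\partial_{n_Y}N_\Omega(X,Y)$ is independent of $X$. This keeps the zero mean and the common compact support at once, and the partition of unity preserves both. So your argument actually delivers $g\in C^\infty([0,T];C_{c,0}^\infty(\Sigma))$ as stated.

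The argument works equally with the paper's normalization $\partial_{n_Y}G^N_\Omega=0$. One small correction: with the convention $\Delta_YN_\Omega(X,\cdot)=\delta_X$, the compatible flux normalization is $+1/|\partial\Omega|$, not $-1/|\partial\Omega|$. This is harmless, since only its independence of $X$ is used.
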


This proposition can be proved in the same spirit as Example~\ref{ex:GlobalCtrl}. We will reduce the equation $\nabla \Phi(X_j) = v_j$ to a linear system and then prove the solvability (not necessarily unique) of such system. The proof has two parts. We first prove surjectivity for each fixed configuration by showing that every linear functional annihilating the attainable gradient values must vanish. We then use a finite covering of the compact parameter set and a partition of unity to obtain a family of controls which depends smoothly on the parameter. This second step also yields one common compact support in $\Sigma$, a property needed when the configuration varies along a prescribed trajectory. In what follows, we focus on the Dirichlet condition, while the proof of the Neumann case will be sketched at the end.

\hspace{1em}

\noindent\textbf{Step 1: Reduction to linear system.} Let $f$ be the trace of $\Phi$ at the boundary $\partial\Omega$. In terms of the Poisson kernel $P(X,Y)$, Green's representation formula reads
$$
    \Phi(X)
    =\langle P(X,\cdot),f\rangle_{\partial\Omega}
    =\left\langle n\cdot\nabla_YG_\Omega(X,\cdot),f\right\rangle_{\partial\Omega}.
$$
Recall that $n$ is the outward normal direction. Since the boundary $\partial\Omega$ is Lipschitz, the identity
$$
    P(X,\cdot)=n\cdot\nabla_YG_\Omega(X,\cdot)
$$
holds in the distributional sense for $X$ in the interior of $\Omega$ (cf.~\cite[Theorem 3]{dahlberg1977harmonic} and Remark~\ref{rmk:Distribution}). In particular, we have
$$
    \left\langle (n\cdot\nabla_Y)\nabla_{X}G_\Omega(X,\cdot),f\right\rangle_{\partial\Omega}
    :=\nabla\Phi(X).
$$
The problem can thus be reduced to the following linear system for $f$,
\begin{equation}\label{eq:LinSys}
    \nabla\Phi(X_j)
    =\left\langle(n\cdot\nabla_Y)\nabla_XG_{\Omega}(X_j,\cdot),f\right\rangle_{\partial\Omega}
    =v_j,
    \qquad j=1,...,N.
\end{equation}
Here one should understand the components of $(n\cdot\nabla_Y)\nabla_X G_{\Omega}(X_j,\cdot)$ as distributions on $\partial\Omega$ and $f$ is a smooth function on $\partial\Omega$ to be determined.

\hspace{1em}

\noindent\textbf{Step 2: Solvability of the linear system.} The solvability of the linear system~\eqref{eq:LinSys} above can be summarized as the following abstract lemma:
\begin{lemma}\label{lem:Resol_Lin_Sys}
    Consider a bounded connected domain $\Omega$ and a nonempty relatively open subset $\Sigma\subset\partial\Omega$. Assume that
    \begin{itemize}
        \item $\{w_k\}_{k=1}^M$ are real-valued distributions on $\Sigma$ depending smoothly on the parameter $t\in[0,T] \subset\xR$ in the sense that
        $$
            \forall \varphi\in C^\infty_c(\Sigma), \quad t\mapsto \langle w_k(t), \varphi \rangle \text{ is smooth}.
        $$
        \item $\{c_k(t)\}_{k=1}^M$ are constants depending smoothly on the parameter $t$.
        \item $w_k(t)$'s are linearly independent for all $t\in[0,T]$. More precisely, for all $t\in [0,T]$, there holds
        $$
            \forall (\lambda_1,...\lambda_M) \in \xR^M, \quad \sum_{k=1}^M \lambda_k w_k(t) = 0 \text{ in }\mathcal{D}'(\Sigma) \Leftrightarrow (\lambda_1,...\lambda_M)=0.
        $$
    \end{itemize}
    Then there exists at least one function $f\in C^\infty([0,T];C_c^\infty(\Sigma))$, with $f(t)$ supported in $\Sigma$ for all time $t\in[0,T]$, such that
    $$
        \langle w_k(t), f(t) \rangle = c_k(t), \quad k=1,...M.
    $$
\end{lemma}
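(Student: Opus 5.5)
Fix $t_0\in[0,T]$. Since $w_1(t_0),\dots,w_M(t_0)$ are linearly independent in $\mathcal D'(\Sigma)$, the linear map
$$
\mathcal L_{t_0}:C_c^\infty(\Sigma)\to\xR^M,\qquad \varphi\mapsto\bigl(\langle w_k(t_0),\varphi\rangle\bigr)_{k=1}^M ,
$$
is surjective. Indeed, if its range were a proper subspace, some nonzero $\lambda\in\xR^M$ would annihilate it. Then $\sum_k\lambda_k w_k(t_0)$ would vanish on every test function, contradicting independence.

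We therefore choose $\varphi_1^{t_0},\dots,\varphi_M^{t_0}\in C_c^\infty(\Sigma)$ with
$$
\langle w_k(t_0),\varphi_l^{t_0}\rangle=\delta_{kl}.
$$
Consider the $M\times M$ matrix
$$
A_{t_0}(t):=\bigl(\langle w_k(t),\varphi_l^{t_0}\rangle\bigr)_{k,l}.
$$
Its entries are smooth in $t$ by the first hypothesis, and $A_{t_0}(t_0)=\Id$. So $A_{t_0}(t)$ stays invertible on a relatively open interval $I_{t_0}\ni t_0$, and $A_{t_0}(t)^{-1}$ is smooth there (Cramer's rule). On $I_{t_0}$ we define the local solution
$$
f^{t_0}(t):=\sum_{l=1}^M \beta_l(t)\varphi_l^{t_0},\qquad \beta(t):=A_{t_0}(t)^{-1}c(t).
$$
This gives $\langle w_k(t),f^{t_0}(t)\rangle=c_k(t)$ for all $k$, and $f^{t_0}$ is smooth in $t$ with values in the fixed finite-dimensional space $\operatorname{span}\{\varphi_l^{t_0}\}$.

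By compactness of $[0,T]$, finitely many intervals $I_{t_1},\dots,I_{t_p}$ cover $[0,T]$. Take a smooth partition of unity $\{\chi_i\}$ on $[0,T]$ subordinate to this cover and set
$$
f(t):=\sum_{i=1}^p\chi_i(t)f^{t_i}(t).
$$
The constraints are linear in $f$ and each $f^{t_i}$ satisfies them on $\operatorname{supp}\chi_i$, so
$$
\langle w_k(t),f(t)\rangle=\sum_i\chi_i(t)c_k(t)=c_k(t).
$$
Moreover, $f(t)$ lies in the finite-dimensional space spanned by all the $\varphi_l^{t_i}$. Hence all supports lie in one compact set $K:=\bigcup_{i,l}\Supp{\varphi_l^{t_i}}\Subset\Sigma$. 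Smoothness of the coefficients then gives $f\in C^\infty([0,T];C_c^\infty(\Sigma))$, in the sense described in Remark~\ref{rmk:Distribution}.

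The only delicate point is the first one: surjectivity of $\mathcal L_{t_0}$, which is the duality argument above and relies on the linear independence hypothesis. Everything else is finite-dimensional linear algebra plus a partition of unity. For the later application, independence itself will come from unique continuation (Proposition~\ref{prop:UniContin}), but that is not needed for this lemma.
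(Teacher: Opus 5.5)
Your proposal is correct and follows the paper's proof essentially step by step: surjectivity at a frozen time, a locally invertible matrix built from finitely many fixed test functions, and a partition of unity that gives smooth dependence and a common compact support. The only additions are your explicit annihilator argument for surjectivity, which the paper merely asserts, and the harmless normalization $A_{t_0}(t_0)=\Id$.
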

\begin{proof}
    To begin with, we freeze the parameter $t = t_0 \in [0,T]$. Consider the linear mapping
    \begin{equation*}
        C_c^\infty(\Sigma) \to \xR^M, \quad \varphi  \mapsto (\langle w_1(t_0), \varphi \rangle,...\langle w_M(t_0), \varphi \rangle),
    \end{equation*}
    which is surjective due to the assumption of linear independence. Thus, there exists at least one set of compactly supported smooth functions $\{\varphi_k\}_{k=1}^M$ such that the matrix
    $$
        M(t_0) := \Big( \langle w_k(t_0), \varphi_{k'} \rangle \Big)_{k,k'} \in \GL_M(\xR).
    $$
    Since $\GL_M(\xR)$ forms an open set in the space of matrices $\xR^{M\times M}$, there exists $\epsilon>0$ such that $M(t)$ is invertible for all $t\in [0,T] \cap ]t_0-\epsilon,t_0+\epsilon[$. Moreover, the mapping $t \mapsto M(t)^{-1}$ is smooth from $[0,T] \cap ]t_0-\epsilon,t_0+\epsilon[$ to $\GL_M(\xR)$.
    As a conclusion, for all $t$ in the small neighborhood $ [0,T] \cap ]t_0-\epsilon,t_0+\epsilon[$ of $t_0$, the desired $f(t)$ can be chosen as
    $$
        f(t) := \sum_{k=1}^M \alpha_k(t) \varphi_k, \quad (\alpha_k) := M(t)^{-1}(c_k).
    $$
    Clearly, the smooth dependence on $t$ of $M(t)^{-1}$ and the functions $c_k(t)$ implies that of $f(t)$.

    It remains to explain how to obtain the desired function $f(t)$ for all $t\in[0,T]$. Indeed, through the argument above, given any $t\in[0,T]$, we can construct the desired $f$ on a neighborhood $U(t)$ of $t$. The compactness of $[0,T]$ guarantees a finite cover $U(t_l)$ ($l=1,...L$), corresponding to smooth functions $f_l(t)$. Let us denote by $\chi_l$ a smooth partition of unity subordinate to this finite cover, chosen with $\Supp{\chi_l}\Subset U(t_l)$ relative to $[0,T]$. We extend $\chi_lf_l$ by zero outside $U(t_l)$ and define the global smooth function by
    $$ 
        f(t) := \sum_{l=1}^L \chi_l(t) f_l(t).
    $$
    A direct calculation gives
    $$
        \langle w_k(t), f(t) \rangle = \sum_{l=1}^L \chi_l(t) \langle w_k(t), f_l(t) \rangle =  \sum_{l=1}^L \chi_l(t) c_k(t) = c_k(t).
    $$
    Each $f_l$ takes values in the span of finitely many fixed test functions. Hence $f$ is smooth in $t$ and all $f(t)$ are supported in the compact union of the supports of these finitely many test functions.
\end{proof}

\hspace{1em}

\noindent\textbf{Step 3: Linear independence of the coefficients.} For every $f\in C^\infty(\partial\Omega)$, the duality pairing in~\eqref{eq:LinSys} equals $\nabla\Phi(X)$ and is therefore smooth in $X\in\Omega$ by interior regularity. Thus the coefficient distributions depend smoothly on the configuration in the sense required by Lemma~\ref{lem:Resol_Lin_Sys}. It remains to check their linear independence.

Given vectors $c_1,...c_N\in\xR^2$, we define the auxiliary distribution
$$
    V(Y) := \sum_{j=1}^N c_j\cdot\nabla_X G_{\Omega}(X_j,Y).
$$
The goal is to show that $n\cdot\nabla V$ vanishes on $\Sigma$ only if $c_j$'s are all zero. 
We first notice that $V$ is harmonic inside $\Omega$ except at the points $X_j$'s according to the definition of Green function. Since the Green function $G_\Omega$ is associated with the Dirichlet boundary condition, $G_{\Omega}(X,Y)$ equals zero for all $X\in\Omega$ and $Y\in\partial\Omega$. Thus, $V$ vanishes on the full boundary $\partial\Omega$, in particular on $\Sigma$.
Once the weak conormal derivative $n\cdot \nabla V$ also vanishes on $\Sigma$, choose pairwise disjoint closed balls around the points $X_j$, small enough that their complement in $\Omega$ is connected. The function $V$ belongs to $H^1$ on this punctured Lipschitz domain and has zero Dirichlet and conormal traces on $\Sigma$. Proposition~\ref{prop:UniContin} shows that $V$ vanishes there. Interior unique continuation in the connected punctured domain $\Omega\backslash\{X_1,...,X_N\}$ then shows that $V$ vanishes throughout that set. Recalling that $G_\Omega$ admits the decomposition into singular and regular parts,
$$
    G_\Omega(X,Y) = \frac{1}{2\pi}\ln|X-Y| + \tilde{R}_\Omega(X,Y).
$$
Then, near $X_j$, the behavior of $V$ is governed by its singular part:
$$
    V(Y) = \frac{1}{2\pi} \frac{c_j \cdot (X_j-Y)}{|X_j-Y|^2} + O(1), \quad \text{as }Y\to X_j.
$$
Since $V$ vanishes near $X_j$, one has immediately that $c_j=0$. This proves the desired linear independence. Lemma~\ref{lem:Resol_Lin_Sys} therefore proves the Dirichlet part of Proposition~\ref{prop:HarFct}, including the asserted smooth dependence.

\hspace{1em}

\noindent\textbf{Sketch of proof: Neumann case.} Since the proof of this proposition is almost identical to the Dirichlet case, we only highlight the main differences for the sake of simplicity.
We fix the additive constant in the potential by imposing
$$
    \int_\Omega\Phi\di X=0.
$$
The representation~\eqref{eq:LinSys} of the harmonic function should be replaced by
$$
    \nabla \Phi(X_j) = -\left\langle\nabla_X G_\Omega^N(X_j,\cdot),g\right\rangle_{\partial\Omega} = v_j, \quad j=1,...,N,
$$
where $g\in C^\infty_{c,0}(\Sigma)$ and $G_\Omega^N$ is the normalized Neumann Green function, defined by
$$
    \begin{aligned}
        &\Delta_Y G_\Omega^N(X,Y)=\delta_X(Y)-\frac{1}{|\Omega|},
        \qquad n\cdot\nabla_YG_\Omega^N(X,Y)|_{Y\in\partial\Omega}=0,\\
        &\int_\Omega G_\Omega^N(X,Y)\di Y=0.
    \end{aligned}
$$
Recall the space $C^\infty_{c,0}(\Sigma)$ defined in~\eqref{eq:SmoothCl_LocalBdy_ZeroMean}. In order to apply Lemma~\ref{lem:Resol_Lin_Sys}, we turn to consider the linear combination 
$$
    W(Y) := \sum_{j=1}^N c_j \cdot\nabla_X G^N_\Omega(X_j,Y).
$$
The function $W$ is harmonic away from the points $X_j$'s. And its normal derivative vanishes on the full boundary:
$$
    n\cdot\nabla W = \sum_{j=1}^N (c_j \cdot\nabla_X) (n\cdot\nabla_Y) G^N_\Omega(X_j,Y) = 0.
$$
If $W$ vanishes on $\Sigma$, the zero-extension argument in the proof of Proposition~\ref{prop:UniContin}, applied to $W$, shows that $W=0$ in an interior neighborhood of this boundary patch. Interior unique continuation gives $W=0$ except at $X_j$'s. Note that the normalized Neumann Green function has the same local logarithmic singularity as $G_\Omega$. The singularity at $X_j$ then forces $c_j=0$ for every $j$. 

An application of Lemma~\ref{lem:Resol_Lin_Sys} then gives a smooth function $\tilde{g} \in C^\infty([0,T];C^\infty_c(\Sigma))$ whose mean value on $\partial\Omega$ is not necessarily zero. To deal with this issue, we turn to consider the function
$$
    g(t) : \tilde{g}(t) - \int_{\partial\Omega}\tilde{g}(t)\di\sigma,
$$
which belongs to the zero-mean space $C^\infty([0,T];C_{c,0}^\infty(\Sigma))$. Then the zero-mean assumption on $G^\Neu_\Omega$ gives the desired equality
$$
    -\left\langle\nabla_X G_\Omega^N(X_j,\cdot),g(t)\right\rangle_{\partial\Omega} = -\left\langle\nabla_X G_\Omega^N(X_j,\cdot),\tilde{g}(t)\right\rangle_{\partial\Omega} = v_j.
$$

\subsection{Proof of active controllability}
We now return to the point-vortex dynamics. Along the prescribed trajectory, the difference between the desired velocity and the uncontrolled point-vortex drift determines the gradient values that the control potential must realize. Since the trajectory remains in a compact subset of the collision-free configuration space, these target values depend smoothly on time and the interpolation points remain uniformly distinct and uniformly separated from the boundary.
For $1\leq j\leq N$, define
\begin{equation}\label{eq:ActiveTargetVector}
    \begin{aligned}
        v_j(t):={}&\dot{\Gamma}_j(t)
        -\sum_{k \neq j}\kappa_k\nabla_X^\perp G_\Omega(\Gamma_j(t),\Gamma_k(t))\\
        &-\frac{\kappa_j}{2}\nabla^\perp R_\Omega(\Gamma_j(t))
        -\sum_{r=1}^m c_r(\bm\Gamma(t))\nabla^\perp W_r(\Gamma_j(t)).
    \end{aligned}
\end{equation}
Using uniqueness in the Dirichlet case, and uniqueness of the gradient in the Neumann case, it suffices to find a harmonic function $\Phi = \Phi(t,X)$ such that
$$
    \nabla\Phi(t,\Gamma_j(t))=v_j(t),
    \qquad 1\leq j\leq N.
$$
This is precisely the harmonic interpolation problem solved by Proposition~\ref{prop:HarFct}.

We now complete the proof of Theorem~\ref{thm:ActCtrl}. The vectors $v_j$ defined in~\eqref{eq:ActiveTargetVector} depend smoothly on $t$ because the prescribed path remains in a compact subset of $\mathcal C_N(\Omega)$. Applying Proposition~\ref{prop:HarFct} to $\Gamma_j(t)$ and $v_j(t)$ gives, respectively, the asserted Dirichlet and Neumann controls, with one common compact support in $\Sigma$. Substitution in~\eqref{eq:KR} shows that $\bm X=\bm\Gamma$, and proves the theorem.

\section{Harmonic approximation and Runge theory}\label{sect:HarmonicApproximation}

The construction of stationary controls requires a different mechanism. A single time-independent harmonic potential must generate the prescribed motion along an entire arc, and therefore the values of its gradient cannot be chosen independently at different times. We bridge this difficulty in two stages. First, analytic Cauchy data are realized by a harmonic function in a neighborhood of each prescribed arc. Second, localized Runge approximation replaces these local functions by global harmonic potentials whose boundary data are supported in $\Sigma$. Together with a perturbative argument, we can correct this approximation and obtain an exact control.

\subsection{Local harmonic realization of analytic arcs}

The local construction is geometric. Along a regular analytic arc, the tangential component of the desired gradient determines the trace of the potential up to an additive constant, while the normal component determines its normal derivative. The arc is noncharacteristic for the Laplace operator, so the Cauchy-Kovalevskaya theorem produces a harmonic potential realizing both components. Note that the use of arcs, rather than closed curves, avoids an additional period condition.

\begin{lemma}[Local solvability]\label{lem:CurveData_HarFct}
    Let $\Gamma:[0,1]\to\xR^2$ be a regular embedded real-analytic arc and let $V$ be a real-analytic vector field along this arc. Then there exists a harmonic function $H$ defined in a neighborhood of $\Gamma([0,1])$ such that
    $$
        \nabla H(\Gamma(t)) = V(\Gamma(t)), \quad \forall t\in[0,1].
    $$
    Moreover, such a harmonic function is unique up to a constant after the neighborhood is chosen connected and sufficiently thin.
\end{lemma}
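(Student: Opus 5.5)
The plan is to pass to complex variables and reduce the statement to the holomorphic extension of a real-analytic function along an analytic arc, rather than invoking Cauchy--Kovalevskaya directly. Identify $\xR^2$ with $\xC$, write $V=(V_1,V_2)$, and look for a holomorphic $F$ near the arc with $\RE F=H$. Then $F'=\partial_xH-i\partial_yH$, so the condition $\nabla H(\Gamma(t))=V(\Gamma(t))$ is equivalent to
\[
    F'(\Gamma(t)) = w(t):=V_1(\Gamma(t))-iV_2(\Gamma(t)),\qquad t\in[0,1].
\]
By the paper's convention, $\Gamma$ and $w$ extend real-analytically to an open interval $I\supset[0,1]$.

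\textbf{Step 1: complexify.} Expanding in power series, $\Gamma$ and $w$ extend holomorphically to a neighborhood $D\subset\xC$ of $I$, giving $\Gamma:D\to\xC$ and $w:D\to\xC$. Regularity means $\Gamma'(t)\neq0$ on $[0,1]$, so $\Gamma$ is locally biholomorphic near each point of $[0,1]$. Injectivity of $\Gamma$ on $[0,1]$, together with a standard compactness argument, shows that $\Gamma$ is injective on a thin neighborhood $D_\delta=\{s:\dist(s,[0,1])<\delta\}$ for $\delta$ small. This compactness step is the main technical point: one argues by contradiction with sequences $s_n\neq s_n'$, $\Gamma(s_n)=\Gamma(s_n')$, converging to points of $[0,1]$. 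Injectivity on $[0,1]$ forces the two limits to coincide, and local injectivity near that limit then gives a contradiction. Hence $\Gamma:D_\delta\to U:=\Gamma(D_\delta)$ is a biholomorphism onto an open neighborhood $U$ of $\Gamma([0,1])$.

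\textbf{Step 2: construct $F$.} Define $g:=w\circ\Gamma^{-1}$ on $U$. It is holomorphic, and $U$ is simply connected because it is biholomorphic to the convex set $D_\delta$. So $g$ has a primitive $F$ on $U$, and $H:=\RE F$ is harmonic with $\nabla H=\overline{F'}$ in the vector sense. At $z=\Gamma(t)$ we get $F'(z)=w(t)$, which gives the required identity. The construction is equivalent to solving the Cauchy problem with analytic data along the noncharacteristic arc, as in the paper's heuristic.

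\textbf{Step 3: uniqueness.} Suppose $H_1,H_2$ are harmonic on a connected neighborhood $U'$ of the arc, and both satisfy the condition. Then $F_1'-F_2'$ is holomorphic on $U'$, which is locally where conjugates exist; more directly, $\partial_xh-i\partial_yh$ is holomorphic for $h=H_1-H_2$. It vanishes on the arc $\Gamma([0,1])$, which has accumulation points. By the identity theorem on the connected set $U'$, $\nabla h\equiv0$, so $h$ is constant. Connectedness is the only requirement here; thinness just ensures such a neighborhood lies inside the domain of definition.
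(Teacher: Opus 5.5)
Your proof is correct, and it takes a different route from the paper. The paper poses the Cauchy problem along the slightly extended arc, with trace $C+\int_0^t\dot\Gamma\cdot V$ and normal derivative $n\cdot V$. It invokes Cauchy--Kovalevskaya locally, since the arc is noncharacteristic, glues the local solutions by uniqueness, and obtains uniqueness up to $C$ from the same Cauchy uniqueness.

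You instead straighten the arc with the holomorphic extension of the parametrization. Your compactness argument makes it a biholomorphism from the convex set $D_\delta$ onto a simply connected neighborhood $U$ of $\Gamma([0,1])$. You then take the explicit primitive $F(\Gamma(s))=\int_0^s w(\sigma)\Gamma'(\sigma)\,d\sigma$. For real $s$, its real part is exactly the paper's Dirichlet datum $\int_0^s\dot\Gamma\cdot V$, since $\RE\bigl((V_1-iV_2)(\dot\gamma_1+i\dot\gamma_2)\bigr)=V\cdot\dot\Gamma$. Uniqueness then comes from the identity theorem applied to the holomorphic function $\partial_xh-i\partial_yh$.

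What your route buys:
\begin{itemize}
\item It is elementary and explicit, and it produces a concrete neighborhood on which $H$ is defined.
\item Your injectivity step on $D_\delta$ makes precise the thinness that the paper's phrase ``glue along the compact arc'' leaves implicit. Local patches centred at distant parameter values must not overlap away from the shared portion of the arc.
\item It shows transparently why non-closedness matters: $U$ is simply connected, so there is no period obstruction to the primitive.
\item It shows that uniqueness needs only connectedness of the neighborhood, not thinness.
\end{itemize}

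What the paper's route buys: it is shorter, and it carries over unchanged to elliptic operators with analytic coefficients, or to analytic hypersurfaces in higher dimensions, where no complex primitive is available.
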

\begin{proof}
    If such a harmonic function exists, there must hold
    $$
        \frac{\di}{\di t} H(\Gamma(t)) = \dot{\Gamma}(t) \cdot \nabla H(\Gamma(t)) = \dot{\Gamma}(t) \cdot V(\Gamma(t)).
    $$
    Extend the analytic arc and the analytic data slightly beyond both endpoints, and choose an analytic unit normal $n$ along the extended arc. The desired function $H$ must solve the following Cauchy problem in a small neighborhood of $\Gamma$,
    \begin{equation*}
        \left\{\begin{aligned}
            &\Delta H = 0, \\
            &H(\Gamma(t)) = C + \int_0^t \dot{\Gamma}(t') \cdot V(\Gamma(t')) \di t', \\
            &n\cdot\nabla H(\Gamma(t)) = n\cdot V(\Gamma(t)),
        \end{aligned}\right.
    \end{equation*}
    where $C\in\xR$ is arbitrary. Since the arc is noncharacteristic for the Laplace operator and all the data are analytic, the Cauchy-Kovalevskaya theorem gives a local solution. The solutions obtained in overlapping charts agree by uniqueness, and therefore glue along the compact arc. This also proves uniqueness up to the constant $C$, since $\Gamma([0,1])$ is not a closed curve.
\end{proof}

\subsection{Localized Runge approximation}\label{sec:Runge}

Runge approximation transfers the preceding local construction to the admissible global control class. Roughly speaking, the Runge approximation theorem states that locally defined harmonic functions can be approximated by ones defined in a larger domain. The localization requirement is essential to the stationary control problem: the approximating potentials must be generated by Dirichlet or compatible Neumann data supported in the prescribed set $\Sigma$, rather than by arbitrary boundary data.

\begin{definition}\label{def:Runge}
    Consider a bounded domain $\Omega$ and an open set $\Omega_0\Subset\Omega$, such that
    $$
        \Omega\backslash\overline{\Omega_0}\text{ is connected},
    $$
    and a class 
    $$
        S \subseteq \{ U\in H^1(\Omega): \Delta U = 0 \text{ in $\Omega$} \}
    $$
    of harmonic functions in $H^1(\Omega)$. We say that $S$ has the \textit{Runge approximation property} in $\Omega_0$ if, for every function $U$ harmonic in $\Omega_0$, every compact set $K\Subset\Omega_0$, every $k\in\xN$ and every $\epsilon>0$, there exists $U^\epsilon \in S$ such that
    $$
        \| U - U^\epsilon \|_{C^k(K)} < \epsilon.
    $$
\end{definition}

\begin{remark}
    Among many references, Runge approximation theorem is stated for $L^\infty(K)$ norm or $L^2(K)$ norm. We can deduce the $C^k$ approximation above through the interior regularity of harmonic functions (see for example Section 6.3 of~\cite{evans2010pde}).
\end{remark}

The classical Runge approximation theorem states that the class of harmonic functions has the Runge approximation property above. In a recent paper of Lassas, Liimatainen, and Salo~\cite{LassasLiimatainenSalo}, the Runge approximation property is proved for harmonic functions with boundary data supported in a prescribed open subset of the boundary, which we recall here:
\begin{proposition}[Proposition A.2 of~\cite{LassasLiimatainenSalo}]
    Let $\Omega\subset\mathbb{R}^2$ be a bounded connected domain with sufficiently smooth boundary, let $\Omega_0\Subset\Omega$ be an open set with finitely many connected components having sufficiently smooth boundaries such that $\Omega\backslash\overline{\Omega_0}$ is connected, and let $\Sigma\subset\partial\Omega$ be a nonempty relatively open subset. Then the class
    $$
        \{ U\in H^1(\Omega): \Delta U = 0 \text{ in $\Omega$},\ U|_{\partial\Omega}\in C^\infty_c(\Sigma) \}
    $$
    has the Runge approximation property in $\Omega_0$.
\end{proposition}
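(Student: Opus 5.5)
The proof is by duality: I will show that every functional annihilating the localized class must annihilate all functions harmonic near $\overline{\Omega_0}$, and then use the classical Runge theorem.

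First, reduce to the $L^2(K)$ topology. By interior estimates for harmonic functions (Section 6.3 of~\cite{evans2010pde}), $C^k(K)$ approximation follows from $L^2(K')$ approximation on a slightly larger compact set $K\Subset K'\Subset\Omega_0$. We may choose $K'$ with smooth boundary, finitely many components, and $\Omega\setminus K'$ connected. Next, use the classical Runge theorem: functions harmonic in a neighborhood of $\overline\Omega$, or even in all of $\Omega$, are dense in $L^2(K')$ among functions harmonic near $K'$. This holds because $\Omega\setminus K'$ is connected. It therefore suffices to approximate every $U\in H^1(\Omega)$ harmonic in $\Omega$ by elements of
\[
S_\Sigma=\{U\in H^1(\Omega):\ \Delta U=0,\ U|_{\partial\Omega}\in C_c^\infty(\Sigma)\}
\]
in $L^2(K')$.

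By Hahn--Banach, let $h\in L^2(K')$, extended by zero to $\Omega$, satisfy $\int_{K'}hU=0$ for all $U\in S_\Sigma$. Solve $\Delta w=h$ in $\Omega$ with $w=0$ on $\partial\Omega$. Green's identity gives, for $U\in S_\Sigma$,
\[
0=\int_\Omega hU=\int_\Omega(\Delta w\,U-w\,\Delta U)=\int_{\partial\Omega}(n\cdot\nabla w)\,U\,d\sigma .
\]
Since the traces $U|_{\partial\Omega}$ range over $C_c^\infty(\Sigma)$, we get $n\cdot\nabla w=0$ on $\Sigma$, while also $w=0$ there. On the connected set $\Omega\setminus K'$ the function $w$ is harmonic with vanishing Cauchy data on $\Sigma$. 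The boundary unique continuation result, Proposition~\ref{prop:UniContin}, together with interior unique continuation, then gives $w\equiv0$ on $\Omega\setminus K'$.

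Hence $w$ and $\nabla w$ have zero traces on $\partial K'$. For any $U$ harmonic in a neighborhood of $\overline{\Omega}$, Green's identity on $K'$ yields
\[
\int_{K'}hU=\int_{K'}\Delta w\,U=\int_{\partial K'}(\partial_n w\,U-w\,\partial_nU)=0 .
\]
So $h$ annihilates all globally harmonic functions. Combined with the classical density, this gives density of $S_\Sigma$ in the required sense.

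The main obstacle is the unique continuation step from partial Cauchy data on $\Sigma$. That step needs only enough boundary regularity for $n\cdot\nabla w$ to make sense, and it is exactly what Proposition~\ref{prop:UniContin} supplies. A secondary technical point is justifying the boundary Green identity for $w\in H^1_0$ with $\Delta w\in L^2$. Smooth boundaries give $w\in H^2$; in the Lipschitz setting one works with the weak conormal derivative in $\mathcal D'(\Sigma)$, as in Remark~\ref{rmk:Distribution}.
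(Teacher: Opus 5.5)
The paper only cites this result, but your argument is correct and is exactly the duality scheme it uses for the refined Theorem~\ref{thm:Runge}: zero-extended annihilator, Dirichlet adjoint problem, vanishing Cauchy data on $\Sigma$, boundary unique continuation through the connected exterior, and Green's identity across the interface. Here it is specialized to the unconstrained class, so no harmonic-measure corrections $\sum_j\lambda_jW_j$ arise. One remark: your detour through the classical Runge theorem is superfluous, since your final Green identity on $K'$ already holds for every $U$ harmonic in a neighborhood of $K'$, so $h$ annihilates all such functions directly.
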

We also refer to another paper of R\"{u}land and Salo~\cite{ruland2019runge}, where the authors proved the same result for second-order elliptic equations. 

For our purpose, we need to approximate the locally defined harmonic functions by those globally defined in $\Omega$ with (1) boundary data supported in a prescribed relatively open subset $\Sigma$ of the boundary $\partial\Omega$, (2) a single-valued harmonic conjugate. This refinement can actually be proved by a similar argument as in~\cite{LassasLiimatainenSalo}. Before presenting the proof, we recall the boundary condition for the existence of single-valued harmonic conjugate. 

For $(m+1)$-connected domain $\Omega$ satisfying hypothesis~\eqref{hyp:Domain}, recall the harmonic measures $W_j$ introduced in Section~\ref{sect:Intro},
\begin{equation*}
    \Delta W_j = 0, \quad \text{in } \Omega; \qquad W_j|_{\Sigma_j} = 1, \quad W_j|_{\partial\Omega\backslash\Sigma_j} = 0.
\end{equation*}
We have the identity
\begin{equation}\label{eq:HarMes_Unity}
    \sum_{j=0}^m W_j \equiv 1.
\end{equation}
The following lemma provides an equivalent characterization of the existence of a harmonic conjugate.
\begin{lemma}\label{lem:Exact}
    Consider a domain $\Omega\subset\xR^2$ satisfying hypothesis~\eqref{hyp:Domain}. Then, for an arbitrary harmonic function $\Phi \in H^1(\Omega)$, the following assertions are equivalent:
    \begin{enumerate}[label=(\arabic*)]
        \item $\Phi$ admits a harmonic conjugate in $H^1(\Omega)$;
        \item There holds
        \begin{equation*}
            \left\langle n\cdot\nabla \Phi|_{\partial\Omega}, W_j|_{\partial\Omega} \right\rangle_{\partial\Omega}=\int_{\Sigma_j}n\cdot\nabla\Phi\di\sigma = 0, \quad j=1,...m;
        \end{equation*}
        \item There holds 
        \begin{equation*}
            \left\langle n\cdot\nabla W_j|_{\partial\Omega}, \Phi|_{\partial\Omega}\right\rangle_{\partial\Omega} = \int_{\partial\Omega} \Phi n\cdot\nabla W_j\di\sigma =0, \quad j=1,...m.
        \end{equation*}
    \end{enumerate}
    Here $\langle\cdot,\cdot\rangle_{\partial\Omega}$ denotes the $H^{-1/2}(\partial\Omega)$-$H^{1/2}(\partial\Omega)$ duality pairings. Note that, from the identity~\eqref{eq:HarMes_Unity}, the above identities hold automatically with $j=0$ if they are true for $j=1,...m$.
\end{lemma}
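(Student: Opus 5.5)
The plan is to prove (1)$\Leftrightarrow$(2) by the standard period criterion and (2)$\Leftrightarrow$(3) by Green's second identity. Throughout we use the $H^1$ weak formulation, so that the normal traces are understood in $H^{-1/2}(\partial\Omega)$ and pair against $H^{1/2}$ traces.

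For (1)$\Leftrightarrow$(2), a harmonic conjugate $\Psi$ of $\Phi$ must satisfy $\nabla\Psi=\nabla^\perp\Phi$, up to the sign convention. The field $\nabla^\perp\Phi$ is curl-free in $\Omega$ because $\Delta\Phi=0$. Hence $\Psi$ exists as a single-valued function if and only if all periods of the closed form $-\partial_y\Phi\,dx+\partial_x\Phi\,dy$ vanish. Since $\Omega$ is $(m+1)$-connected, its first homology is generated by loops around $\Sigma_1,\dots,\Sigma_m$. The period around $\Sigma_j$ equals the flux $\int_{\Sigma_j}n\cdot\nabla\Phi\,d\sigma$, up to orientation.

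To avoid using pointwise traces on a Lipschitz boundary, we define this flux weakly as $\int_\Omega\nabla\Phi\cdot\nabla W_j\,dX=\langle n\cdot\nabla\Phi,W_j\rangle_{\partial\Omega}$. We then check that it equals the period of $\nabla^\perp\Phi$ over a smooth interior loop homologous to $\Sigma_j$. To do this, take a cutoff $\chi$ equal to $1$ near $\Sigma_j$ and $0$ near the other components. Since $W_j-\chi\in H^1_0$, the integral $\int\nabla\Phi\cdot\nabla\chi$ equals $\int\nabla\Phi\cdot\nabla W_j$. Choosing $\chi$ with a sharp transition across a smooth loop then gives the line integral, by the divergence theorem in the interior, where $\Phi$ is smooth. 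Given vanishing periods, define $\Psi$ by path integration. Then $\Psi$ is harmonic and $|\nabla\Psi|=|\nabla\Phi|\in L^2$. Boundedness of $\Omega$ gives $\Psi\in L^2$ via Poincaré on a Lipschitz domain, so $\Psi\in H^1$. Conversely, if $\Psi\in H^1$ exists, every period vanishes.

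For (2)$\Leftrightarrow$(3), both pairings equal $\int_\Omega\nabla\Phi\cdot\nabla W_j\,dX$. We have $\langle n\cdot\nabla\Phi,W_j\rangle=\int\nabla\Phi\cdot\nabla W_j+\int W_j\Delta\Phi$, and the last term vanishes. Symmetrically, $\langle n\cdot\nabla W_j,\Phi\rangle=\int\nabla W_j\cdot\nabla\Phi$. So the two conditions coincide. The statement for $j=0$ follows from \eqref{eq:HarMes_Unity}: summing over $j$ gives $\sum_j\langle n\cdot\nabla\Phi,W_j\rangle=\langle n\cdot\nabla\Phi,1\rangle=\int_\Omega\nabla\Phi\cdot\nabla 1=0$.

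The main obstacle is the identification, under only Lipschitz regularity, of the weak flux $\langle n\cdot\nabla\Phi,W_j\rangle$ with the period on an interior loop. The cutoff argument above handles this. We fix a smooth Jordan curve in $\Omega$ separating $\Sigma_j$ from the other components and let $\chi$ be a smoothing of its indicator. Then $\int\nabla\Phi\cdot\nabla\chi$ is independent of the smoothing, because $\Phi$ is harmonic in the interior. Taking the limit yields the line integral of $n\cdot\nabla\Phi$ along the curve.
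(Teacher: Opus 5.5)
The paper states this lemma without proof; it is only used, in the proof of Theorem~\ref{thm:Runge}, to replace the harmonic-conjugate condition in $S^\Dir$ and $S^\Neu$ by the boundary conditions defining $S^\Dir_\bdy$ and $S^\Neu_\bdy$. There is therefore no argument of the authors to compare against. Your proof is correct and supplies the missing justification.

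Your observation that (2) and (3) are the same number $\int_\Omega\nabla\Phi\cdot\nabla W_j\di X$ is the cleanest route to (2)$\Leftrightarrow$(3). Both conormal derivatives of $H^1$ harmonic functions are defined by Green's first identity, so nothing more is needed. The cutoff argument also works: $W_j-\chi\in H^1_0(\Omega)$ when $\chi$ is a smoothed indicator of the region between $\Sigma_j$ and an interior Jordan curve $\gamma_j$. Since $\Phi$ is weakly harmonic, $\int\nabla\Phi\cdot\nabla\chi$ does not depend on the smoothing. Together these identify the weak flux with the period of $\nabla^\perp\Phi$ along $\gamma_j$ using only the Lipschitz trace theory. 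Since the $\gamma_j$ generate $H_1(\Omega)$, the period criterion then gives (1)$\Leftrightarrow$(2). The $j=0$ remark follows from $\langle n\cdot\nabla\Phi,1\rangle=0$.

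One step needs one more line. The function $\Psi$ obtained by path integration is not a priori in $L^2(\Omega)$, so the Poincar\'e--Wirtinger inequality for $H^1$ functions cannot be applied to it directly. You can cite the fact that on a bounded Lipschitz domain $\nabla u\in L^2$ implies $u\in L^2$ (Deny--Lions, Ne\v{c}as). Alternatively, truncate as follows:
\begin{itemize}
\item Set $\Psi_k:=\max(-k,\min(k,\Psi))$; then $\Psi_k\in H^1(\Omega)$ and $|\nabla\Psi_k|\le|\nabla\Phi|$.
\item Apply Poincar\'e with the mean taken over a fixed ball $B\Subset\Omega$; for $k$ large, $\Psi_k=\Psi$ on $B$, so this mean is independent of $k$.
\item Let $k\to\infty$ by monotone convergence.
\end{itemize}
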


We now state the localized Runge approximation theorem used in the proof of Theorem~\ref{thm:PassCtrl}.
\begin{theorem}\label{thm:Runge}
    Let $\Omega\subset\xR^2$ be a domain satisfying hypothesis~\eqref{hyp:Domain}. Let $\Omega_0\Subset\Omega$ be such a domain with $\Omega\backslash\overline{\Omega_0}$ connected, and let $\Sigma\subset\partial\Omega$ be a nonempty relatively open subset. Then the following classes
    \begin{align}
        S^\Dir :=& \left\{U\in H^1(\Omega)\left|\ 
            \begin{aligned}
                &\Delta U=0\text{ in }\Omega,\ U|_{\partial\Omega}\in C_c^\infty(\Sigma), \\
                &\text{$U$ admits a harmonic conjugate in $H^1(\Omega)$}
            \end{aligned}\right.
            \right\}, \label{eq:RungeClDir} \\
        S^\Neu :=& \left\{ U\in H^1(\Omega)           \left|\ \begin{aligned}
                &\Delta U = 0 \text{ in }\Omega,\ n\cdot\nabla U|_{\partial\Omega}\in C_{c,0}^\infty(\Sigma),\\
                &\text{$U$ admits a harmonic conjugate in $H^1(\Omega)$}
            \end{aligned}\right.\right\} \label{eq:RungeClNeu}
    \end{align}
    both have the Runge approximation property in $\Omega_0$ (see Definition~\ref{def:Runge}).
\end{theorem}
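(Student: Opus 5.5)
The plan is a Hahn--Banach duality argument, as in the proof of Proposition A.2 of \cite{LassasLiimatainenSalo}, with the harmonic-conjugate constraint added as finitely many linear conditions. We first reduce to $L^2$ approximation. By interior regularity of harmonic functions, it suffices to approximate on a slightly larger compact set: for $U$ harmonic in $\Omega_0$ and $K\Subset\Omega_0$, it is enough to approximate $U$ in $L^2(K')$ with $K\Subset K'\Subset\Omega_0$. By the classical Runge theorem (using that $\Omega\setminus\overline{\Omega_0}$ is connected), we may also assume $U$ is harmonic on a neighborhood of $\overline{\Omega}$, or at least on a smoothly bounded open set $D$ with $K'\Subset D\Subset\Omega$ and $\Omega\setminus\overline D$ connected. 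It then suffices to show that the restrictions $S^\Dir|_{D}$ (resp. $S^\Neu|_D$) are dense in the space $\mathcal H(D)$ of $L^2(D)$ harmonic functions. By Hahn--Banach, this holds if every $h\in L^2(D)$ that annihilates the class also annihilates all harmonic functions on a neighborhood of $\overline D$.

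In the Dirichlet case, let $h\in L^2(D)$, extended by zero, satisfy $\int_D hU=0$ for all $U\in S^\Dir$. Let $w\in H^1_0(\Omega)$ solve $\Delta w=h$, so that $w$ is harmonic in $\Omega\setminus\overline D$. For $U$ harmonic with trace $f$, Green's formula gives $\int_D hU=\langle n\cdot\nabla w,f\rangle_{\partial\Omega}$. The class $S^\Dir$ consists of all $f\in C_c^\infty(\Sigma)$ subject to the $m$ linear constraints $\langle n\cdot\nabla W_j,f\rangle=0$ from Lemma~\ref{lem:Exact}(3). Hence the annihilation hypothesis says that the distribution $n\cdot\nabla w|_\Sigma$ lies in the span of $\{n\cdot\nabla W_j|_\Sigma\}_{j=1}^m$ as functionals on $C_c^\infty(\Sigma)$; restricted to the constrained subspace it vanishes, and a finite-codimension linear algebra argument yields constants $a_j$ with $n\cdot\nabla(w-\sum a_jW_j)=0$ on $\Sigma$. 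Set $\tilde w:=w-\sum_j a_jW_j$. This function is harmonic in $\Omega\setminus\overline D$, has locally constant trace ($-a_j$ on $\Sigma_j$), hence constant trace $c$ on the portion of $\Sigma$ in one boundary component, and zero conormal derivative on $\Sigma$. Applying Proposition~\ref{prop:UniContin} to $\tilde w-c$ near a piece of $\Sigma$, and then interior unique continuation in the connected set $\Omega\setminus\overline D$, gives $\tilde w\equiv c$ there. Hence $\tilde w-c\in H^1$ vanishes with its normal derivative on $\partial D$ from outside, and $\Delta(\tilde w - c)=h$ in $D$. For any $V$ harmonic near $\overline D$, Green's identity on $D$ then gives $\int_D hV=\int_{\partial D}\big(V\partial_n(\tilde w-c)-(\tilde w-c)\partial_n V\big)=0$, which is the required annihilation.

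For the Neumann case, we repeat the argument with $w$ solving a Neumann problem for $\Delta w=h-\frac{1}{|\Omega|}\int h$. The constraints now are $\int g=0$ and $\int_{\Sigma_j} g=0$, i.e.\ pairings with $W_j$. The annihilation then yields $w|_\Sigma\in\operatorname{span}\{1,W_j|_\Sigma\}$, which is locally constant, and the same unique continuation argument gives $\tilde w$ constant outside $D$. The term $\int h$ has to be handled by testing against constants; constants belong to $S^\Neu$ (take $g=0$), so $\int_D h=0$ and this is harmless.

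The main obstacle is the functional-analytic step in the second paragraph: turning ``annihilates the constrained subspace'' into ``equals a combination of $n\cdot\nabla W_j$ on $\Sigma$''. This is exactly where the constraint functionals must be shown to be given by the same $W_j$ whose constancy on boundary components is preserved, and where low regularity of Lipschitz traces (distributional conormal derivatives) has to be handled carefully. The reduction to $D$ with smooth boundary is what lets us avoid the smoothness hypotheses of \cite{LassasLiimatainenSalo} on $\partial\Omega$.
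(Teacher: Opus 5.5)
Your proposal is correct and follows essentially the same duality argument as the paper. Both solve an adjoint Poisson problem with source in $\Omega_0$, read off from orthogonality that the relevant Cauchy datum on $\Sigma$ is a combination of harmonic-measure data, propagate this by boundary unique continuation through the connected exterior, and conclude with Green's identity on the inner boundary. Your differences only tidy up points the paper leaves implicit: you work on a smooth $D$ to justify that Green identity, you use the component constant $c$ instead of assuming $\Sigma\cap\Sigma_0\neq\varnothing$, and you check $\int h=0$ by testing against constants in $S^\Neu$, which is needed for the Neumann adjoint problem to be solvable and which the paper does not verify.
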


The proof is based on duality. Starting from a function orthogonal to all restrictions of admissible global potentials, we solve an adjoint Poisson problem with source supported in $\Omega_0$. Orthogonality forces one part of the Cauchy data of the adjoint solution to vanish, or to be constant in the Neumann case, on $\Sigma$. Boundary unique continuation then propagates this information through the connected set $\Omega\backslash\overline{\Omega_0}$. Green's formula across the interface finally shows that the original function is orthogonal to every locally harmonic target. This gives density in $L^2(\Omega_0)$, and interior elliptic estimates upgrade the approximation to every $C^k$ norm on compact subsets. The boundary unique-continuation result invoked in this argument is Proposition~\ref{prop:UniContin} in Appendix~\ref{app:Tech}.

\begin{proof}
    By interior estimates, it suffices to prove density of $S^\Dir$ and $S^\Neu$ in
    $$
        S_0:=\{U\in H^1(\Omega_0):\Delta U=0\text{ in }\Omega_0\}
    $$
    with respect to the $L^2(\Omega_0)$ norm. Without loss of generality, we assume that $\Sigma\cap\Sigma_0\neq\varnothing$. Consequently, the functions in the class $S^\Dir$ and $S^\Neu$ can be uniquely determined by their boundary data in the class
    \begin{equation}\label{eq:Exact_Class}
        \begin{aligned}
            S_\bdy^\Dir :=& \left\{ f\in C^\infty_c(\Sigma): \int_{\partial\Omega}f n\cdot\nabla W_j \di\sigma = 0, \ j=1,...m \right\}; \\
            S_\bdy^\Neu :=& \left\{ g\in C^\infty_{c,0}(\Sigma): \int_{\partial\Omega} g W_j \di\sigma = \int_{\Sigma_j} g \di \sigma = 0, \ j=1,...m \right\}.
        \end{aligned}
    \end{equation}
    The proof is based on a duality argument. Given arbitrary function $V \in L^2(\Omega_0)$, we will show that, if $V$ is $L^2(\Omega_0)$-orthogonal to all the harmonic functions in the class $S^\Dir$ or $S^\Neu$, then $V$ is $L^2(\Omega_0)$-orthogonal to the class $S_0$. The Dirichlet and Neumann cases will be treated simultaneously. 

    \hspace{1em}

    \noindent\textbf{Orthogonality of boundary data.} To begin with, let us consider a zero extension of $V$,
    \begin{equation*}
        \tilde{V} = \left\{\begin{aligned}
            &V, && \text{in }\Omega_0, \\
            &0, && \text{in }\Omega\backslash\Omega_0,
        \end{aligned}\right.
    \end{equation*}
    and the function $\hat{V} \in H^{1}(\Omega)$ determined by 
    \begin{equation}\label{eq-Runge:DefhatV}
        \left\{\begin{aligned}
            &\Delta \hat{V} = \tilde{V}, && \text{in }\Omega, \\
            &\hat{V}|_{\partial\Omega}=0, && \text{(Dirichlet)} \\
            &n\cdot\nabla\hat{V}|_{\partial\Omega}=0. && \text{(Neumann)} \\
        \end{aligned}\right.
    \end{equation}
    Let $U$ be an arbitrary harmonic function in the class $S^\Dir$ or $S^\Neu$. By Green's formula, we have
    \begin{align*}
        0=\int_{\Omega_0} VU \di x = \int_{\Omega} \tilde{V}U \di x =& \int_{\Omega} \Delta \hat{V} U \di x \\
        =& \int_{\partial\Omega} n\cdot\nabla\hat{V} U \di \sigma - \int_{\Omega} \nabla \hat{V} \cdot \nabla U \di x \\
        =& \int_{\partial\Omega} n\cdot\nabla\hat{V} U \di \sigma - \int_{\partial\Omega} \hat{V} n\cdot\nabla U \di \sigma.
    \end{align*}
    The $L^2(\Omega_0)$-orthogonality condition on $V$ can thus be reduced to the following conditions:
    \begin{equation}\label{eq-Runge:Ortho}
        \begin{aligned}
            & \text{(Dirichlet)} && \int_{\partial\Omega} n\cdot\nabla\hat{V} f \di \sigma =0, \quad \forall f\in S_\bdy^\Dir; \\
            & \text{(Neumann)} && \int_{\partial\Omega} \hat{V} g \di \sigma =0, \quad \forall g\in S_\bdy^\Neu.
        \end{aligned}
    \end{equation}

    \hspace{1em}

    \noindent\textbf{Boundary value of $\hat{V}$.} In Dirichlet case, the orthogonality~\eqref{eq-Runge:Ortho} implies that 
    $$
        \exists (\lambda_1,...\lambda_m)\in\xR^m, \quad n\cdot\nabla\hat{V} = \sum_{j=1}^m \lambda_j n\cdot\nabla W_j,\quad \text{on }\Sigma.
    $$
    Recalling from the construction of $W_j$'s (see~\eqref{eq:HarMes}) and $\hat{V}$ (see~\eqref{eq-Runge:DefhatV}), the function $\hat{V} - \sum_{j=1}^m \lambda_j W_j$ vanishes on the component $\Sigma_0$ and is harmonic in the connected domain $\Omega\backslash\overline{\Omega}_0$. We can apply again the unique continuation property (see Proposition~\ref{prop:UniContin}) and deduce that
    $$
        \hat{V} = \sum_{j=1}^m \lambda_j W_j \quad \text{and} \quad n\cdot\nabla\hat{V} = n\cdot\nabla \left( \sum_{j=1}^m \lambda_j W_j \right) \quad \text{on }  \partial\Omega_0.
    $$
    In Neumann case, the orthogonality~\eqref{eq-Runge:Ortho} implies directly that
    $$
        \exists (\lambda_0,...\lambda_m)\in\xR^{m+1}, \quad \hat{V} = \sum_{j=0}^m \lambda_j W_j,\quad \text{on }\Sigma.
    $$
    Thus, $\hat{V}$ is constant on $\Sigma\cap\Sigma_0$, so is its normal derivative (see~\eqref{eq-Runge:DefhatV}). Again, by unique continuation property (Proposition~\ref{prop:UniContin}), we can deduce that $\hat{V}$ is constant in $\Omega\backslash\overline{\Omega}_0$.

    To summarize, in both cases, there exists a function $\hat{V}_0$ harmonic in the domain $\Omega$ such that
    $$
        \hat{V} = \hat{V}_0 \quad \text{and} \quad n\cdot\nabla\hat{V} = n\cdot\nabla \hat{V}_0 \quad \text{on }  \partial\Omega_0.
    $$

    \hspace{1em}

    \noindent\textbf{Proof of the density.}  Given any function $U$ in the class $S_0$, we compute
    \begin{align*}
        \int_{\Omega_0} VU \di x = \int_{\Omega_0} \Delta \hat{V}U \di x =& \int_{\partial\Omega_0} n\cdot\nabla\hat{V} U \di \sigma - \int_{\Omega_0} \nabla \hat{V} \cdot \nabla U \di x \\
        =& \int_{\partial\Omega_0} n\cdot\nabla\hat{V} U \di \sigma - \int_{\partial\Omega_0} \hat{V} n\cdot\nabla U \di \sigma \\
        =& \int_{\partial\Omega_0} n\cdot\nabla \hat{V}_0 U \di \sigma - \int_{\partial\Omega_0} \hat{V}_0 n\cdot\nabla U \di \sigma.
    \end{align*}
    Here we used the fact that $U$ is harmonic. Then, by reversing the calculation, we obtain
    \begin{equation*}
        \int_{\Omega_0} VU \di x = \int_{\Omega_0} \Delta \hat{V}_0 U \di x = 0,
    \end{equation*}
    since $\hat{V}_0$ is harmonic. This completes the proof.
\end{proof}
To end this part, we prove an endpoint-evaluation property of the classes $S^\Dir$ and $S^\Neu$.
\begin{proposition}\label{prop:RungeClEval}
    Let $\Omega$ and $\Sigma$ satisfy the assumptions as in Theorem~\ref{thm:Runge}, let $N\in\xN_+$, and let the $2N$ points $X_1^\initial,X_1^\final,...,X_N^\initial,X_N^\final\in\Omega$ be pairwise distinct. For $S=S^\Dir$ or $S=S^\Neu$, the linear mapping
    $$
        S \to \xR^N, \quad U \mapsto \Big( U(X_j^\final) - U(X_j^\initial) \Big)_{j=1}^N,
    $$
    is surjective.
\end{proposition}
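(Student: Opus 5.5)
The plan is to combine the Runge approximation property of Theorem~\ref{thm:Runge} with a finite-dimensional perturbation argument. The map $L:S\to\xR^N$, $U\mapsto (U(X_j^\final)-U(X_j^\initial))_j$, is linear. Its range is therefore a linear subspace of $\xR^N$, and every such subspace is closed. So it suffices to show that the range is dense, or equivalently that it contains vectors arbitrarily close to each standard basis vector $e_j$. Density of the range then gives surjectivity.

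\textbf{Step 1: a local target.} Since the $2N$ points are pairwise distinct, we can choose small closed disjoint disks $D_1^\initial, D_1^\final,\dots,D_N^\initial,D_N^\final\Subset\Omega$ centered at these points, and set $\Omega_0$ to be the union of their interiors. Then $\Omega_0\Subset\Omega$ has finitely many components with smooth boundary. The complement $\Omega\backslash\overline{\Omega_0}$ is connected, because removing finitely many small disjoint closed disks from a connected open planar set keeps it connected once the radii are small enough.

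For a fixed $j$, define a function $U_j$ on $\Omega_0$ to be equal to $1$ on $D_j^\final$ and $0$ on all other disks. This $U_j$ is locally constant, hence harmonic in $\Omega_0$. Its values satisfy $U_j(X_k^\final)-U_j(X_k^\initial)=\delta_{jk}$.

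\textbf{Step 2: approximation.} Take $K=\overline{\{X_1^\initial,\dots,X_N^\final\}}$, which is a finite set and hence compact in $\Omega_0$. By Theorem~\ref{thm:Runge}, for every $\epsilon>0$ there is $U_j^\epsilon\in S$ with $\|U_j-U_j^\epsilon\|_{C^0(K)}<\epsilon$. Consequently the matrix $A^\epsilon$ with columns $L(U_j^\epsilon)$, $j=1,\dots,N$, satisfies $|A^\epsilon-\Id|\le 2\epsilon$ entrywise. For $\epsilon$ small, $A^\epsilon$ is therefore invertible.

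\textbf{Step 3: conclusion.} Given any $c\in\xR^N$, set $\alpha=(A^\epsilon)^{-1}c$ and $U=\sum_j\alpha_jU_j^\epsilon$. Then $U$ belongs to $S$, since $S$ is a linear space, and $L(U)=c$. This proves surjectivity.

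The only step requiring any care is verifying the hypotheses of Theorem~\ref{thm:Runge}: the connectedness of $\Omega\backslash\overline{\Omega_0}$, and that $\Omega_0$ is allowed to be disconnected. The statement says ``such a domain''. If it had to be read as requiring $\Omega_0$ connected, I would instead apply the Runge property to each disk separately, or note that the duality proof of Theorem~\ref{thm:Runge} never uses connectedness of $\Omega_0$, only that of its complement. The latter is the route I would take.
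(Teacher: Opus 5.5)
Your proposal is correct and follows the paper's proof essentially step for step: disjoint small balls around the $2N$ points, the locally constant target equal to $1$ on the ball around $X_j^\final$ and $0$ on the others, Runge approximation via Theorem~\ref{thm:Runge}, and invertibility of a near-identity matrix. The paper also applies Theorem~\ref{thm:Runge} directly to the disconnected union of balls, which is the route you said you would take.
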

\begin{proof}
    Choose mutually disjoint open balls around the $2N$ points, with closures contained in $\Omega$ and with the complement of their closed union connected in $\Omega$. We denote the union of these balls by $\Omega_0$ and define in $\Omega_0$ a locally harmonic function $\tilde{U}_j$ which equals $1$ in the ball around $X_j^\final$ and zero in all the other balls. By Theorem~\ref{thm:Runge}, applied in $\Omega_0$, we can approximate $\tilde{U}_j$ by a function $U_j$ in the class $S^\Dir$ or $S^\Neu$ such that
    $$
        U_j(X_j^\final) \in [1-\epsilon,1+\epsilon],
    $$
    and $U_j\in [-\epsilon,\epsilon]$ at the other points, for some $0<\epsilon\ll 1$. Thus, for each canonical basis vector $e_j$ of $\xR^N$, the image contains a vector $\tilde e_j$ arbitrarily close to $e_j$. For $\epsilon$ small enough these $N$ vectors are linearly independent. The linearity of the mapping then implies its surjectivity.
\end{proof}

\section{Stationary boundary control}\label{sect:PassCtrl}

We now combine the two ingredients of the preceding section to construct a stationary control. Unlike an active control, a stationary potential cannot compensate the point-vortex drift independently at each time. We therefore first construct a reference gradient flow which follows the prescribed arcs in a rescaled unit time. A localized Runge approximation produces admissible global fields close to this reference flow, and finitely many fixed harmonic corrections are then used to remove the resulting endpoint error. Finally, a time rescaling converts the small perturbation parameter into an arbitrarily short physical transfer time. 

\begin{theorem}\label{thm:PassCtrl}
    Let $\Omega\subset\xR^2$ be a domain satisfying hypothesis~\eqref{hyp:Domain} and $\Sigma\subset\partial\Omega$ be a nonempty relatively open subset. Consider arbitrary
    \begin{itemize}
        \item initial and final configurations $\bm X^\initial, \bm X^\final \in \mathcal{C}_N(\Omega)$ with $N\in\xN_+$ and $X_j^\initial\neq X_j^\final$ for all $j$;
        \item regular embedded real-analytic arcs $\Gamma_j$ parametrized by arc-length parameter,
        $$
            \Gamma_j: [0,L_j] \to \Omega, \quad s \mapsto \Gamma_j(s),
        $$
        where $L_j>0$ is its length.
    \end{itemize}
    If the prescribed curves $\Gamma_j$'s satisfy
    $$
        \begin{aligned}
            &\Gamma_j(0)=X_j^\initial,\qquad \Gamma_j(L_j)=X_j^\final,\qquad 1\leq j\leq N,\\
            &\Gamma_j([0,L_j])\cap\Gamma_k([0,L_k])=\varnothing,\qquad j\neq k,
        \end{aligned}
    $$
    then there exists $\varepsilon_0>0$ such that, for every $0<\varepsilon<\varepsilon_0$, one can find a stationary control $f^\varepsilon\in C^\infty_c(\Sigma)$ and a solution $\bm X^\varepsilon$ of~\eqref{eq:KR}-\eqref{eq:CtrlD} on $[0,\varepsilon]$ satisfying
    $$
        \bm X^\varepsilon(0)=\bm X^\initial,\qquad
        \bm X^\varepsilon(\varepsilon)=\bm X^\final.
    $$
    Moreover, the resulting potential of the background flow admits a harmonic conjugate. The same conclusion holds for a stationary Neumann control $g^\varepsilon\in C^\infty_c(\Sigma)$ satisfying~\eqref{eq:CtrlN_Comp}. 
    
    In either case, define the corresponding re-parametrized trajectory by
    $$
        \Gamma_j^\varepsilon(s):= X_j^\varepsilon(\varepsilon s/L_j),
        \qquad 0\leq s\leq L_j.
    $$
    The controls can be chosen so that
    $$
        \|\Gamma_j-\Gamma_j^\varepsilon\|_{C^k([0,L_j])}\to0
        \quad\text{as }\varepsilon\to0,\qquad k\in\xN,\quad 1\leq j\leq N.
    $$
\end{theorem}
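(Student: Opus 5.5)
We rescale time first. Write $t=\varepsilon\tau$ and $\Phi=\varepsilon^{-1}\Psi$. Then \eqref{eq:KR} becomes
\[
\frac{\di}{\di\tau}X_j=\nabla\Psi(X_j)+\varepsilon F_j(\bm X),\qquad \tau\in[0,1],
\]
where $F_j$ collects the interaction, Robin and circulation terms. The field $F_j$ is real-analytic on $\mathcal C_N(\Omega)$ and bounded with all derivatives on a compact neighborhood $\mathcal K$ of the configurations $(\Gamma_1(L_1\tau),\dots,\Gamma_N(L_N\tau))$, $\tau\in[0,1]$. This neighborhood is collision-free and stays away from $\partial\Omega$, by the disjointness of the arcs. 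The problem is therefore a small perturbation of a pure gradient flow.

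\textbf{Reference potential.} Along $\Gamma_j$ we prescribe the analytic vector field $V_j(\Gamma_j(s))=L_j\Gamma_j'(s)$. Lemma~\ref{lem:CurveData_HarFct} gives a harmonic $H_j$ on a thin neighborhood $U_j$ of $\Gamma_j$, extended slightly past the endpoints, with $\nabla H_j=V_j$ on the arc. The curve $\tau\mapsto\Gamma_j(L_j\tau)$ is then an integral curve of $\nabla H_j$ reaching $X_j^\final$ at $\tau=1$. We take the $U_j$ disjoint, simply connected, and small enough that $\Omega\setminus\overline{\Omega_0}$ is connected, where $\Omega_0=\bigcup_j U_j$. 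On $\Omega_0$ we set $H=H_j$ on $U_j$.

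\textbf{Runge approximation and correction.} Fix $S=S^\Dir$ (the Neumann case $S^\Neu$ is identical). By Proposition~\ref{prop:RungeClEval} we pick $P_1,\dots,P_N\in S$ such that $\big(\nabla P_k\cdot\nu_j\big)$ is invertible, where $\nu_j$ is a unit normal to $\Gamma_j$ at $X_j^\final$. We want the $P_k$ to act as transverse shifts of the endpoints. For this we apply Proposition~\ref{prop:RungeClEval}, or Theorem~\ref{thm:Runge} directly, to local targets on small balls around the $X_j^\final$ that are linear in the normal direction $\nu_j$ near $X_j^\final$ and vanish near the other points. By Theorem~\ref{thm:Runge} we choose $U^\delta\in S$ with $\|U^\delta-H\|_{C^2(K)}<\delta$ on a compact $K\subset\Omega_0$ containing the extended arcs.

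The potential is $\Psi=U^\delta+\sum_k\lambda_kP_k$ with parameters $(\lambda,\delta,\varepsilon)$. Let $\bm X(\tau;\lambda,\delta,\varepsilon)$ be the flow from $\bm X^\initial$. We choose the hitting time $\tau_j$ of each vortex with the transversal line through $X_j^\final$ normal to $\Gamma_j$. This is well defined near the unperturbed flow because $\Gamma_j$ is transverse to that line. We then study the signed offsets $E_j(\lambda,\delta,\varepsilon)\in\xR$ along $\nu_j$.

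A single stationary potential forces all vortices to share one arrival time. We enforce this by also asking $\tau_j=1$. This gives $2N$ conditions, so we need $2N$ correction directions. We therefore enlarge the family $P_k$ to $2N$ functions whose gradients at the final points span $\xR^{2N}$. This is possible by Proposition~\ref{prop:HarFct}, whose proof applies to the class $S$ with the finitely many extra linear constraints of \eqref{eq:Exact_Class}. The map $\lambda\mapsto\bm X(1;\lambda,0,0)$ then has invertible differential at $\lambda=0$. The variational equation along the unperturbed flow contributes the transport $D\Phi^{\nabla H}$ of $\int\nabla P_k$, and after shrinking the $P_k$ support effects to near the endpoints this is close to the invertible matrix $(\nabla P_k(X_j^\final))$. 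The implicit function theorem in $\lambda$, with $(\delta,\varepsilon)$ as small parameters, then gives $\lambda(\delta,\varepsilon)\to0$ solving $\bm X(1)=\bm X^\final$. The main obstacle is making this rigorous uniformly in $\delta$, since $U^\delta$ is not continuous in $\delta$. The remedy is to treat $\eta=\nabla U^\delta-\nabla H$ as a perturbation small in $C^1(K)$ and apply a quantitative inverse function theorem, the Newton–Kantorovich type, on the map $\lambda\mapsto\bm X(1;\lambda)$. Its $C^1$ distance to the reference map is $O(\delta+\varepsilon)$ by Gronwall.

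\textbf{Conclusion.} We set $\delta=\varepsilon$ and $f^\varepsilon=\varepsilon^{-1}(U^\varepsilon+\sum\lambda_kP_k)|_{\partial\Omega}\in S_\bdy^\Dir$. This control is supported in $\Sigma$, has a harmonic conjugate, and yields $\bm X^\varepsilon(\varepsilon)=\bm X^\final$. For the $C^k$ convergence, Gronwall bounds $X_j(\tau)-\Gamma_j(L_j\tau)$ by $O(\varepsilon+\delta+|\lambda|)$ in $C^0$. Higher derivatives follow by differentiating the ODE and using the $C^{k}$ Runge approximation on $K$. This gives the stated convergence of $\Gamma_j^\varepsilon$ after reparametrization.
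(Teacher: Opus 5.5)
There is a genuine gap where you conclude that $\lambda\mapsto\bm X(1;\lambda,0,0)$ has invertible differential at $\lambda=0$. The $k$-th column of that differential is the time-one value of the variational equation \eqref{eq-PassCtrl:DeriEndMap}, with $H_l$ replaced by $P_k$:
\[
    \bm Y_k(1)=\int_0^1\mathcal R(1,t)\,\big(\nabla P_k(X_j^\refer(t))\big)_{j=1}^N\di t .
\]
Here $\mathcal R$ is the resolvent of $\dot{\bm Y}=A(t)\bm Y$. This column depends on $\nabla P_k$ along the entire arcs, transported by the linearized flow. It does not depend only on the endpoint values $\nabla P_k(X_j^\final)$.

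Choosing the $P_k$ via Proposition~\ref{prop:HarFct} so that these endpoint gradients span $\xR^{2N}$ therefore says nothing about invertibility. Lemma~\ref{lem:CurveData_HarFct} and Theorem~\ref{thm:Runge} show that $P\mapsto\big(\bm Y_P(1),(\nabla P(X_j^\final))_j\big)$ maps $S$ onto $\xR^{4N}$. So there are admissible $P$ with arbitrary endpoint gradients and $\bm Y_P(1)=0$.

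Nor can you \emph{shrink the support effects to near the endpoints}. Each $P_k$ is harmonic on all of $\Omega$. A local target whose gradient vanishes on an open part of a connected neighbourhood of an arc has zero gradient on the whole neighbourhood. So there is no exactly localized target to Runge-approximate, and you give no approximate-localization argument. As written, your implicit-function step has no invertible linearization.

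The missing idea is to design the corrections along the whole arcs, as the paper does. Since $A(t)=D_{\bm X}\mathcal F(\bm X^\refer(t))$ is real-analytic, Lemma~\ref{lem:CurveData_HarFct} applied again gives local harmonic $H_l^\loc$ with $\big(\nabla H_l^\loc(X_j^\refer(t))\big)_j=\bm e_l-tA(t)\bm e_l$. The linearized solution is then exactly $t\bm e_l$. Approximating $H_l^\loc$ in $C^1(K)$ by $H_l\in S$ and applying Gronwall gives $\|\partial_{\bm\alpha}\mathscr T(0,\bm 0)-I_{2N}\|\le C\eta$. With this in place, your hitting-time/offset reformulation is unnecessary, since you end up imposing $\bm X(1)=\bm X^\final$ directly.

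Your quantitative inverse-function argument at fixed $(\delta,\varepsilon)$ is a legitimate substitute for the paper's device. The paper takes $\|\Phi_n-\Phi^\refer\|_{C^n(K)}\le 2^{-n^2}$ and glues with a flat cutoff, so that $\varepsilon\mapsto\Phi^\varepsilon$ is smooth up to $\varepsilon=0$ and the classical implicit function theorem applies. Your version is simpler but loses smooth dependence of the coefficients on $\varepsilon$, which the statement does not need. For the $C^k$ convergence, however, you must take the Runge approximation in $C^{k(\delta)}(K)$ with $k(\delta)\to\infty$, not only in $C^2$.

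Two minor points, both repaired by Theorem~\ref{thm:Runge}. Proposition~\ref{prop:RungeClEval} concerns values, not gradients. Proposition~\ref{prop:HarFct} does not by itself produce elements of $S$ satisfying the harmonic-conjugate constraints.
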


\begin{remark}
    In particular, the desired transfer can be realized in any sufficiently small prescribed time $T>0$, by taking $\varepsilon=T$ in the theorem.
\end{remark}

\begin{remark}\label{rmk:ExclPermu}
    The disjointness condition on the $\Gamma_j$'s excludes a permutation of elements in configurations, such as
    $$
        N=2, \quad X_1^\initial = X_2^\final, \quad X_1^\final = X_2^\initial.
    $$
    The treatment of such a case remains open. The argument below cannot be applied to such an exchange, since the stationary harmonic field is constructed separately near each prescribed arc.
\end{remark}

\noindent\textbf{Strategy of the proof.} Recall that the evolution equation~\eqref{eq:KR} takes the form
\begin{equation}\label{eq:KR_gen}
    \dot{X}_j = V_j(\bm X) + \nabla \Phi(X_j),
\end{equation}
where
\begin{equation}\label{eq:DefVj}
    V_j(\bm X) = \sum_{k \neq j} \kappa_k \nabla_X^\perp G_\Omega(X_j, X_k) + \frac{\kappa_j}{2} \nabla^\perp R_\Omega(X_j) + \sum_{r=1}^m c_r(\bm X)\nabla^\perp W_r(X_j).
\end{equation}
The mappings $V_j$ are real-analytic on $\mathcal C_N(\Omega)$. Indeed, $G_\Omega$ is real-analytic away from the diagonal, while $R_\Omega$ and the harmonic measures are real-analytic in the interior of $\Omega$. The definition of $c_r(\bm X)$ then gives the asserted analyticity. No regularity of the boundary beyond the Lipschitz assumption is needed at points in the interior.
\\
We first construct a harmonic potential $\Phi^\refer$ near the prescribed arcs whose gradient flow generates those arcs simultaneously,
$$
    \dot{X}_j^\refer = \nabla \Phi^\refer(X_j^\refer), \quad X_j^\refer(0) = X^\initial_j, \quad X_j^\refer(1) = X_j^\final, \quad \text{with} \quad X_j^\refer([0,1]) = \Gamma_j([0,L_j]).
$$
Then the localized Runge theorem gives admissible global harmonic potentials which approximate this local one,
$$
    \dot{\bar X}_j^\app = \nabla \Phi^\varepsilon(\bar X_j^\app), \quad \bar X_j^\app(0) = X^\initial_j, \quad \bar X_j^\app(1) \text{ close to } X_j^\final.
$$
Since Runge approximation alone does not provide regular dependence on the approximation parameter $\varepsilon$, we interpolate the approximating sequence with a smooth cutoff obtained from a one-dimensional mollifier. Finally, we add $2N$ fixed harmonic corrections, rescale the time, and study the endpoint map,
$$
    (\varepsilon,\bm \alpha) \mapsto \bar{\bm X}^\varepsilon(1), \quad \text{where} \quad \dot{\bar X}^\varepsilon_j = \varepsilon V_j(\bar{\bm X}^\varepsilon) + \nabla \left( \Phi^\varepsilon + \sum_{l=1}^{2N}\alpha_l H_l \right) (\bar X_j^\varepsilon), \quad \bar X_j^\varepsilon(0) = X^\initial_j.
$$
An application of implicit function theorem corrects both the Runge error and the drift $V_j$, namely $\bar{\bm X}^\varepsilon(1) = \bm X^\final$. The desired solution to~\eqref{eq:KR_gen}-\eqref{eq:DefVj} can then be recovered from time-rescaling,
$$
    \bm X^\varepsilon(t) := \bar{\bm X}^\varepsilon(\varepsilon t), \quad 0 \le t \le \varepsilon.
$$
Here and below, the bar represents time-rescaled objects.

\subsection{Construction of the approximate dynamics}

Let us temporarily neglect the uncontrolled drift term $V_j$'s and construct an approximate control.

\medskip
\noindent\textbf{Local reference dynamics.} The first step is to construct a locally defined harmonic function whose gradient flow generated the prescribed dynamic. Set
$$
    X^\refer_j(t):=\Gamma_j(L_jt), \qquad 0\leq t\leq1,
$$
and $\bm X^\refer:=(X^\refer_1,...,X^\refer_N)$. Since $X^\refer_j$ is embedded, the mapping
$$
    [0,1] \to \xR^2, \quad t \mapsto \dot{X}^\refer_j(t)
    =L_j\dot\Gamma_j(L_jt)
$$
defines a real-analytic vector field along the regular embedded arc $\Gamma_j$. The local solvability of Laplace equation (cf. Lemma~\ref{lem:CurveData_HarFct}) gives a harmonic function $\Phi_j^\refer$, defined in a neighborhood of $\Gamma_j([0,L_j])$, such that
$$
    \nabla\Phi_j^\refer(X^\refer_j(t))=\dot{X}^\refer_j(t), \qquad \forall t\in[0,1].
$$
For every $j$, choose an open neighborhood $\Omega_j$ of $\Gamma_j([0,L_j])$ with closure contained in the domain of definition of $\Phi_j^\refer$. Since the prescribed curves $\Gamma_j([0,L_j])$'s are mutually disjoint, by choosing $\Omega_j$ close enough to $\Gamma_j([0,L_j])$, we may assume that the closures of $\Omega_j$ are mutually disjoint and that 
$$
    \Omega_0:=\bigsqcup_{j=1}^N\Omega_j, \qquad \Omega\backslash\overline{\Omega}_0\text{ is connected}.
$$
Define the harmonic function $\Phi^\refer$ on the disconnected set $\Omega_0$ by $\Phi^\refer|_{\Omega_j}=\Phi_j^\refer$. It follows that $\bm X^\refer = (X^\refer_1,...,X^\refer_N)$ verifies 
$$
    \dot{X}^\refer_j=\nabla\Phi^\refer(X^\refer_j), \quad X^\refer_j(0)=X_j^\initial, \quad X^\refer_j(1)=X_j^\final, \quad j=1,...,N.
$$
Moreover, the trajectory from $\bm X^\initial$ to $\bm X^\final$ coincides with the prescribed ones,
$$
    X^\refer_j([0,1]) = \Gamma_j([0,L_j]), \qquad j=1,...,N.
$$

\medskip
\noindent\textbf{Runge approximation.} The admissible classes $S^\Dir$ and $S^\Neu$ were defined in~\eqref{eq:RungeClDir}-\eqref{eq:RungeClNeu}. We use the notation $S=S^\Dir$ in the Dirichlet case and $S=S^\Neu$ in the Neumann case. Thus every member of $S$ is an admissible harmonic velocity potential, with boundary datum supported in $\Sigma$, and every member of $S^\Neu$ satisfies the compatibility condition~\eqref{eq:CtrlN_Comp}.
Fix compact tubular neighborhoods $K_j$ of the arcs such that
$$
    \Gamma_j([0,L_j])\subset\operatorname{int}(K_j), \qquad K_j\Subset\Omega_j,
$$
and set $K:=\bigsqcup_{j=1}^NK_j$. According to the refined Runge approximation theorem (see Theorem~\ref{thm:Runge}), there are functions $\Phi_n\in S$ such that
\begin{equation*}
    \|\Phi_n-\Phi^\refer\|_{C^n(K)}\leq2^{-n^2}, \qquad n\geq1.
\end{equation*}
A sequence of Runge approximations is not by itself sufficient for the later implicit-function argument, because it carries no differentiable dependence on its index. The super-algebraic accuracy imposed above allows us to extend the approximating sequence $\{\Phi_n\}$ to a family $\{\Phi^\varepsilon\}$ depending smoothly on some approximation parameter $\varepsilon$.
\\
More precisely, let $\rho\in C_c^\infty(]0,1[)$ be nonnegative with integral one and define
$$
    \theta(s):=\int_{-\infty}^s\rho(q)\di q, \qquad \varepsilon_n:=2^{-n}.
$$
The approximating family $\{\Phi^\varepsilon\}_{\varepsilon\in[-1,1]\backslash\{0\}}$ of $\Phi^\refer$ can be defined piecewise,
\begin{equation*}
    \begin{aligned}
        &\Phi^\varepsilon
        :=\theta\left(\frac{\varepsilon-\varepsilon_{n+1}}{\varepsilon_n-\varepsilon_{n+1}}\right)\Phi_n
        +\left[1-\theta\left(\frac{\varepsilon-\varepsilon_{n+1}}{\varepsilon_n-\varepsilon_{n+1}}\right)\right]\Phi_{n+1}, &&\text{if $\varepsilon\in[\varepsilon_{n+1},\varepsilon_n]$}; \\
        &\Phi^\varepsilon := \Phi^\refer, &&\text{if $\varepsilon=0$}; \\
        &\Phi^\varepsilon := \Phi^{-\varepsilon}, &&\text{if $\varepsilon<0$}.
    \end{aligned}
\end{equation*}
The cutoff $\theta$ is flat at $0$ and $1$, while the approximation errors $2^{-n^2}$ decrease faster than every power of $\varepsilon_n$. Consequently, for all $k\in\xN$,
\begin{equation*}
    [-1,1] \to C^k(K), \quad \varepsilon\mapsto\Phi^\varepsilon
    \quad\text{is a smooth mapping,}
\end{equation*}
and, for all $r,k\in\xN$,
\begin{equation*}
    \|\partial_\varepsilon^r(\Phi^\varepsilon-\Phi^\refer)\|_{C^k(K)} \to 0, \quad \text{as }\varepsilon\to 0.
\end{equation*}
Indeed, for fixed $r,k\in\xN$ with $n \ge k$, differentiation on the interval $[\varepsilon_{n+1},\varepsilon_n]$ gives
$$
   \sup_{\varepsilon\in [\varepsilon_{n+1},\varepsilon_n]} \|\partial_\varepsilon^r(\Phi^\varepsilon-\Phi^{\refer})\|_{C^k(K)} \leq C_{r,k}2^{r(n+1)-n^2}.
$$
In particular, $(\varepsilon,X)\mapsto\nabla\Phi^\varepsilon(X)$ is jointly smooth on a neighborhood of the arcs. Notice that for every $\varepsilon\neq0$, one has $\Phi^\varepsilon\in S$, since $S$ is a linear space.

\subsection{Endpoint correction and proof of the theorem}

The remaining issue is exactness at the final time. We introduce $2N$ fixed harmonic corrections, one for each endpoint coordinate, and choose their local gradient data so that the corresponding linearized variations are the canonical coordinate variations. After Runge approximation, the derivative of the endpoint map is arbitrarily close to the identity and is therefore invertible. The implicit function theorem then selects coefficients which correct simultaneously the Runge error and the uncontrolled point-vortex drift.

\medskip
\noindent\textbf{Endpoint map.} The final harmonic potential is chosen as a rescaling and correction of $\Phi^\varepsilon$ defined in the previous part,
$$
   \frac{1}{\varepsilon}\left(\Phi^\varepsilon+\sum_{l=1}^{2N}\alpha_lH_l\right) \in S,
$$
where coefficients $\alpha_l\in\xR$, harmonic functions $H_l\in S$, and the parameter $\varepsilon>0$ will be determined later. Through a rescaling in time $t \to t/\varepsilon$, the equation~\eqref{eq:KR_gen} can be reduced to the following equation for $\hat{\bm X} = (\hat X_1,...,\hat X_N)$,
\begin{equation}\label{eq:PerturEq}
    \left\{\begin{aligned}
        &\hat X_j = \varepsilon V_j(\hat{\bm X}) + \nabla \left( \Phi^\varepsilon + \sum_{l=1}^{2N} \alpha_l H_l \right)(\hat X_j), \\
        &\hat X_j(0) = X_j^\initial.
    \end{aligned}\right.
\end{equation}
Our goal is then to solve the endpoint equation
$$
    \mathscr T(\varepsilon,\bm\alpha):=\hat{\bm X}(1;\varepsilon,\bm \alpha)-\bm X^\final = 0, \quad \text{where $\hat{\bm X}$ solves~\eqref{eq:PerturEq} with parameter $(\varepsilon,\bm \alpha)$.}
$$
Let $\mathcal U\Subset\mathcal C_N(\Omega)$ be a neighborhood of the reference curve $\bm X^\refer([0,1])$ such that the $j$-th component of every configuration in $\overline{\mathcal U}$ belongs to $\operatorname{int}(K_j)$. The right-hand side of~\eqref{eq:PerturEq} is smooth jointly in $(\varepsilon,\bm\alpha,\hat{\bm X})$ on a neighborhood of $\{0\}\times\{\bm 0\}\times\overline{\mathcal U}$ and is therefore uniformly bounded and uniformly locally Lipschitz in $\hat{\bm X}$ there. Standard existence and smooth parameter dependence for ordinary differential equations, followed by Gronwall's inequality, show that, when the parameters are sufficiently small, the solutions remain close to the reference dynamics $\bm X^\refer$ and have smooth dependence on the parameters $(\varepsilon,\bm \alpha)$.

Therefore, the endpoint map $\mathscr{T}$ is well-defined and smooth on a neighborhood of $(0,\bm 0)$. Notice that, when $(\varepsilon,\bm \alpha)=(0,\bm 0)$, the solution to~\eqref{eq:PerturEq} is exactly the reference dynamics $\bm X^\refer$. In particular, 
$$
    \mathscr T(0,\bm 0)=\bm 0.
$$
It remains to show that
\begin{equation}\label{claim:EndPtMap_Invert}
    \exists H_l\in S, \text{ such that }\partial_{\bm \alpha}\mathscr{T}(0,\bm 0) \text{ is invertible}.
\end{equation}
Once it is proved, by implicit function theorem, we obtain $\varepsilon_0>0$ and a smooth implicit function $\bm \alpha(\varepsilon)$ such that
$$
    \mathscr T(\varepsilon,\bm\alpha(\varepsilon)) = 0, \quad \forall\ 0< \varepsilon <\varepsilon_0.
$$
The smooth dependence of the solution in parameters implies the convergence of trajectory\footnote{This convergence can also be proved by regarding them as the level set of the Hamiltonian function, in terms of the streamline function. Since the reference potential is non-degenerate, we can apply Lemma~\ref{lem:EqtoCurve} to deduce the convergence of trajectory.}:
$$
    \bar{\bm X}^\varepsilon:=\hat{\bm X}(\cdot;\varepsilon,\bm\alpha(\varepsilon)); \qquad \bar{\bm X}^\varepsilon\to \bm X^\refer\quad\text{in }C^k([0,1])\quad\text{for every }k\in\xN.
$$
Note that the corresponding solution to the original system~\eqref{eq:KR} (or equivalently~\eqref{eq:KR_gen}) is 
$$
    \bm X^\varepsilon(t):=\bar{\bm X}^\varepsilon(t/\varepsilon), \quad 0\leq t\leq\varepsilon; \qquad \bm X^\varepsilon(0) = \bm X^\initial, \quad \bm X^\varepsilon(\varepsilon) = \bm X^\final,
$$
where the control time $\varepsilon$ can be chosen arbitrarily small. 

\medskip
\noindent\textbf{Harmonic Corrections.} In this part, we precisely construct the harmonic corrections and prove the claim~\eqref{claim:EndPtMap_Invert}. From the smooth dependence of solution to~\eqref{eq:PerturEq} in parameters $(\varepsilon,\bm \alpha)$, we can compute directly the differentiation of the endpoint map $\mathscr{T}$ in $\bm \alpha$. Indeed, the differentiation takes the form
$$
    \partial_{\bm \alpha}\mathscr{T}(0,\bm 0) = \big(\bm Y_1(1)\mid\cdots\mid\bm Y_{2N}(1)\big),
$$
where
$$
    \bm Y_l(t):=\partial_{\alpha_l}\hat{\bm X}(t;0,\bm 0), \quad l=1,...,2N,
$$
is determined by the equation
\begin{equation}\label{eq-PassCtrl:DeriEndMap}
    \dot{\bm Y}_l=A(t)\bm Y_l+\big(\nabla H_l(X^\refer_1(t)),...,\nabla H_l(X^\refer_N(t))\big), \qquad \bm Y_l(0)=0.
\end{equation}
Here the $2N\times 2N$ matrix $A(t)$, defined by
$$
    A(t):=D_{\bm X}\mathcal F(\bm X^\refer(t)), \qquad \mathcal F(\bm X):=\big(\nabla\Phi^\refer(X_1),...,\nabla\Phi^\refer(X_N)\big),
$$
depends only on the reference data $\Phi^\refer$, $\bm X^\refer$ and is thus real-analytic in time.
\\
Applying again the local solvability of Laplace equation (see Lemma~\ref{lem:CurveData_HarFct}), one can construct harmonic functions $H_l^\loc$ locally defined near each reference dynamics $\Gamma_j([0,L_j])$'s such that the solution to~\eqref{eq-PassCtrl:DeriEndMap} is
$$
    \bm Y_l^\loc(t) = t \bm e_l, \qquad 0 \le t \le 1,
$$
where $\bm e_l$ denotes the $l$-th canonical vector of $\xR^{2N}$. Then Theorem~\ref{thm:Runge} gives harmonic functions $H_l\in S$ such that
$$
    \|H_l-H_l^{\mathrm{loc}}\|_{C^1(K)}<\eta,
$$
for arbitrarily small $\eta>0$. Here one may shrink the neighborhood $K$ of prescribed curves if necessary. In particular,
\begin{equation*}
    \sup_{ \substack{1\leq j\leq N \\ 0\leq t\leq1} }
    \left| \big(\nabla H_l(X^\refer_1(t)),...,\nabla H_l(X_N^\refer(t))\big) - \left( \dot{\bm Y}^\loc_l(t) - A(t) \bm Y^\loc_l(t) \right) \right|<\eta.
\end{equation*}
Consequently, by Gronwall's inequality, the resulting derivative $\bm Y_l$ is close to the one by local harmonic function
$$
    \sup_{0\leq t\leq1}|\bm Y_l(t)-t\bm e_l|\le C \eta, \qquad l=1,..,2N,
$$
for some constant $C>0$ independent of $\eta$. That is to say the differentiation $\partial_{\bm \alpha}\mathscr{T}(0,\bm 0)$ is close to identity,
$$
    \|\partial_{\bm\alpha}\mathscr T(0,\bm 0)-I_{2N}\|\leq C\eta, \qquad \forall\ 0< \eta \ll 1,
$$
and thus invertible. This proves the claim~\eqref{claim:EndPtMap_Invert} and thus Theorem~\ref{thm:PassCtrl}.

\section{Optimal control and geometric optimization}
\label{sect:OptCtrl}

In this section, we consider the optimal control problem for the total controlled length, namely the total length passed by controlled point vortices. When the control norm and the transfer time are both penalized, interior estimates give the existence of an optimal control under a uniform separation constraint. When these penalties are removed and only trajectory length remains, the problem is reduced to the optimal controlled length. This length is identical to the intrinsic distance in $\Omega$. When the control function is regular enough, the infimum of the optimal controlled length can not be attained, even for convex domains, where the intrinsic distance coincides with the Euclidean distance.

\medskip

We now introduce the optimal control problem for the controlled $N$-point vortex system~\eqref{eq:KR}-\eqref{eq:CtrlD} (or~\eqref{eq:CtrlN}), where the control is stationary. The cost functional penalizes simultaneously the total length traveled by the vortices, the transfer time, and the Sobolev norm of the boundary control.

Fix $s>0$. Recall from \eqref{eq:ConfigSet} the configuration space $\mathcal C_N(\Omega)$. For a stationary Dirichlet boundary control $f\in H^s(\partial\Omega)$, we denote by $\Phi$ the corresponding harmonic potential. We also recall here the controlled system (cf. equation~\eqref{eq:KR_gen}),
\begin{equation*}
    \dot{X}_j(t) = V_j(\bm X(t)) + \nabla\Phi(X_j(t)), \quad j=1,...,N,
\end{equation*}
where $V_j$ (see~\eqref{eq:DefVj}) denotes the uncontrolled velocity of the $j$-th vortex, including its interaction with the other vortices, the boundary contribution, and the fixed circulation sector. Recall also the notation for the $N$-vortex position
$$
    \bm X(t) = (X_1(t),...,X_N(t)) \in \mathcal{C}_N(\Omega).
$$
In order to avoid both collisions and contact with the boundary, for $\delta\geq0$ we introduce the refined configuration space
\begin{equation}\label{eq:ConfigSet_Ref}
    \mathcal C_{N,\delta}(\Omega):=\left\{\bm X=(X_1,...,X_N)\in\mathcal C_N(\Omega)\left|\ \begin{aligned}
        \;&
        \dist(X_j,\partial\Omega)\geq\delta,\quad 1\leq j\leq N,\\
        &|X_i-X_j|\geq\delta,\quad 1\leq i<j\leq N
    \end{aligned}\right.\right\}.
\end{equation}
For $\delta=0$, this definition gives
$$
    \mathcal C_{N,0}(\Omega)=\mathcal C_N(\Omega),
$$
and, in particular, $\mathcal C_{1,0}(\Omega)=\Omega$. When $\delta>0$, the additional conditions impose uniform separation from the boundary and from collisions.
Given initial and final configurations $\bm X^\initial,\bm X^\final \in \mathcal{C}_{N,\delta}(\Omega)$ and a nonempty relatively open subset $\Sigma\subset\partial\Omega$, we define the admissible class
\begin{equation*}
    \mathcal A_{\Sigma,\delta}^s(\bm X^\initial,\bm X^\final):=\left\{(f,T)\left|\ \begin{aligned}
        \;&
        f\in H^{s}(\partial\Omega),\quad T>0,\quad \Supp{f}\subset\overline{\Sigma},\\
        &\text{\eqref{eq:KR_gen} has a solution $\bm X$ with
        $\bm X(0)=\bm X^\initial$ and $\bm X(T)=\bm X^\final$,}\\
        &\bm X(t)\in\mathcal{C}_{N,\delta}(\Omega)
        \quad\text{for every }0\leq t\leq T
    \end{aligned}\right.\right\}.
\end{equation*}
The Neumann admissible class, denoted by the same symbol when no confusion is possible, is obtained by replacing the first line in the definition above by
$$
    g\in H^{s-1}(\partial\Omega),\quad T>0,\quad
    \Supp{g}\subset\overline\Sigma,\quad \int_{\partial\Omega} g \di \sigma = 0.
$$
The last condition is the weak interpretation of~\eqref{eq:CtrlN_Comp}, which should be understood as the distributional couple between $g$ and the constant function $1$. The corresponding harmonic potential is normalized by
$$
    \int_\Omega\Phi\di x=0.
$$
For simplicity, we give the proof in the Dirichlet case and record the necessary modifications for the Neumann case at the end. For $\mu, \beta \ge 0$, we consider the functional
\begin{equation}
\label{eq-cost-N-vortices}
J_{\mu,\beta}(f,T)
=
\sum_{j=1}^N
\int_0^T|\dot X_j(t)|\,dt
+
\beta T
+
\mu\|f\|_{H^s(\partial\Omega)}^2.
\end{equation}
The three terms have distinct roles. The first measures the total geometric length of the vortex trajectories, the second penalizes long transfer times, and the third penalizes the size and oscillations of the stationary boundary control. The restriction $s>0$ supplies the compactness of the interior harmonic extension operator used below.

\subsection{Existence of optimal controls}

We prove that, when $\beta,\mu>0$, so that both the boundary control and the transfer time are penalized, the functional~\eqref{eq-cost-N-vortices} admits a minimizer. The proof is based on the compactness of the resolution of boundary problems (see Proposition~\ref{prop:RegHarExt}). The positive control penalty bounds a minimizing sequence in the boundary Sobolev space, while the positive time penalty bounds the transfer times from above. The separation condition confines all trajectories to a compact subset of the collision-free configuration space. On this set, compactness of the interior harmonic extension gives uniform convergence of the controlled vector fields. The endpoint condition and the uniform speed bound prevent the limiting transfer time from vanishing.
\begin{theorem}
\label{thm-existence-optimal-N}
Let $\Omega\subset\mathbb R^2$ be a bounded connected domain with Lipschitz boundary, and let $\Sigma\subset\partial\Omega$ be nonempty and relatively open. Fix $N\in\xN_+$. Consider parameters $s,\delta>0$ and configurations $\bm X^\initial, \bm X^\final\in \mathcal{C}_{N,\delta}(\Omega)$ such that
$$
    \bm X^\initial\neq\bm X^\final, \qquad
    \mathcal{A}_{\Sigma,\delta}^s(\bm X^\initial, \bm X^\final) \neq \varnothing.
$$
Then, for every $\mu,\beta > 0$, the functional~\eqref{eq-cost-N-vortices} admits a minimizer
$$
    (f_\ast,T_\ast) \in \mathcal{A}_{\Sigma,\delta}^s(\bm X^\initial, \bm X^\final).
$$
The analogous conclusion holds for the compatible Neumann admissible class, with the control term in the functional replaced by $\mu\|g\|_{H^{s-1}(\partial\Omega)}^2$.
\end{theorem}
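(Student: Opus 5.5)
We argue by the direct method. Let $m_\ast:=\inf J_{\mu,\beta}$ over $\mathcal A_{\Sigma,\delta}^s(\bm X^\initial,\bm X^\final)$. This is finite because the class is nonempty, and nonnegative. Take a minimizing sequence $(f_n,T_n)$ with associated potentials $\Phi_n$ and trajectories $\bm X_n$. Since $J_{\mu,\beta}(f_n,T_n)\le m_\ast+1$ and $\mu,\beta>0$, we have uniform bounds
\[
\|f_n\|_{H^s(\partial\Omega)}\le C,\qquad T_n\le C .
\]
Passing to a subsequence, $f_n\rightharpoonup f_\ast$ weakly in $H^s(\partial\Omega)$ and $T_n\to T_\ast\in[0,C]$. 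The support condition $\Supp f_\ast\subset\overline\Sigma$ survives, because the set of $H^s$ functions vanishing on the open set $\partial\Omega\setminus\overline\Sigma$ is a closed subspace and hence weakly closed. In the Neumann case the constraint $\int g=0$ is likewise weakly closed.

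The key analytic step is to invoke Proposition~\ref{prop:RegHarExt}, the compactness of the harmonic extension from $H^s(\partial\Omega)$, $s>0$, or from $H^{s-1}$ with zero mean and the normalization $\int_\Omega\Phi=0$. Fix the compact set $K_\delta:=\{X\in\Omega:\dist(X,\partial\Omega)\ge\delta\}$. The proposition gives $\nabla\Phi_n\to\nabla\Phi_\ast$ uniformly on $K_\delta$, and in fact in $C^1(K_\delta)$ by interior estimates. A bounded sequence is mapped to a precompact one, and the limit is identified through weak convergence. The drifts $V_j$ are smooth on the compact set $\mathcal C_{N,\delta}(\Omega)$. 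Hence the fields $F_n(\bm X)=(V_j(\bm X)+\nabla\Phi_n(X_j))_j$ are uniformly bounded, uniformly Lipschitz, and converge uniformly to $F_\ast$ there.

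Next we need a lower bound on the transfer time. The speeds satisfy $|\dot{\bm X}_n|\le M$ uniformly, so
\[
0<|\bm X^\final-\bm X^\initial|\le M T_n ,
\]
which gives $T_\ast\ge |\bm X^\final-\bm X^\initial|/M>0$. Now extend each $\bm X_n$ constantly beyond $T_n$. The sequence is equi-Lipschitz with values in the compact set $\mathcal C_{N,\delta}(\Omega)$, so Arzelà–Ascoli yields a uniform limit $\bm X_\ast$ on $[0,T_\ast]$. Passing to the limit in the integral equation
\[
\bm X_n(t)=\bm X^\initial+\int_0^t F_n(\bm X_n)
\]
shows that $\bm X_\ast$ solves the system with potential $\Phi_\ast$. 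The limit stays in the closed set $\mathcal C_{N,\delta}(\Omega)$, and $\bm X_\ast(T_\ast)=\lim\bm X_n(T_n)=\bm X^\final$ by equicontinuity. Therefore $(f_\ast,T_\ast)$ is admissible.

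It remains to prove lower semicontinuity of the cost. The speeds $\dot{\bm X}_n=F_n(\bm X_n)$ converge uniformly to $F_\ast(\bm X_\ast)=\dot{\bm X}_\ast$ on $[0,\min(T_n,T_\ast)]$, and all speeds are bounded by $M$. Combined with $T_n\to T_\ast$, this gives convergence of the length terms,
\[
\sum_j\int_0^{T_n}|\dot X_{j,n}|\,dt\to\sum_j\int_0^{T_\ast}|\dot X_{j,\ast}|\,dt .
\]
The time term converges as well, since $\beta T_n\to\beta T_\ast$. The norm term is weakly lower semicontinuous:
\[
\|f_\ast\|_{H^s}^2\le\liminf_n\|f_n\|_{H^s}^2 .
\]
Hence $J_{\mu,\beta}(f_\ast,T_\ast)\le m_\ast$, so $(f_\ast,T_\ast)$ is a minimizer.

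The main obstacle is the compactness step. We need uniform convergence of $\nabla\Phi_n$ on $K_\delta$ for weakly convergent data in a low-regularity space on a Lipschitz boundary, where in the Neumann case the data lie in a space of negative order. This is exactly what Proposition~\ref{prop:RegHarExt} is meant to supply. The separation parameter $\delta>0$ is essential here: it keeps trajectories in a fixed interior compact set where this convergence holds, and it prevents collisions in the limit.
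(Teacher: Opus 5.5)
Your proposal is correct and follows essentially the same route as the paper. The shared steps are: bounds from the two penalties, weak compactness of $f_n$, uniform interior convergence of $\nabla\Phi_n$ on $K_\delta$ via Proposition~\ref{prop:RegHarExt}, the lower bound $T_\ast\geq|\bm X^\final-\bm X^\initial|/M$, Arzel\`a--Ascoli, passage to the limit in the integral equation and in the length term, and weak lower semicontinuity of the control norm. The only difference is cosmetic: you extend $\bm X_n$ constantly beyond $T_n$ and work on a fixed interval, whereas the paper rescales to $\bm Y_n(\tau)=\bm X_n(T_n\tau)$ on $[0,1]$. Both handle the varying transfer times; your version needs the additional (valid) estimate of the integrals over the interval between $T_n$ and $T_\ast$, with integrand bounded by $M$.
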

\begin{remark}
    The nonemptiness assumption is substantive when $\delta>0$ and $\Omega$ satisfies hypothesis~\eqref{hyp:Domain}. Theorem~\ref{thm:PassCtrl} implies nonemptiness whenever the reference arcs can be chosen with mutual and boundary separation strictly larger than $\delta$, after taking the approximation sufficiently close. On a Lipschitz boundary, the controls constructed there are restrictions of ambient smooth functions and therefore belong to $H^1(\partial\Omega)$. For higher indices it requires additional boundary regularity, for instance a smooth boundary, or a separate verification that the constructed datum belongs to the required Sobolev space. The absence of a permutation alone does not guarantee the required separation for a prescribed $\delta$.
\end{remark}

\begin{proof}
Let $(f_n,T_n)$ be a minimizing sequence. Since the admissible class is nonempty, there exists $C>0$ such that
$$
    J_{\mu,\beta}(f_n,T_n)\leq C.
$$
Consequently,
\begin{equation}
\label{eq-bounds-fn-Tn}
    T_n\leq\tfrac{C}{\beta},
    \qquad
    \|f_n\|_{H^s(\partial\Omega)}\leq\sqrt{\tfrac{C}{\mu}}.
\end{equation}
After extraction of a subsequence,
$$
    T_n\to T_\ast\geq0,
    \qquad
    f_n\rightharpoonup f_\ast\quad\text{weakly in }H^s(\partial\Omega),
$$
which implies
$$
    \Supp{f_\ast}\subset\overline\Sigma.
$$
Let $\Phi_n$ and $\Phi_\ast$ be the harmonic extensions of $f_n$ and $f_\ast$, respectively, and set
$$
    K_\delta:=\{x\in\overline\Omega:\dist(x,\partial\Omega)\geq\delta\}\Subset\Omega.
$$
The compactness of the interior harmonic extension operator in Proposition~\ref{prop:RegHarExt}, together with the weak convergence of $f_n$, gives
\begin{equation}
\label{eq-gradient-harmonic-compact}
    \nabla^k\Phi_n\longrightarrow\nabla^k\Phi_\ast
    \quad\text{uniformly on }K_\delta,\quad \forall k\in\xN.
\end{equation}
The set $\mathcal C_{N,\delta}(\Omega)$ is a closed subset of the compact set $K_\delta^N$, and is therefore compact. Define on this set
$$
    \mathcal F_n(\bm X):=\big(V_j(\bm X)+\nabla\Phi_n(X_j)\big)_{j=1}^N,
$$
and define $\mathcal F_\ast$ analogously. Since this set stays away from every singularity of the point-vortex field,
$$
    \mathcal F_n\longrightarrow\mathcal F_\ast
    \quad\text{uniformly on }\mathcal C_{N,\delta}(\Omega),
    \qquad
    \sup_n\|\mathcal F_n\|_{L^\infty(\mathcal C_{N,\delta}(\Omega))}\leq M
$$
for some $M>0$. If $\bm X_n$ is the trajectory associated with $(f_n,T_n)$, then
$$
    |\bm X^\final-\bm X^\initial|
    \leq\int_0^{T_n}|\dot{\bm X}_n(t)|\di t
    \leq MT_n.
$$
Since $\bm X^\initial\neq\bm X^\final$, it follows that
$$
    T_\ast\geq\frac{|\bm X^\final-\bm X^\initial|}{M}>0.
$$
Rescale all trajectories onto the fixed interval $[0,1]$. This avoids comparing solutions defined on different time intervals. The rescaled equations are equi-Lipschitz, so their limits preserve the state constraint and the two endpoints. Uniform convergence of the vector fields then permits passage to the limit in the equation and in the length term, while weak lower semicontinuity treats the control cost. More precisely,
\begin{equation*}
    \begin{aligned}
        &\bm Y_n(\tau):=\bm X_n(T_n\tau),\qquad
        \dot{\bm Y}_n(\tau)=T_n\mathcal F_n(\bm Y_n(\tau)),\\
        &\bm Y_n(0)=\bm X^\initial,\qquad \bm Y_n(1)=\bm X^\final.
    \end{aligned}
\end{equation*}
The sequence $\bm Y_n$ is equi-Lipschitz. By the Arzel\`a-Ascoli theorem, after extraction,
$$
    \bm Y_n\longrightarrow\bm Y_\ast
    \qquad\text{uniformly on }[0,1].
$$
Since $\mathcal C_{N,\delta}(\Omega)$ is closed,
$$
    \bm Y_\ast(\tau)\in\mathcal C_{N,\delta}(\Omega),
    \qquad
    \bm Y_\ast(0)=\bm X^\initial,
    \qquad
    \bm Y_\ast(1)=\bm X^\final.
$$
Moreover,
$$
    \bm Y_n(\tau)
    =\bm X^\initial+\int_0^\tau T_n\mathcal F_n(\bm Y_n(t))\di t.
$$
The uniform convergence of $\mathcal F_n$ on $\mathcal C_{N,\delta}(\Omega)$, the uniform convergence of $\bm Y_n$, and $T_n\longrightarrow T_\ast$ imply that
$$
    \bm Y_\ast(\tau)
    =\bm X^\initial+\int_0^\tau T_\ast\mathcal F_\ast(\bm Y_\ast(t))\di t.
$$
Consequently, $\bm Y_\ast\in C^1([0,1])$ and
$$
    \dot{\bm Y}_\ast=T_\ast\mathcal F_\ast(\bm Y_\ast).
$$
Thus,
$\bm X_\ast(t):=\bm Y_\ast(t/T_\ast)$, $0\leq t\leq T_\ast$, is an admissible trajectory with $(f_\ast,T_\ast)\in\mathcal A_{\Sigma,\delta}^s(\bm X^\initial,\bm X^\final)$.
Moreover,
$$
    \dot{\bm Y}_n=T_n\mathcal F_n(\bm Y_n)\longrightarrow T_\ast\mathcal F_\ast(\bm Y_\ast)=\dot{\bm Y}_\ast
$$
uniformly on $[0,1]$. Therefore,
\begin{align*}
    \sum_{j=1}^N\int_0^{T_n}|\dot X_{j,n}(t)|\di t
    &=\sum_{j=1}^N\int_0^1|\dot Y_{j,n}(\tau)|\di\tau\\
    &\longrightarrow\sum_{j=1}^N\int_0^1|\dot Y_{j,\ast}(\tau)|\di\tau
    =\sum_{j=1}^N\int_0^{T_\ast}|\dot X_{j,\ast}(t)|\di t.
\end{align*}
Finally, weak lower semicontinuity gives
$$
    \|f_\ast\|_{H^s(\partial\Omega)}^2
    \leq\liminf_{n\to\infty}\|f_n\|_{H^s(\partial\Omega)}^2,
$$
while $T_n\to T_\ast$. Hence,
$$
    J_{\mu,\beta}(f_\ast,T_\ast)
    \leq\liminf_{n\to\infty}J_{\mu,\beta}(f_n,T_n),
$$
which gives a minimizer.
\end{proof}

\begin{remark}
The separation condition $\bm X(t)\in\mathcal C_{N,\delta}(\Omega)$ has two purposes. It prevents collisions and contact with the boundary,
where the point-vortex vector field becomes singular, and it places
all trajectories in a compact subset on which the harmonic extensions
of bounded controls enjoy uniform interior estimates.
\end{remark}

\begin{remark}
The minimizer need not be unique. Although the control cost
$f\mapsto\mu\|f\|_{H^s(\partial\Omega)}^2$ is strictly convex, the dependence of the trajectories on the control
is nonlinear, and the endpoint constraint is not a convex constraint.
Thus different stationary harmonic fields may connect the same two
configurations with the same optimal cost.
\end{remark}

\subsection{Optimal controlled length}

In this part, we consider the limiting case $\mu=\beta=0$, in which only the controlled length is penalized. Furthermore, we consider the one-point case $N=1$ with $\delta=0$, allowing the trajectory to approach the boundary. The problem is then reduced to the optimal controlled length
\begin{equation}\label{eq:OptCtrlLen_Def}
    d_{\Sigma,\Omega}(X^\initial,X^\final) := \inf \int_0^T |\dot{X}(t)| \di t,
\end{equation}
where the infimum is taken over all control-time pairs in the admissible class $\mathcal A_{\Sigma,0}^s(X^\initial,X^\final)$. Here, we use the unbolded letters to denote the single point vortex.
In the Neumann formulation, the same symbol denotes the infimum after replacing the Dirichlet datum $f\in H^s(\partial\Omega)$ by a compatible datum $g\in H^{s-1}(\partial\Omega)$.
The dependence of $d_{\Sigma,\Omega}$ on $s$ and on the choice of the Dirichlet or Neumann formulation is suppressed in the notation. Although no positive separation margin is prescribed when $\delta=0$, the image of every individual continuous trajectory on a finite time interval is compactly contained in $\Omega$.

Recall that, given any two distinct points $X,X'\in\Omega$, we can define the intrinsic Euclidean distance
\begin{equation}\label{eq-intrinsic-distance}
    d_\Omega(X,X') := \inf_{\Gamma\in\mathcal C(X,X')} \operatorname{Length}(\Gamma),
\end{equation}
where
$$
    \mathcal C(X,X') := \left\{ \Gamma\in C^1([0,1];\Omega): \Gamma(0)=X,\ \Gamma(1)=X' \right\}.
$$
Thus,
\[
    d_\Omega(X,X') \geq |X-X'|,
\]
with equality whenever the segment $[X,X']$ is contained in
$\Omega$. In particular, if $\Omega$ is convex, $d_\Omega$ coincides with the Euclidean distance. The following lemma states that, in general, the intrinsic Euclidean distance can be approximated by the length of some real-analytic curves joining the given points.

\begin{lemma}\label{lem:NearMinAnalyticArc}
    Let $\Omega\subset\xR^2$ be a domain and let $X^\initial,X^\final\in\Omega$ be distinct. For every $\eta>0$, there exist $L>0$ and a regular embedded real-analytic arc
    $$
        \Gamma:[0,L]\to\Omega,
    $$
    parametrized by arc length, such that
    $$
        \Gamma(0)=X^\initial,\qquad \Gamma(L)=X^\final,\qquad
        L<d_\Omega(X^\initial,X^\final)+\eta.
    $$
\end{lemma}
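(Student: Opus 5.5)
By definition of $d_\Omega$ in \eqref{eq-intrinsic-distance}, there is a curve $\Gamma_0\in\mathcal C(X^\initial,X^\final)$ with $\operatorname{Length}(\Gamma_0)<d_\Omega(X^\initial,X^\final)+\eta/3$. The plan is to deform $\Gamma_0$ into a regular embedded real-analytic arc in three steps, losing at most $\eta/3$ of length at each of the last two.

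\textbf{Step 1: polygonal approximation.} Since $\Gamma_0([0,1])$ is compact in $\Omega$, it has positive distance $r$ to $\partial\Omega$. Choose a fine partition $0=t_0<\dots<t_p=1$ with every $\Gamma_0([t_{i-1},t_i])$ of diameter less than $r/2$. The polygon $P$ with vertices $\Gamma_0(t_i)$ then lies in the $r/2$-neighborhood of $\Gamma_0$, hence in $\Omega$. Its length is at most $\operatorname{Length}(\Gamma_0)$, since chords are shorter than arcs. Next I would make $P$ embedded by shortcutting: whenever two edges meet at a non-consecutive point, delete the loop between the two visits. This only shortens $P$, and it terminates because there are finitely many edges. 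If needed, I would also perturb interior vertices slightly so that consecutive edges are not collinear-backtracking and nonadjacent edges are disjoint. The result is a simple polygon $P$ from $X^\initial$ to $X^\final$, contained in a compact set $K\Subset\Omega$, of length less than $d_\Omega+\eta/3$.

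\textbf{Step 2: smoothing and regularity.} Round each corner of $P$ with a small circular arc (or a $C^\infty$ fillet) inside a tiny disc around the vertex. This produces a $C^1$, indeed piecewise-analytic, regular embedded curve of length at most $\operatorname{Length}(P)$. After reparametrization, it is a $C^\infty$ regular embedded curve $\gamma:[0,1]\to K$.

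\textbf{Step 3: analytic approximation.} This is the main obstacle, because it must keep the endpoints, embeddedness, regularity and length control all at once. I would approximate $\gamma$ in $C^1([0,1])$ by a polynomial curve $q$ (Weierstrass applied to $\gamma'$, then integration), which is real-analytic on a neighborhood of $[0,1]$. The endpoints are then fixed by an affine correction: set
\[
\tilde\gamma(t)=q(t)+(1-t)\big(X^\initial-q(0)\big)+t\big(X^\final-q(1)\big).
\]
This correction is $C^1$-small, so $\tilde\gamma$ is still $C^1$-close to $\gamma$. From this, the required properties follow in turn.
\begin{itemize}
\item \emph{Regularity.} It follows from $\inf|\gamma'|>0$.
\item \emph{Containment in $\Omega$.} $\tilde\gamma$ stays within a small distance of $K$, hence in $\Omega$.
\item \emph{Length.} The length of $\tilde\gamma$ differs from that of $\gamma$ by at most $\|\tilde\gamma'-\gamma'\|_{L^\infty}$, which is less than $\eta/3$.
\item \emph{Embeddedness.} A $C^1$-small perturbation of a regular embedded compact arc remains embedded. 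Near the diagonal, injectivity follows from the uniform bound $|\tilde\gamma(t)-\tilde\gamma(s)|\ge c|t-s|$, which is inherited from $\gamma$ when $|t-s|$ is small. Away from the diagonal, it follows from the positive lower bound on $|\gamma(t)-\gamma(s)|$.
\end{itemize}
Finally, I would reparametrize $\tilde\gamma$ by arc length. This keeps analyticity, since $|\tilde\gamma'|$ is analytic and nonvanishing, so the inverse of the arc-length function is analytic. The result is the desired $\Gamma:[0,L]\to\Omega$ with $L<d_\Omega(X^\initial,X^\final)+\eta$.
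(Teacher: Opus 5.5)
Your proposal is correct and follows essentially the same route as the paper's proof. Both arguments use a polygonal approximation inside $\Omega$, extract an embedded polygonal arc, round the corners, apply Weierstrass approximation to the derivative with an endpoint correction, invoke $C^1$-openness of embeddings, and finish with an analytic arc-length reparametrization. The differences are cosmetic: you erase loops by shortcutting instead of taking a simple edge path in the planar graph obtained after a general-position perturbation, you fix the endpoints with an affine correction instead of a constant shift of the approximating derivative, and you verify the openness of embeddings directly instead of citing it.
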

\begin{proof}
    Choose $\gamma\in\mathcal C(X^\initial,X^\final)$ such that
    $$
        \operatorname{Length}(\gamma)
        <d_\Omega(X^\initial,X^\final)+\tfrac{\eta}{4}.
    $$
    Since $\gamma([0,1])$ is compactly contained in $\Omega$, a sufficiently fine polygonal approximation remains in $\Omega$ and has length arbitrarily close to that of $\gamma$. After an arbitrarily small perturbation of its interior vertices, it may be assumed to be in general position. Subdivide it at its finitely many intersection points. The resulting finite planar graph contains a simple edge path from $X^\initial$ to $X^\final$, whose realization is an embedded polygonal arc and whose length does not exceed that of the original polygonal path. Rounding its finitely many corners in pairwise disjoint balls compactly contained in $\Omega$, we obtain a regular embedded curve
    $$
        \gamma_0\in C^\infty([0,1];\Omega)
    $$
    with the same endpoints and
    $$
        \operatorname{Length}(\gamma_0)
        <d_\Omega(X^\initial,X^\final)+\tfrac{\eta}{2}.
    $$
    By the Weierstrass approximation theorem, choose real polynomials $P_n,Q_n$ such that
    $$
        (P_n,Q_n)\longrightarrow\dot\gamma_0
        \quad\text{uniformly on }[0,1].
    $$
    Set
    $$
        q_n(t):=(P_n(t),Q_n(t))
        +\gamma_0(1)-\gamma_0(0)
        -\int_0^1(P_n(t'),Q_n(t'))\di t'
    $$
    and
    $$
        \gamma_n(t):=\gamma_0(0)+\int_0^tq_n(t')\di t'.
    $$
    Then
    $$
        \gamma_n(0)=X^\initial,\qquad \gamma_n(1)=X^\final,
        \qquad \gamma_n\longrightarrow\gamma_0
        \quad\text{in }C^1([0,1]).
    $$
    Since the set of $C^1$ embeddings of the compact interval is open in the $C^1$ topology, for all sufficiently large $n$ the curve $\gamma_n$ is regular, embedded and contained in $\Omega$. Moreover,
    $$
        \left|\operatorname{Length}(\gamma_n)-\operatorname{Length}(\gamma_0)\right|
        \leq\int_0^1|\dot\gamma_n(t)-\dot\gamma_0(t)|\di t
        \longrightarrow0.
    $$
    Choose such an $n$ for which
    $$
        \operatorname{Length}(\gamma_n)
        <d_\Omega(X^\initial,X^\final)+\eta.
    $$
    Since $\gamma_n$ is polynomial and regular, its arc-length re-parametrization is real-analytic. This gives the required \mbox{arc $\Gamma$.} This completes the proof.
\end{proof}
There are two natural questions here:
\begin{itemize}
    \item Is the optimal controlled length $d_{\Sigma,\Omega}$ equal to the intrinsic Euclidean distance $d_\Omega$?
    \item Is the infimum~\eqref{eq:OptCtrlLen_Def} attained?
\end{itemize}
The answer to the first question is positive. Indeed, Theorem~\ref{thm:PassCtrl} not only provides an exact control, but also constructs the controlled trajectory arbitrarily close to any prescribed regular embedded real-analytic arc. By combining Lemma~\ref{lem:NearMinAnalyticArc} above and Theorem~\ref{thm:PassCtrl}, we obtain the following result.

\begin{theorem}\label{thm:OptCtrlLen}
    Let $\Omega\subset\mathbb R^2$ be a domain satisfying hypothesis~\eqref{hyp:Domain}. Let $\Sigma\subset\partial\Omega$ be a nonempty relatively open subset. Fix $s>0$, and let $X^\initial,X^\final\in\Omega$ with $X^\initial\neq X^\final$. Then
    \begin{equation}\label{eq:OptCtrlLen}
        d_{\Sigma,\Omega}(X^\initial,X^\final) = d_\Omega(X^\initial,X^\final).
    \end{equation}
    More precisely, for every $\varepsilon>0$, there exist $f_\varepsilon\in C_{c}^\infty(\Sigma)$ ($g_\varepsilon\in C_{c,0}^\infty(\Sigma)$ with~\eqref{eq:CtrlN_Comp} resp.) and $T_\varepsilon>0$ such that the solution to~\eqref{eq:KR}-\eqref{eq:CtrlD} (\eqref{eq:CtrlN} resp.) satisfies
    $$
        X(0)=X^\initial, \quad X(T_\varepsilon)=X^\final, \quad \text{and} \quad \int_0^{T_\varepsilon}|\dot X(t)|\di t < d_\Omega(X^\initial,X^\final)+\varepsilon.
    $$
\end{theorem}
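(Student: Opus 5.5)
The proof splits into two inequalities. The lower bound $d_{\Sigma,\Omega}\geq d_\Omega$ is immediate. For any admissible pair $(f,T)\in\mathcal A_{\Sigma,0}^s(X^\initial,X^\final)$, the trajectory $X$ solves the ODE~\eqref{eq:KR_gen}, whose right-hand side is smooth in $\Omega$. Hence $X\in C^1([0,T];\Omega)$, and after the linear rescaling $t\mapsto t/T$ it belongs to $\mathcal C(X^\initial,X^\final)$ with unchanged length. This gives $\int_0^T|\dot X|\di t\geq d_\Omega(X^\initial,X^\final)$. Taking the infimum over admissible pairs yields the inequality, in both the Dirichlet and Neumann formulations.

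For the upper bound, fix $\varepsilon>0$. Apply Lemma~\ref{lem:NearMinAnalyticArc} with $\eta=\varepsilon/2$ to obtain a regular embedded real-analytic arc $\Gamma:[0,L]\to\Omega$, parametrized by arc length, with $\Gamma(0)=X^\initial$, $\Gamma(L)=X^\final$ and $L<d_\Omega+\varepsilon/2$. Since $N=1$ and $X^\initial\neq X^\final$, the hypotheses of Theorem~\ref{thm:PassCtrl} hold; the disjointness condition is vacuous. That theorem provides, for all small $\varepsilon'>0$, a stationary control $f^{\varepsilon'}\in C_c^\infty(\Sigma)$ (resp. $g^{\varepsilon'}\in C^\infty_{c,0}(\Sigma)$) and a solution $X^{\varepsilon'}$ on $[0,\varepsilon']$ from $X^\initial$ to $X^\final$. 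Its reparametrization $\Gamma^{\varepsilon'}(s)=X^{\varepsilon'}(\varepsilon' s/L)$ satisfies $\|\Gamma-\Gamma^{\varepsilon'}\|_{C^1([0,L])}\to0$.

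Length is invariant under reparametrization, so
$$
    \int_0^{\varepsilon'}|\dot X^{\varepsilon'}(t)|\di t=\int_0^L|\dot\Gamma^{\varepsilon'}(s)|\di s\leq L+L\|\dot\Gamma-\dot\Gamma^{\varepsilon'}\|_{L^\infty}.
$$
Choosing $\varepsilon'$ small enough makes the right-hand side smaller than $d_\Omega+\varepsilon$. Setting $T_\varepsilon=\varepsilon'$ and $f_\varepsilon=f^{\varepsilon'}$ gives the explicit statement.

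It remains to check admissibility in $\mathcal A^s_{\Sigma,0}$. The support of $f_\varepsilon$ lies in $\Sigma$, and the trajectory stays in $\Omega=\mathcal C_{1,0}(\Omega)$. The delicate point is membership $f_\varepsilon\in H^s(\partial\Omega)$. For $0<s\leq1$ this holds because $f_\varepsilon$ is the restriction of an ambient smooth function, which lies in $H^1(\partial\Omega)$ on a Lipschitz boundary. For larger $s$ I would interpret $H^s(\partial\Omega)$ via such restrictions, or invoke extra boundary regularity, as in the remark following Theorem~\ref{thm-existence-optimal-N}. In the Neumann case, $g_\varepsilon\in C^\infty_{c,0}(\Sigma)\subset H^{s-1}(\partial\Omega)$ has zero mean as required. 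Since the infimum is then at most $d_\Omega+\varepsilon$ for every $\varepsilon$, equality~\eqref{eq:OptCtrlLen} follows.
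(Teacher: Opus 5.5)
Your proof is correct and follows the paper's argument: the paper obtains the theorem exactly by combining Lemma~\ref{lem:NearMinAnalyticArc} with the $C^k$ (in particular $C^1$) trajectory convergence of Theorem~\ref{thm:PassCtrl} for $N=1$. The lower bound $d_{\Sigma,\Omega}\geq d_\Omega$ is the trivial observation you spell out, and your caveat about $f_\varepsilon\in H^s(\partial\Omega)$ for $s>1$ on a merely Lipschitz boundary is the same point the paper leaves to its remark after Theorem~\ref{thm-existence-optimal-N} (it applies equally to the Neumann datum in $H^{s-1}(\partial\Omega)$ once $s>2$).
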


Thus the value of the unregularized one-vortex problem is determined solely by the intrinsic geometry of $\Omega$. The boundary patch, the vortex intensity, and the fixed circulation sector influence the controls realizing the approximating trajectories, but not the limiting value of their lengths.

\subsection{Non-attainment in convex domains}

Theorem \ref{thm:OptCtrlLen}  identifies the infimum in the definition of $d_{\Sigma,\Omega}$ but does not imply that it is attained. If the intrinsic distance is not attained for the chosen endpoints, then neither is the controlled infimum. This may happen when every minimizing sequence approaches an obstructing part of the boundary; non-convexity by itself is not sufficient to decide attainment. In a convex domain, equality between the length of a curve and the Euclidean distance between its endpoints forces the curve to traverse the connecting segment monotonically. A stationary one-vortex trajectory is a level curve of the Hamiltonian formed by the Robin function and a harmonic conjugate of the control potential. Recall the Hamiltonian formulation given in Remark~\ref{rmk:Hamilton} and notice that the circulation term does not appear since a convex domain must be simply connected. Hence, attainment would force a nontrivial segment to lie in such a level set.

In this part, we will show that, when the background potential belongs to $H^1(\Omega)$ (or with higher regularity), the controlled trajectory can never be a segment, even when the control is applied on the full boundary $\partial\Omega$. Let us begin with the simplest case where $\Omega= \xD$ is the unit disk.
\begin{example}\label{ex:ImpoTraj}
    Let $\Omega= \xD$, $X^\initial=-1/2$, and $X^\final = 1/2$ with intensity $\kappa=2$. Here we identify $\xR^2$ with the complex plane $\xC$ and regard the real interval as a subset of the real axis. Assume, to the contrary, that there exists a harmonic potential $\Phi\in H^1(\xD)$ realizing a controlled trajectory from $X^\initial=-1/2$, to $X^\final = 1/2$. Then, by denoting its harmonic conjugate as $\Psi\in H^1(\Omega)$, we have
    $$
        R_\xD(X) + \Psi(X) = C \quad \text{on the interval }[-1/2,1/2],
    $$
    for some constant $C\in\xR$, where
    $$
        R_\xD(X)=-\frac{1}{2\pi}\log(1-|X|^2).
    $$
    By real-analyticity, the same identity holds for $X\in]-1,1[$, and hence
    $$
        \Psi(X)=C+\frac{1}{2\pi}\log(1-X^2)
        =C-\frac{1}{2\pi}\sum_{j=1}^\infty\frac{X^{2j}}{j}.
    $$
    If $h$ denotes the boundary trace of $\Psi$, the Poisson formula gives, for $X$ on the real axis near zero,
    $$
        \Psi(X)=\frac{1}{2\pi}\int_{-\pi}^{\pi}h(\theta)\di\theta
        +\frac{1}{\pi}\sum_{n=1}^\infty X^n\int_{-\pi}^{\pi}h(\theta)\cos(n\theta)\di\theta.
    $$
    Consequently, after modifying the constant $C$, the even part of $h$ must be
    \begin{equation*}
        \frac{h(\theta)+h(-\theta)}{2}
        =C-\frac{1}{2\pi}\sum_{j=1}^\infty\frac{\cos(2j\theta)}{j}
        =C+\frac{1}{2\pi}\log(2|\sin\theta|).
    \end{equation*}
    Its Fourier coefficients show that this function belongs to $H^s(\partial\xD)$ if and only if $s<1/2$. Hence, no such $\Psi$ belongs to $H^1(\xD)$.
\end{example}

The same obstruction holds in every bounded convex domain with real-analytic boundary. The obstruction comes from the behavior at the endpoints of the maximal chord containing the segment. Real-analyticity propagates the level-set identity along the whole chord. The Robin function then has logarithmic growth at the two boundary endpoints, so the harmonic part would have to cancel this growth. Such behavior is incompatible with finite Dirichlet energy, whose point-evaluation estimate permits only square-root logarithmic growth along a nontangential approach.

\begin{theorem}\label{thm-regularity-obstruction-convex}
Let $\Omega\subset\xR^2$ be a bounded convex domain with real-analytic boundary. For every harmonic function $\Psi\in H^1(\Omega)$, no nontrivial segment contained in $\Omega$ is contained in a level set of
$$
    H(X) := \frac{\kappa}{2}R_{\Omega}(X) + \Psi(X).
$$
Consequently, let $X^\initial,X^\final\in\Omega$ be distinct and let $\kappa\neq0$. For every nonempty relatively open subset $\Sigma\subset\partial\Omega$ and every $s\geq\frac12$, no stationary Dirichlet control $f\in H^s(\partial\Omega)$ with $\Supp{f}\subset\overline\Sigma$, and no compatible stationary Neumann control $g\in H^{s-1}(\partial\Omega)$ with $\Supp{g}\subset\overline\Sigma$, can generate a trajectory from $X^\initial$ to $X^\final$ of length $|X^\initial-X^\final|$.
\end{theorem}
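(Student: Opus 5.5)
The argument follows the strategy sketched just before the statement. First we prove the level-set claim; then we deduce the control statement from it.

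\textbf{Level-set claim.} Suppose a nontrivial segment $I\subset\Omega$ lies in a level set of $H=\frac{\kappa}{2}R_\Omega+\Psi$. Both $R_\Omega$ and $\Psi$ are real-analytic in $\Omega$, so $t\mapsto H(P+te)$ is real-analytic on the maximal open chord $\{P+te\}\cap\Omega=\{P+te: a<t<b\}$. This function is constant on a subinterval, hence constant on the whole chord. Let $Q=P+be\in\partial\Omega$ be an endpoint of the chord. By convexity, the chord meets $\partial\Omega$ nontangentially at $Q$; the only exception is a chord lying in a supporting line, which is impossible because the open chord lies in $\Omega$.

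For a bounded domain with real-analytic (even $C^2$) boundary, the Robin function satisfies
\[
R_\Omega(X)=-\tfrac{1}{2\pi}\log\bigl(2\dist(X,\partial\Omega)\bigr)+O(1).
\]
This follows by comparison with interior and exterior tangent disks, or by reflection across the analytic boundary. Along the chord, $\dist(P+te,\partial\Omega)\sim c\,(b-t)$ with $c>0$ by nontangentiality. Hence
\[
\Psi(P+te)=C+\tfrac{\kappa}{4\pi}\log(b-t)+O(1)\quad\text{as }t\to b^-,
\]
so $|\Psi|$ grows like a full logarithm, since $\kappa\ne0$.

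The contradiction comes from the pointwise bound for harmonic functions with finite Dirichlet energy:
\[
|\Psi(X)-\Psi(X_0)|\le C\,\|\nabla\Psi\|_{L^2(\Omega)}\,\sqrt{\log\bigl(1/\dist(X,\partial\Omega)\bigr)}+C
\]
for $X$ in a nontangential cone at $Q$. To prove it, take a chain of Whitney disks $B_k$ of radius $\sim 2^{-k}$ along the chord. The mean value property gives $|\Psi_{B_{k+1}}-\Psi_{B_k}|\lesssim\|\nabla\Psi\|_{L^2(\widetilde B_k)}$, where the enlarged disks $\widetilde B_k$ have bounded overlap. Cauchy--Schwarz over $k\le K\sim\log(1/\dist)$ then gives $\sqrt K\,\|\nabla\Psi\|_{L^2}$. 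Since $\log\gg\sqrt{\log}$, this contradicts the logarithmic growth above.

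I expect the main obstacle to be the precise Robin asymptotic up to a bounded error along a nontangential approach for a general analytic convex domain. I would handle it with the disk-comparison monotonicity of the Green function, which bounds $R_\Omega$ between the Robin functions of an interior tangent disk and of a complementary exterior-disk domain, both of which have this asymptotic.

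\textbf{Consequence.} Take a control $f\in H^s$ with $s\ge\frac12$, or $g\in H^{s-1}$. Then $\Phi\in H^1(\Omega)$. A convex domain is simply connected, so $\Phi$ has a harmonic conjugate $\Psi\in H^1(\Omega)$; in the Neumann case the compatibility condition ensures this. By Remark~\ref{rmk:Hamilton} with $N=1$, the stationary dynamics conserve $\frac{\kappa^2}{2}R_\Omega+\kappa\Psi$. Dividing by $\kappa$, the function $\frac{\kappa}{2}R_\Omega+\Psi$ is constant along the trajectory.

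If the trajectory has length $|X^\initial-X^\final|$, it is the segment $[X^\initial,X^\final]$ traversed monotonically: equality in the triangle inequality for curves forces the image to be that segment. This segment is contained in $\Omega$ and in a level set of $H$, contradicting the first part.
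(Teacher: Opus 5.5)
Your proof is correct and has the same skeleton as the paper's. Both arguments continue the level-set identity analytically along the maximal chord and get transversality at the chord endpoint from convexity. Both then play the logarithmic blow-up of $R_\Omega$ there against the $\sqrt{\log}$ growth permitted by finite Dirichlet energy. Your deduction of the control statement, via the one-vortex Hamiltonian and the equality case of the length inequality, is also what the paper intends in the discussion preceding the theorem.

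The genuine difference is how you obtain the two analytic inputs.

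\textbf{The paper's route.} It gets both inputs from a conformal map $F:\Omega\to\xD$. The formula $R_\Omega=\frac{1}{2\pi}\log\frac{|F'|}{1-|F|^2}$, together with the conformal extension of $F$ across the analytic boundary, gives $R_\Omega=-\frac{1}{2\pi}\log\dist(\cdot,\partial\Omega)+O(1)$. The point-evaluation estimate for the Dirichlet space of the disk is cited and then transported by conformal invariance. This is short by citation, but it is exactly where the real-analytic (or at least smooth) boundary is used.

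\textbf{Your route.} You use the comparison principle for Green functions and a Whitney-chain/Cauchy--Schwarz estimate. This is self-contained and avoids conformal mapping entirely. It is also more robust than your write-up suggests:
\begin{itemize}
\item You only need the lower bound on $R_\Omega$. For a convex domain, the supporting half-plane at the endpoint $Q$ gives $R_\Omega(X)\ge-\frac{1}{2\pi}\log(2|X-Q|)$ directly, with no tangent disks and hence no $C^2$ assumption.
\item Convexity alone places the chord inside a cone with vertex $Q$, i.e.\ $\dist(X,\partial\Omega)\ge c|X-Q|$ along it. This is all the Whitney chain requires.
\end{itemize}
So your argument in fact proves the level-set claim for every bounded convex domain.
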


\begin{figure}[htbp]
\centering
\begin{tikzpicture}[scale=1.1, font=\small, >=stealth]
    \draw[thick, fill=blue!3] (0,0) ellipse (4 and 1.8);
    \node at (2.6,1.0) {$\Omega$};
    \node at (3.9,1.0) {$\partial\Omega$};
    \coordinate (pminus) at ({-4*sqrt(1-(0.35/1.8)^2)},0.35);
    \coordinate (pplus) at ({4*sqrt(1-(0.35/1.8)^2)},0.35);
    \coordinate (initial) at (-1.55,0.35);
    \coordinate (final) at (1.25,0.35);

    \draw[gray, densely dotted] (-4.55,0.35) -- (4.55,0.35);
    \draw[blue!65!black, line width=1.5pt] (initial) -- (final);
    \draw[->, orange!85!black, dashed, thick] (initial) -- (pminus);
    \draw[->, orange!85!black, dashed, thick] (final) -- (pplus);
    \fill (initial) circle (2pt) node[below=4pt] {$X^\initial$};
    \fill (final) circle (2pt) node[below=4pt] {$X^\final$};
    \draw[fill=white] (pminus) circle (2.2pt) node[below left] {$p_-$};
    \draw[fill=white] (pplus) circle (2.2pt) node[below right] {$p_+$};
    \node[blue!65!black, align=center] at (-0.15,1.1)
        {hypothetical level segment\\$H=C$};
    \node at (0,-0.55) {$\ell(s)=p_-+se,\quad 0<s<L$};
    \node[orange!85!black] at (0,-1.15)
        {analytic continuation along the chord};
\end{tikzpicture}
\caption{Hypothetical segment level set in
Theorem~\ref{thm-regularity-obstruction-convex}. If $H$
were constant on $[X^\initial,X^\final]$, real-analyticity would extend
this identity to the open maximal chord $(p_-,p_+)$.}
\label{fig:convex-level-segment}
\end{figure}

\begin{proof}
    Suppose, to the contrary, that $H$ is constant on a nontrivial segment $[X^\initial,X^\final]\subset\Omega$.
    Denote by $p_-,p_+\in\partial\Omega$ the endpoints of the maximal chord contained in the line through $X^\initial$ and $X^\final$, and set
    $$
        L:=|p_+-p_-|,\qquad e:=\frac{p_+-p_-}{L}.
    $$
    We parametrize the chord by $\ell(s)=p_-+se$, $0<s<L$. The real-analytic function $s\mapsto H(\ell(s))$ is constant on the subinterval corresponding to $[X^\initial,X^\final]$. By the identity principle, it is constant for all $0<s<L$. Consequently,
    \begin{equation}\label{eq-H-constant-maximal-chord}
        \Psi(\ell(s)) = C - \frac{\kappa}{2}R_\Omega(\ell(s))
    \end{equation}
    holds for some constant $C\in\xR$.
    We next recall the boundary behavior of the Robin function. Let $F:\Omega\to\mathbb D$ be a conformal map. By conformal invariance of the Dirichlet Green function,
    $$
        G_\Omega(z,\zeta) = G_{\mathbb D}(F(z),F(\zeta)).
    $$
    With the convention used throughout this paper,
    $$
        G_{\mathbb D}(w,\eta) = \frac{1}{2\pi} \log\left| \frac{w-\eta}{1-\overline{\eta}w} \right|,
    $$
    one obtains
    \[
        R_\Omega(z)
        =
        \frac{1}{2\pi}
        \log\left(
        \frac{|F'(z)|}{1-|F(z)|^2}
        \right).
    \]
    The real-analyticity of $\partial\Omega$ implies that $F$ extends conformally across the boundary. It follows that
    \[
        R_\Omega(z)
        =
        -\frac{1}{2\pi}
        \log \dist(z,\partial\Omega)+O(1),
        \qquad
        z\to\partial\Omega.
    \]
    Since $\Omega$ is convex, the line containing the maximal chord intersects $\partial\Omega$ transversally at $p_-$ and $p_+$. Indeed, if the chord direction were tangent to $\partial\Omega$ at either endpoint, the corresponding tangent line would be a supporting line of the convex set $\overline\Omega$. This is impossible because the same line contains the interior points of the chord. The nonzero angle of intersection and the $C^1$ regularity of the boundary therefore imply
    \[
        \dist(X,\partial\Omega)
        \asymp
        |X-p_\pm|
    \]
    as $X\to p_\pm$ along the chord. Therefore, the following asymptotic formulas hold along the chord,
    \[
        R_\Omega(X)
        =
        -\frac{1}{2\pi}\log|X-p_\pm|+O(1).
    \]
    Together with \eqref{eq-H-constant-maximal-chord}, this gives
    \begin{equation}
    \label{eq-log-growth-optimal}
        \Psi(X)
        =
        \frac{\kappa}{4\pi}\log|X-p_\pm|+O(1),
        \qquad
        X\to p_\pm,
    \end{equation}
    along the maximal chord.
    On the other hand, recall that if {$\Phi$} is harmonic in the unit disc $\mathbb D$ and
    \[
    \int_{\mathbb D}|\nabla \Phi|^2\,dx<\infty,
    \]
    then
    \begin{equation*}
        |\Phi(z)| \lesssim_\Phi 1+ \sqrt{\log\tfrac{1}{1-|z|}} , \qquad z\in\mathbb D.
    \end{equation*}
    This is the classical point-evaluation estimate for the Dirichlet
    space; see, for instance, \cite[Chapter~1]{ElFallah}. We also refer to
    \cite[Chapters~3 and~6]{ElFallah} for the boundary behavior and conformal invariance of the Dirichlet space. By conformal invariance of the Dirichlet integral, the same estimate can be transferred to any bounded simply connected domain with smooth boundary. More precisely, for every $p\in\partial\Omega$, $\Psi\in H^1(\Omega)$ implies that
    \begin{equation}\label{eq-dirichlet-pointwise-domain}
        |\Psi(X)| \lesssim_\Psi 1+ \sqrt{\log\tfrac{1}{|X-p|}}, \quad \text{as $X\to p$ nontangentially.}
    \end{equation}
    Indeed, when $F:\Omega\to\mathbb D$ is a conformal map and $\zeta=F(p)$, the Dirichlet integral is conformally invariant, while the smooth extension of $F$ to the boundary gives
    \[
        1-|F(X)| \asymp |X-p|
    \]
    along every nontangential approach to $p$. The maximal chord gives such an approach. However, this estimate~\eqref{eq-dirichlet-pointwise-domain} is incompatible with the logarithmic behavior~\eqref{eq-log-growth-optimal}. This completes the proof of the desired result.
\end{proof}

\appendix

\section{Technical results}\label{app:Tech}

In this appendix, we collect the technical results frequently used in this paper.

\begin{proposition}[Regularity of harmonic extension]\label{prop:RegHarExt}
    Let $\Omega\subset\xR^2$ be a bounded connected domain with Lipschitz boundary and $s>0$. The trace operator $\Tr:H^{s+1/2}(\Omega)\to H^{s}(\partial\Omega)$ is continuous and surjective. For every $f\in H^{s}(\partial\Omega)$, the variational Dirichlet problem
    $$
        \left\{\begin{aligned}
            & \Delta \Phi = 0, &&\text{in $\Omega$}, \\
            & \Tr \Phi = f, &&
        \end{aligned}\right.
    $$
    has a unique solution in $H^{s+1/2}(\Omega)$ and
    $$
        \|\Phi\|_{H^{s+1/2}(\Omega)}\lesssim\|f\|_{H^{s}(\partial\Omega)}.
    $$
\end{proposition}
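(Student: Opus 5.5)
The plan is to split the statement into its two assertions, trace mapping and solvability of the Dirichlet problem, and to handle them by lifting the boundary datum and solving an auxiliary problem with homogeneous boundary condition. Before starting I fix what $H^{s}(\partial\Omega)$ means on a Lipschitz boundary. For $0<s<1$ it is the intrinsic Sobolev space defined through the bi-Lipschitz boundary charts. For $s\geq1$ there is no chart-invariant intrinsic definition, so I adopt the convention of Remark~\ref{rmk:Distribution}: $H^{s}(\partial\Omega)$ is the image $\Tr H^{s+1/2}(\Omega)$, equipped with the quotient norm. Under this convention, continuity and surjectivity of $\Tr$ hold by construction when $s\geq1$.

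\textbf{Step 1 (trace theorem, $0<s<1$).} In this range the exponent $s+1/2$ lies in $]1/2,3/2[$. This is exactly the window in which the Gagliardo trace theorem on Lipschitz domains holds: $\Tr:H^{s+1/2}(\Omega)\to H^{s}(\partial\Omega)$ is continuous and has a bounded right inverse $E$ (Costabel, Jerison--Kenig). I will quote this result rather than reprove it. The proof rests on flattening the boundary locally by a bi-Lipschitz graph map, which preserves $H^{t}$ for $|t|\leq1$ and hence covers both $H^{s+1/2}(\Omega)$ with $s+1/2<3/2$ and $H^{s}(\partial\Omega)$ with $s<1$, and then patching with a partition of unity.

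\textbf{Step 2 (reduction to a homogeneous problem).} Given $f$, set $F:=Ef\in H^{s+1/2}(\Omega)$. Then $\Delta F\in H^{s-3/2}(\Omega)$, with $H^{s-3/2}(\Omega)\subset H^{-1}(\Omega)$ as long as $s\geq1/2$. Let $w\in H^1_0(\Omega)$ solve $\Delta w=\Delta F$ by Lax--Milgram, and put $\Phi:=F-w$. Then $\Phi$ is harmonic with $\Tr\Phi=f$. Uniqueness follows because the difference of two solutions is a harmonic function with zero trace, and the maximum principle on Lipschitz domains (or the energy identity in $H^1_0$ when $s\geq1/2$) forces it to vanish. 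For $0<s<1/2$ the energy space is not available. In that range I would instead use the nontangential-maximal-function theory of Dahlberg and Jerison--Kenig, as in \cite{dahlberg1977harmonic}: it gives existence and uniqueness directly, and it identifies the solution with the Poisson integral used in Step~1 of the proof of Proposition~\ref{prop:HarFct}.

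\textbf{Step 3 (the estimate, the main obstacle).} What remains is $w\in H^{s+1/2}(\Omega)$ with $\|w\|_{H^{s+1/2}}\lesssim\|\Delta F\|_{H^{s-3/2}}$. This is exactly the Jerison--Kenig regularity theorem for the Dirichlet Laplacian on Lipschitz domains. It holds precisely for $s+1/2<3/2$, which is why Steps~1--2 cover $0<s<1$ with the estimate $\|\Phi\|_{H^{s+1/2}}\lesssim\|f\|_{H^{s}(\partial\Omega)}$. I expect the range $s\geq1$ to be the real obstruction, and I would state it openly in the write-up. On a genuinely Lipschitz domain (for instance a re-entrant corner), the conclusion $\Phi\in H^{s+1/2}$ fails for general data once $s+1/2\geq3/2$. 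Under the quotient-space convention above, what one can still prove is the following weaker statement. For each $f$ there is some $F\in H^{s+1/2}(\Omega)$ with trace $f$, but the harmonic solution $\Phi$ itself belongs to $H^{s+1/2}(\Omega)$, with the stated bound, only when the boundary is smooth enough that full elliptic regularity is available; for $C^\infty$ boundaries this follows from the standard $H^{k}$ shift theorem together with interpolation. The conclusion actually used in the paper, namely the compactness of $f\mapsto\nabla^k\Phi|_{K_\delta}$ in Theorem~\ref{thm-existence-optimal-N}, only needs the $0<s<1$ case. The reason is that $H^{s}(\partial\Omega)$ embeds continuously into $H^{\min(s,1/2)}(\partial\Omega)$, after which interior estimates and Rellich's theorem apply. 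I would therefore prove the proposition fully for $0<s<1$ (and for every $s>0$ when $\partial\Omega$ is smooth) and, for $s\geq1$ on Lipschitz domains, record this reduction rather than claim more.
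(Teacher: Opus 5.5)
The paper gives no proof of this proposition. It is stated in the appendix as a standard fact, with no argument or reference, so there is no authors' proof to compare yours with. Your route is the standard one: a bounded right inverse of the trace, then the Jerison--Kenig isomorphism for the Dirichlet Laplacian.

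You are right not to claim the statement in the generality written. At a re-entrant corner of opening $\omega\in(\pi,2\pi)$, the harmonic extension of the restriction of a generic smooth ambient function contains $c\,r^{\pi/\omega}\sin(\pi\theta/\omega)$ with $c\neq0$. This term is not in $H^{1+\pi/\omega}(\Omega)$. So the conclusion fails for large $s$ under any definition of $H^s(\partial\Omega)$ that contains such restrictions. This is in line with the authors' remark after Theorem~\ref{thm-existence-optimal-N} that higher indices need extra boundary regularity. Your observation that the compactness used there only needs small $s$ is correct.

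Three points in the part you do prove need fixing.

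First, the parenthetical sketch in Step~1 is wrong for $\tfrac12<s<1$. Bi-Lipschitz changes of variables preserve $H^t$ only for $|t|\le1$, so they do not act on $H^{s+1/2}(\Omega)$ when $s+\tfrac12\in(1,\tfrac32)$. The trace theorem is still true in that range (Costabel; Ding), but its proof estimates the Slobodeckij seminorm of the trace directly rather than by flattening. Since you only quote the result, drop the sketch.

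Second, your separate treatment of $0<s<\tfrac12$ is unnecessary and, as written, incomplete. The maximum principle is not directly available for a function known only to lie in $H^{s+1/2}(\Omega)$, with $s+\tfrac12<1$, and to have zero trace. Dahlberg's uniqueness holds in the class $N(u)\in L^2(\partial\Omega)$, so using it for uniqueness in $H^{s+1/2}(\Omega)$ needs an extra result linking the two classes, which you do not supply.

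The theorem you already invoke in Step~3 settles everything at once. For every $t\in(\tfrac12,\tfrac32)$, $\Delta$ is an isomorphism from $\{w\in H^t(\Omega):\Tr w=0\}$ onto $H^{t-2}(\Omega)$. Take $t=s+\tfrac12$ and $F=Ef$. This gives $w$ with $\Delta w=\Delta F$ and $\|w\|_{H^{s+1/2}(\Omega)}\lesssim\|\Delta F\|_{H^{s-3/2}(\Omega)}\lesssim\|f\|_{H^s(\partial\Omega)}$. It also gives uniqueness of $\Phi$, since the difference of two solutions is a harmonic element of that kernel. No Lax--Milgram or nontangential step is needed for any $0<s<1$.

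Third, the threshold of your counterexample is off at the endpoint. Since $\pi/\omega>\tfrac12$, a single re-entrant corner only breaks the claim for $s\ge\tfrac12+\tfrac{\pi}{\omega}>1$. Letting $\omega\uparrow2\pi$ gives, for every $s>1$, a polygon on which the claim fails, but nothing at $s=1$.

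In fact, at $s=1$ with the intrinsic space $H^1(\partial\Omega)$, the solution of the $L^2$ regularity problem (Jerison--Kenig, Verchota) has $N(\nabla\Phi)\in L^2(\partial\Omega)$. Via Dahlberg's square-function estimate this gives $\Phi\in H^{3/2}(\Omega)$ with $\|\Phi\|_{H^{3/2}(\Omega)}\lesssim\|f\|_{H^1(\partial\Omega)}$ on every Lipschitz domain. So ``fails once $s+\tfrac12\ge\tfrac32$'' should read ``$s+\tfrac12>\tfrac32$''. What is genuinely limited to $s<1$ is the standard trace theorem, not the solvability estimate.
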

\begin{remark}
    For the Neumann boundary condition, write
    $H^{-1/2}_\diamond(\partial\Omega):=\{g\in H^{-1/2}(\partial\Omega):\langle g,1\rangle=0\}$.
    The Dirichlet-to-Neumann operator in~\eqref{eq:DtN} induces a bounded isomorphism
    \begin{equation*}
        \mathcal N:H^{\sigma+1/2}(\partial\Omega)/\xR\longrightarrow H^{\sigma-1/2}_\diamond(\partial\Omega), \quad \forall \sigma\ge 0.
    \end{equation*}
    By duality, this can be extended to the case with $\sigma \ge -1/2$. Therefore the result of the proposition above can be stated for Neumann boundary condition.
\end{remark}
\begin{remark}
    We also emphasize the strong interior regularizing effect of the Laplace equation. Although the regularity of $\Phi$ up to $\partial\Omega$ is determined by that of the boundary datum $f$, harmonicity gives $\Phi\in C^\omega(\Omega)$. In addition, for every compact set $K\Subset\Omega$ and every $k\in\xN$, interior elliptic estimates yield
    $$
        \| \Phi \|_{C^k(K)} \lesssim_{K,k} \| f \|_{H^{s}(\partial\Omega)}.
    $$
    Moreover, the linear mapping
    $$
        H^s(\partial\Omega) \to C^k(K), \quad f \to \Phi
    $$
    is compact.
\end{remark}

\hspace{1em}

\begin{proposition}[Unique continuation]\label{prop:UniContin}
    Let $\Omega\subset\xR^2$ be a domain with Lipschitz boundary. Let $\Sigma\subset\partial\Omega$ be a nonempty relatively open subset of the boundary. If $U\in H^1(\Omega)$ is harmonic inside $\Omega$ such that
    \begin{equation}\label{eq-UniContin:ZeroCond}
        U = n\cdot\nabla U = 0, \quad \text{on }\Sigma,
    \end{equation}
    where $n$ denotes the normal direction and the second equality means that the weak conormal derivative pairs to zero with every function in $C_c^\infty(\Sigma)$, then $U$ is identically zero on $\Omega$.
\end{proposition}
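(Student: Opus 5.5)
The plan is a zero-extension argument: extend $U$ by zero across $\Sigma$, show that the extension is harmonic in a neighbourhood of a boundary point of $\Sigma$, and then propagate the vanishing through $\Omega$ by interior unique continuation.

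\textbf{Step 1: choose a local patch.} Fix a point $p\in\Sigma$. Because $\partial\Omega$ is Lipschitz, there is a small open disc $B$ centred at $p$ with two properties: $B\cap\partial\Omega\subset\Sigma$, and $B\cap\Omega$ is the region above a Lipschitz graph in suitable coordinates. In particular $B\cap\Omega$ and $B\setminus\overline\Omega$ are both nonempty and connected.

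\textbf{Step 2: the zero extension is harmonic in $B$.} Define $\tilde U=U$ on $B\cap\Omega$ and $\tilde U=0$ on $B\setminus\overline\Omega$.
\begin{enumerate}
    \item The Dirichlet trace of $U$ vanishes on $B\cap\partial\Omega$. Hence, by the standard gluing criterion for $H^1$ functions across a Lipschitz interface, $\tilde U\in H^1(B)$.
    \item Let $\varphi\in C_c^\infty(B)$. Since $U$ is harmonic in $\Omega$, Green's formula on $B\cap\Omega$ gives
    $$
        \int_B\nabla\tilde U\cdot\nabla\varphi\,dx=\int_{B\cap\Omega}\nabla U\cdot\nabla\varphi\,dx=\big\langle n\cdot\nabla U,\varphi|_{\partial\Omega}\big\rangle .
    $$
    \item The restriction $\varphi|_{\partial\Omega}$ is the trace of an ambient test function with support compactly inside $\Sigma$, so it lies in $C_c^\infty(\Sigma)$ in the sense of Remark~\ref{rmk:Distribution}. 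The weak conormal hypothesis then makes the right-hand side vanish.
\end{enumerate}
Thus $\tilde U$ is weakly harmonic in $B$, and by Weyl's lemma it is real-analytic there.

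\textbf{Step 3: propagate the vanishing.} The real-analytic function $\tilde U$ vanishes on the nonempty open set $B\setminus\overline\Omega$ and $B$ is connected, so $\tilde U\equiv0$ in $B$. In particular $U\equiv0$ on the nonempty open set $B\cap\Omega$. Since $\Omega$ is connected (it is a domain) and $U$ is real-analytic in $\Omega$, the identity principle gives $U\equiv0$ on all of $\Omega$.

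\textbf{Main obstacle.} The delicate step is Step 2: we must justify that the zero extension lies in $H^1$ and that the boundary pairing is exactly the weak conormal derivative tested against $C_c^\infty(\Sigma)$, with only Lipschitz regularity available. I would handle this through the variational definition of the conormal derivative, namely
$$
    \langle n\cdot\nabla U,\psi|_{\partial\Omega}\rangle:=\int_\Omega\nabla U\cdot\nabla\psi\,dx\qquad\text{for harmonic } U\in H^1(\Omega).
$$
This definition makes Step 2(ii) an identity rather than an integration by parts. The $H^1$ gluing in Step 2(i) then follows from the fact that $H^1_0$-type traces vanish on the interface, checked in local Lipschitz graph coordinates.
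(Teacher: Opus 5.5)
Your proposal is correct and follows the paper's proof essentially step for step: you extend $U$ by zero across $\Sigma$ in a boundary ball, get $H^1$ gluing from the zero trace, use the vanishing weak conormal derivative to show the extension is weakly harmonic in $B$, and then apply unique continuation first in $B$ and then in $\Omega$. One small correction: your claim that $B\cap\Omega$ is connected is never used and can fail for a round disc over a general Lipschitz graph; the argument only needs $B$ connected and $B\setminus\overline\Omega$ nonempty.
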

\begin{proof}
    Let $x_0\in\Sigma$ and choose a ball $B$ in a Lipschitz boundary chart, centered at $x_0$, such that $B\cap\partial\Omega\subset\Sigma$. Define $\widetilde U=U$ on $B\cap\Omega$ and $\widetilde U=0$ on $B\backslash\overline\Omega$. The zero trace implies that $\widetilde U\in H^1(B)$. For every $\varphi\in C_c^\infty(B)$, Green's formula and the zero weak conormal derivative give
    $$
        \int_B\nabla\widetilde U\cdot\nabla\varphi\di x
        =\int_{B\cap\Omega}\nabla U\cdot\nabla\varphi\di x
        =\left\langle n\cdot\nabla U,\varphi|_{\partial\Omega}\right\rangle=0.
    $$
    Hence
    $$
        \Delta\widetilde U=0\quad\text{in }B
    $$
    in the sense of distributions. Since $\widetilde U$ vanishes in the nonempty open set $B\backslash\overline\Omega$, interior unique continuation gives $U=0$ in $B\cap\Omega$. Interior unique continuation in the connected domain $\Omega$ then yields $U\equiv0$.
\end{proof}
Stronger boundary unique-continuation results under positive-measure hypotheses are discussed in~\cite{vita2026unique,lin1991nodal,xavier2023unique}.

\hspace{1em}

\begin{lemma}[Approximation of level sets]\label{lem:EqtoCurve}
    Consider a domain $\Omega\subset\xR^2$ and a real-valued smooth function $H$ on $\Omega$. Let $\Gamma:[0,L]\to\Omega$ be a smooth regular embedded arc parametrized by arc length, and set $X^\initial:=\Gamma(0)$ and $X^\final:=\Gamma(L)$. Assume that
    $$
        \Gamma([0,L])\subset\{H=H(X^\initial)\}
    $$
    and that $\nabla H$ does not vanish on this arc.
    If there exist smooth functions $H^\epsilon:\Omega\to\xR$ with parameter $\epsilon>0$ such that
    \begin{itemize}
        \item for all $\epsilon>0$, $H^\epsilon(X^\initial)=H^\epsilon(X^\final)$;
        \item for all $k\in\xN$ and every compact subset $K\Subset \Omega$
        $$
            \| H - H^\epsilon \|_{C^k(K)} \to 0, \quad \text{as }\epsilon\to 0;
        $$
    \end{itemize}
    Then, for all $\epsilon>0$ small enough, the level set $\{ H^\epsilon = H^\epsilon(X^\initial) \}$ contains a curve $\Gamma^\epsilon:[0,L]\to\Omega$ from $X^\initial$ to $X^\final$ such that, for all $k\in\xN$,
    $$
        \| \Gamma - \Gamma^\epsilon \|_{C^k([0,L])} \to 0, \quad \text{as }\epsilon \to 0.
    $$
    The parameter of $\Gamma^\epsilon$ need not be its arc-length parameter.
\end{lemma}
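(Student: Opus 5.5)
The plan is to use the implicit function theorem in a tubular neighborhood of the arc: normal coordinates turn the level-set equation for $H^\epsilon$ into a graph equation for a small normal displacement. Since $\Gamma$ is a regular level arc of $H$ with $\nabla H\neq0$, the gradient is normal to $\Gamma$. Extend $\Gamma$ smoothly a little beyond its endpoints, keeping it embedded and inside $\Omega$, say $\Gamma:[-\delta,L+\delta]\to\Omega$, and let $\nu(s)$ be a smooth unit normal field. The map
$$
(s,r)\mapsto \Gamma(s)+r\nu(s)
$$
is a diffeomorphism from $[-\delta,L+\delta]\times[-r_0,r_0]$ onto a compact neighborhood $K\Subset\Omega$ for $r_0$ small. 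Set
$$
F^\epsilon(s,r):=H^\epsilon(\Gamma(s)+r\nu(s))-H^\epsilon(X^\initial),\qquad F(s,r):=H(\Gamma(s)+r\nu(s))-H(X^\initial).
$$
Then $F(s,0)=0$ for $s\in[0,L]$, and after shrinking $\delta$ we have $\partial_rF(s,0)=\nabla H(\Gamma(s))\cdot\nu(s)\neq0$. Because the $C^k(K)$ convergence is assumed for every $k$, $F^\epsilon\to F$ in every $C^k$ norm on the rectangle.

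The key step is a uniform implicit function theorem. For $\epsilon$ small, I will find a unique smooth $r^\epsilon(s)$ on $[-\delta/2,L+\delta/2]$ with $|r^\epsilon|\le r_1$ and $F^\epsilon(s,r^\epsilon(s))=0$, and show that $r^\epsilon\to0$ in $C^k$. Concretely, $|\partial_rF|\ge c>0$ on a thin strip $|r|\le r_1$ and, for $\epsilon$ small, the same bound holds for $F^\epsilon$. Moreover $|F^\epsilon(s,0)|\to0$, so by monotonicity in $r$ there is a unique zero $r^\epsilon(s)$, with $|r^\epsilon(s)|\le C\sup|F^\epsilon(\cdot,0)|\to0$. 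The derivatives come from differentiating the identity $F^\epsilon(s,r^\epsilon(s))=0$: each $\partial_s^kr^\epsilon$ is a polynomial in derivatives of $F^\epsilon$ divided by powers of $\partial_rF^\epsilon$, and these converge to the corresponding expressions for $F$ at $r=0$, which vanish. Induction on $k$ then gives $\|r^\epsilon\|_{C^k}\to0$.

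The main obstacle is the endpoints. The curve $\tilde\Gamma^\epsilon(s)=\Gamma(s)+r^\epsilon(s)\nu(s)$ passes near $X^\initial$ and $X^\final$ but not necessarily through them. Here the hypotheses $H^\epsilon(X^\initial)=H^\epsilon(X^\final)$ and the normalization of $F^\epsilon$ do the work.
\begin{itemize}
\item Since $X^\initial=\Gamma(0)$ lies in the tube and $F^\epsilon(0,0)=0$, uniqueness of the zero on the normal fiber gives $r^\epsilon(0)=0$, so $\tilde\Gamma^\epsilon(0)=X^\initial$.
\item Since $F^\epsilon(L,0)=H^\epsilon(X^\final)-H^\epsilon(X^\initial)=0$, the same uniqueness gives $r^\epsilon(L)=0$.
\end{itemize}
Hence $\Gamma^\epsilon(s):=\Gamma(s)+r^\epsilon(s)\nu(s)$ for $s\in[0,L]$ lies in the level set $\{H^\epsilon=H^\epsilon(X^\initial)\}$ and joins $X^\initial$ to $X^\final$. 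Finally, $\|\Gamma-\Gamma^\epsilon\|_{C^k}\le C_k\|r^\epsilon\|_{C^k}\|\nu\|_{C^k}\to0$, and $\Gamma^\epsilon$ is parametrized by $s$ rather than by its own arc length, consistent with the statement.
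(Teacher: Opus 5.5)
Your proposal is correct and follows essentially the same route as the paper: tubular coordinates around the arc, a uniform lower bound on the normal derivative of $H^\epsilon$ giving a unique small root $r^\epsilon(s)$ on each normal fiber, the endpoint identities $r^\epsilon(0)=r^\epsilon(L)=0$ from uniqueness of that root, and inductive differentiation of the implicit identity to get $C^k$ convergence. The only difference is your extension of $\Gamma$ beyond its endpoints, which the paper does not use; it is unnecessary, and since $F(s,0)$ need not vanish on the extended part, your bound $|r^\epsilon|\le C\sup|F^\epsilon(\cdot,0)|\to0$ should be read with the supremum over $[0,L]$ only.
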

\begin{proof}
    Let $n$ denote a smooth unit normal along $\Gamma$, and introduce tubular coordinates near the curve,
    \begin{equation}\label{eq:LocCoor}
        \Xi: (s,\iota)\in[0,L]\times [-\delta,\delta] \mapsto \Gamma(s) + \iota n(s) \in \Omega
    \end{equation}
    where $\delta>0$ is sufficiently small. Since $H$ is constant on $\Gamma$ and $\nabla H\neq0$ there, $\nabla H(\Gamma(s))$ is normal to the arc. After choosing the sign of $n$, compactness gives
    $$
        \partial_\iota(H\circ\Xi)(s,0)=\nabla H(\Gamma(s))\cdot n(s)\geq2c
    $$
    for some $c>0$ and every $s\in[0,L]$. Shrinking $\delta$ and using the $C^1$ convergence of $H^\epsilon$ to $H$, we obtain
    $$
        \partial_\iota(H^\epsilon\circ\Xi)(s,\iota)\geq c
    $$
    throughout the fixed tube, uniformly for all sufficiently small $\epsilon$. Moreover, $H^\epsilon(\Gamma(s))-H^\epsilon(X^\initial)$ converges uniformly to zero. For small enough $\epsilon$, its absolute value is less than $c\delta/2$. Integrating the preceding lower bound from $0$ to $\pm\delta$ shows that
    $$
        H^\epsilon\circ\Xi(s,-\delta)<H^\epsilon(X^\initial)
        <H^\epsilon\circ\Xi(s,\delta)
    $$
    for every $s\in[0,L]$. Hence the intermediate value theorem and strict monotonicity give a unique small function $h^\epsilon$ such that the level set is represented by $\iota=h^\epsilon(s)$, where $h^\epsilon(s)$ is the unique solution of
    $$
        H^\epsilon\circ\Xi(s,\cdot) = H^\epsilon(X^\initial) \quad \text{on }[-\delta,\delta]
    $$
    for each fixed $s\in[0,L]$. The implicit function theorem shows that $h^\epsilon$ is smooth. The endpoint identity in the assumptions and uniqueness of the small root give $h^\epsilon(0)=h^\epsilon(L)=0$.
    From the identity
    \begin{align*}
        \int_{0}^{h^\epsilon(s)}\partial_\iota(H^\epsilon\circ\Xi)(s,\iota')\di\iota'
        &=H^\epsilon\circ\Xi(s,h^\epsilon(s))-H^\epsilon\circ\Xi(s,0)\\
        &=H^\epsilon(X^\initial)-H^\epsilon(\Gamma(s)),
    \end{align*}
    we have
    \begin{align*}
        c|h^\epsilon(s)|
        &\lesssim |H^\epsilon(X^\initial)-H^\epsilon(\Gamma(s))|\\
        &\leq |H^\epsilon(X^\initial)-H(X^\initial)|
        +|H(\Gamma(s))-H^\epsilon(\Gamma(s))|
    \end{align*}
    uniformly in $\epsilon$, where the right-hand side tends to zero as $\epsilon\to0$. Differentiating the implicit identity and arguing inductively gives $h^\epsilon\to0$ in every $C^k([0,L])$. Therefore $\Gamma^\epsilon(s):=\Xi(s,h^\epsilon(s))$ has the required endpoints and convergence. It is regular and embedded for all sufficiently small $\epsilon$ by $C^1$ closeness to $\Gamma$.
\end{proof}

\printbibliography[heading=bibliography,title=References]

\end{document}